\newcommand{\gendis}{\kappa}
\newcommand{\dsp}{\displaystyle}
\newcommand{\bd}{\begin{displaymath}}
\newcommand{\be}{\begin{equation}}
\newcommand{\ba}{\begin{array}}
\newcommand{\ed}{\end{displaymath}}
\newcommand{\ee}{\end{equation}}
\newcommand{\ea}{\end{array}}
\newcommand{\espace}{\mbox{ }}
\newcommand{\Prob}{{\rm I\hspace{-0.8mm}P}}
\newcommand{\Exp}{{\rm I\hspace{-0.8mm}E}}
\newcommand{\indicator}[1]{{\mbox{\large\bf$1$}}_{#1}}
\newcommand{\sgn}{\mbox{\rm sgn}\,}
\def\N{\mathbb{N}}
\def\Z{\mathbb{Z}}
\def\R{\mathbb{R}}
\newcommand{\eqref}[1]{(\ref{#1})}
\newtheorem{example}{Example}[section]
\newtheorem{theorem}{Theorem}[section]
\newtheorem{assumption}{Assumption}[section]
\newtheorem{proposition}{Proposition}[section]
\newtheorem{definition}{Definition}[section]
\newtheorem{lemma}{Lemma}[section]
\newtheorem{corollary}{Corollary}[section]
\newtheorem{remark}{Remark}[section]
\newenvironment{proof}[2]{\espace\\{\em Proof of #1 \ref{#2}.}}{\hfill\mbox{$\square$}}
\begin{document}
\title{Quenched convergence and strong local equilibrium for asymmetric zero-range process 
with site disorder}
\author{C. Bahadoran$^{a,e}$, T. Mountford$^{b,e}$, K. Ravishankar$^{c,e}$, E. Saada$^{d,e}$}
\date{\today}
\maketitle
$$ \ba{l}
^a\,\mbox{\small Laboratoire de Math\'ematiques Blaise Pascal, Universit\'e Clermont Auvergne,} \\
\quad \mbox{\small 63177 Aubi\`ere, France} \\
\quad \mbox{\small e-mail:
Christophe.Bahadoran@uca.fr}\\
^b\, \mbox{\small Institut de Math\'ematiques, \'Ecole Polytechnique F\'ed\'erale, 
Lausanne, Switzerland
} \\
\quad \mbox{\small e-mail:
thomas.mountford@epfl.ch}\\
^c\, \mbox{\small NYU-ECNU Institute of Mathematical Sciences at NYU Shanghai, }\\
\quad \mbox{\small  3663 Zhongshan Road North, Shanghai, 200062, China. e-mail:
 kr26@nyu.edu}\\
^d\, \mbox{\small CNRS, UMR 8145, MAP5,
Universit\'e Paris Descartes,
Sorbonne Paris Cit\'e, France}\\
\quad \mbox{\small e-mail:
Ellen.Saada@mi.parisdescartes.fr}\\ 
^e\, \mbox{\small Centre Emile Borel, Institut Henri Poincar\'e, 75005 Paris, France} \\ \\
\ea
$$
\begin{abstract}
We study asymmetric zero-range processes on $\Z$ with nearest-neighbour 
jumps and site disorder. The jump rate of particles is an arbitrary but bounded 
nondecreasing function of the number of particles. 
We  prove quenched strong local equilibrium at subcritical  and critical  hydrodynamic 
densities, and dynamic local loss of mass at supercritical hydrodynamic densities. 
Our results do not assume starting from local Gibbs states. As byproducts 
of these results, we prove convergence of the process from given initial configurations 
with an asymptotic density of particles to the left of the origin.
In particular, we relax the weak convexity assumption of \cite{bmrs1,bmrs2} 
for the escape of mass property.
\end{abstract}
{\it MSC 2010 subject classification}: 60K35, 82C22.\\ \\
{\it Keywords and phrases}: Asymmetric zero-range process, site disorder, phase transition,
condensation, hydrodynamic limit, strong local equilibrium, large-time convergence.
\section{Introduction}
The  asymmetric zero-range processs (AZRP) with site disorder, introduced in \cite{ev}, is an interacting particle system 
whose dynamics is determined by a nondecreasing jump rate  function $g:\N\to\N$, a function 
 $\alpha :\Z^d\to\R_+$ (called the environment or disorder),
and a jump distribution $p(.)$ on $\Z^d$, for $d\ge 1$.    
A particle leaves site $x$ at rate $\alpha(x)g[\eta(x)]$, where $\eta(x)$ denotes the 
current number of particles at $x$, and moves to $x+z$, where $z$ is chosen at random 
with distribution $p(.)$. This  model has product invariant measures.
It exhibits a critical density  $\rho_c$  (\cite{fk,bfl})  if the function $g$ is bounded,
$\alpha$ has averaging properties (precisely, if its empirical measure converges to a limit: for instance, if it is a realization of
a spatially ergodic process), plus a proper tail assumption. \\ \\
 In this paper, we consider the one-dimensional nearest-neighbour process, that is
  $d=1$ and $p(1)+p(-1)=1$.
The particular situation, where $g(n)=\max(n,1)$ and $p(.)$ is concentrated on the value $1$, is a well-known model: namely,
a series of $ M/M/1 $ queues in tandem. It is known from standard queuing theory (see e.g. \cite{par}) 
that the product measure whose marginal at site $x$ is  the geometric distribution with parameter 
$1-\lambda/\alpha(x)$, is invariant for this process, provided $\lambda<\alpha(x)$ for all $x$. 
The parameter $\lambda$ is the intensity of the Poisson process of departures from each queue, hence it can be interpreted as the mean current of customers along the system.
The supremum value of $\lambda$ for which the above invariant measure is defined is $c:=\inf_x\alpha(x)$, that is the maximum possible current value along the system.
This corresponds to a critical density above which no
product invariant measure exists.
More generally, \cite{afgl} showed that there were no invariant measures (whether product or not) of supercritical density, which can be interpreted as a phase transition.
The model of $M/M/1$ queues in tandem has specific properties which make its analysis  more tractable  than that of the general model. For instance, it can be mapped onto last-passage percolation, and the evolution of the system on $(-\infty,x]$ for $x\in\Z$ only depends on the restriction of the initial configuration to this interval.
\\ \\
In the companion paper \cite{bmrs3} and in previous works \cite{bmrs1,bmrs2}, we started developing robust approaches to study 
various aspects of this phase transition for the general model in dimension one with nearest-neighbour jumps (here by ``general'' we mean that we consider a general function $g$ and not only the particular one $g(n)=\max(n,1)$, but do not refer to the dimension or jump kernel).
For instance, an interesting signature of the above phase transition is the {\em mass escape} phenomenon.
Suppose the process is started from 
a given configuration where the global empirical density of customers is greater than $\rho_c$. 
Since the system is conservative, and thus has a whole family of invariant measures 
carrying different mean densities, it is usually expected to converge 
to the extremal invariant measure carrying the same density as the initial state. 
However, in this case such a measure does not exist. For the totally asymmetric 
$M/M/1$ model, 
it was shown in \cite{afgl} 
that the system converges to the maximal invariant measure (thereby implying a loss of mass). This was established 
in \cite{bmrs1,bmrs2} for the general nearest-neighbour model under a weak convexity assumption, and we showed that
this could fail for non nearest-neighbour jump kernels.\\ \\
Phase transition also arises  in the hydrodynamic limit. We showed in \cite{bmrs3} that
the hydrodynamic behavior of our process is given under hyperbolic time scaling by entropy solutions of a scalar conservation law
\be\label{burgers_intro}
\partial_t\rho(t,x)+\partial_x[f(\rho(t,x))]=0
\ee
where $\rho(t,x)$ is the local particle density field, with a flux-density function $\rho\mapsto f(\rho)$ that is increasing up to critical density and constant thereafter.
 In the absence of disorder,  a hydrodynamic limit of the type \eqref{burgers_intro}
was proved (\cite{rez}) in any dimension  for a class of models including the asymmetric zero-range process. In particular, for M/M/1 queues in tandem, 
\be\label{in_which_case}f(\rho)=\rho/(1+\rho)\ee
 In \cite{lan2}, the hydrodynamic limit of the totally asymmetric 
 $M/M/1$ model with a single slow site is shown to be \eqref{burgers_intro} with a special boundary condition, which may induce condensation at the slow site. 
Hydrodynamic limit was then established in \cite{bfl} for the  asymmetric zero-range process with i.i.d. site disorder  in any dimension,
but for subcritical initial data.  
Hydrodynamic limit for the  totally asymmetric 
$M/M/1$ model  with i.i.d. site disorder,  including phase transition,  was proved in \cite{ks},
using the aforementioned mapping with last-passage percolation. The flux function of the disordered model is different from (and not proportional to)  the one in \eqref{in_which_case} and results from a homogenization effect depending jointly  on the disorder distribution {\em and} the jump rate function $g$.\\ \\
The natural question following hydrodynamic limit is that of {\em local equilibrium}. 
In general, for a conservative particle system endowed with a family 
$(\nu_\rho)_{\rho\in\mathcal R}$ of extremal invariant measures (where $\mathcal R$ 
denotes the set of allowed macroscopic densities), the local equilibrium property 
states that the distribution of the microscopic particle configurations around a site 
with macroscopic location $x\in\R^d$ is close to $\nu_{\rho(t,x)}$, where $\rho(t,x)$ 
is the hydrodynamic density, here given by \eqref{burgers_intro}. This property has a 
weak (space-averaged) and a strong (pointwise) formulation, see e.g. \cite{kl}.
In the usual setting of \cite{kl}, provided moment bounds are available, either formulation
actually implies the hydrodynamic limit.
It was shown  in  \cite{lan}  that,  assuming no disorder and strict convexity or concavity of 
the flux function $f$,  
the strong version of local equilibrium 
could be derived in a  fairly 
general  way  from the weak version using monotonicity and translation invariance 
of the dynamics.  The approach of  \cite{lan}  
 requires starting under a local equilibrium product measure. 
 Thus it investigates {\em conservation}, but not {\em spontaneous creation} 
 of local equilibrium.
The latter stronger property is however necessary to study convergence of the process 
from a deterministic initial state (we will explain this point in greater detail below).
Furthermore, the approach of \cite{lan} breaks down 
in the disordered case or (even without disorder) in the absence of strict convexity 
of the flux function. Let us briefly explain these problems.\\ \\
First, in the quenched setting, we lose the translation invariance property used in \cite{lan}. 
Even more fundamentally, the weak and strong local equilibrium measures
 are {\em different} in the  disordered  case. Indeed, hidden in the spatial 
 averaging is a simultaneous averaging on the disorder which would lead to an {\em annealed} 
 equilibrium measure for the weak local equilibrium. Therefore, the very idea of deriving 
 one from the other should be given up. Next, the local equilibrium property is expectedly 
 {\em wrong} at supercritical hydrodynamic densities $\rho(t,x)>\rho_c$, since a corresponding 
 equilibrum measure does not exist in this case. This already poses a problem at the level of the hydrodynamic limit (\cite{bmrs3}),
since the usual heuristic for \eqref{burgers_intro} is based on the idea that the macroscopic flux function $f$ is the expectation of  microscopic flux function under local equilibrium. \\ \\
The purpose of this paper is 
 to introduce a new approach for the derivation of {\em quenched strong} local equilibrium 
 in order to address this question, which was left open by the previous works \cite{bfl,sep}. 
In the case of supercritical hydrodynamic density $\rho(t,x)>\rho_c$, we prove that 
the local equilibrium property fails, and that, locally around ``typical points''  of the environment, 
the distribution of the microscopic state is close to the {\rm critical measure} with density 
$\rho_c$: this can be viewed as a {\sl dynamic version} of the loss of mass property 
studied in \cite{afgl,bmrs1,bmrs2}. The dynamic loss of mass that we establish here allows 
us to remove the convexity assumption  used  in \cite{bmrs1,bmrs2}, but for a slightly less general 
class of initial configurations.\\ \\
Our approach to strong local equilibrium yields new results even in the special case of 
homogeneous systems, that is $\alpha(x)\equiv 1$. Indeed, 
in this paper, we  establish  {\em spontaneous creation} of local equilibrium: 
that is, we only require starting from a sequence of (possibly deterministic) initial 
configurations with a given macroscopic profile, but with a distribution far away from 
local equilibrium. 
Thanks to this, we obtain fairly complete convergence results for the process starting 
from a {\em given} initial state.
Such results are usually difficult to obtain for asymmetric models, where most convergence 
results start from an initial spatially ergodic distribution. \\ \\
The connection alluded to above between creation of local equilibrium and convergence results can be understood by
letting  the system start initially from a uniform hydrodynamic profile with density $\rho$. The result of \cite{bmrs3} implies that no evolution is seen at all on the hydrodynmic scale, because uniform profiles are stationary solutions of the hydrodynamic equation.
If $\rho<\rho_c$, one way to achieve this profile is  by distributing the initial configuration according to the product stationary state of the dynamics with mean density $\rho$. Then
no evolution will be seen microscopically either, and {\em conservation} of local equilibrium reduces to a local
formulation of this stationarity. In contrast, assume the initial flat profile is achieved by a non-stationary distribution (for instance, a deterministic configuration). Then, while the hydrodynamic profile does not evolve in time, the local equilibrium {\em creation} property implies that the out-of-equilibrium configuration converges to the product invariant measure with mean density $\rho$ if $\rho<\rho_c$,
or to the critical invariant measure if $\rho\geq\rho_c$. Thus, behind the {\em macroscopic} stationarity of the system on the hydrodynamic scale,
two different mechanisms of {\em microscopic} evolution are concealed: the {\em convergence} mechanism, by which the system locally sets itself under an equilibrium measure, and the {\em mass escape} mechanism, by which, locally, the supercritical mass concentrates itself around slowest sites, leading to a local equilibrium measure that disagrees with the hydrodynamic density. We refer the reader to Example \ref{ex_ill} in Subsection \ref{subsec:results} for a precise formulation of the above discussion.
\\ \\
Similar local equilibrium and convergence results were obtained 
in \cite{bam} for the one-dimensional ASEP without disorder. However, in the approach used there, 
a key ingredient was the strict concavity of the flux function $f$. This property enabled 
 the authors  to use the {\em a priori} two-block estimate of \cite{kos} as a first step to obtain 
a spatially averaged version of the result. In the present case, we introduce a 
new argument that assumes only genuine nonlinearity of $f$ (that is, the absence of any 
linear portion on its graph).  Under this assumption, we prove the spatially averaged property, 
which actually would imply an a priori two-block estimate, thereby extending the result of 
\cite{kos} to flux functions  without strict convexity or concavity.   \\ \\
The paper is organized as follows. In Section \ref{sec_results}, we introduce the model 
and notation, and state our main results. 
 In Section \ref{sec:preliminary} we recall the graphical construction of the model,
earlier results on currents, and we define specific couplings.  
In Section \ref{sec:proof_loc_eq}, we prove quenched strong local equilibrium 
around subcritical hydrodynamic densities. 
 In Sections \ref{sec:proof_loc_eq_2} and \ref{sec:cesaro}, we prove the dynamic loss of 
mass property around supercritical hydrodynamic densities. Finally, in Section \ref{sec:proof_bfl}, we investigate the problem of convergence starting from a given initial configuration.
\section{Notation and results}
\label{sec_results}
In the sequel,  $\R$ denotes the set of real numbers, 
$\Z$  the set of signed integers, $\N=\{0,1,\ldots\}$ 
the set of nonnegative integers
 and $\overline{\N}:=\N\cup\{+\infty\}$. 
For $x\in\R$,  $\lfloor x\rfloor$  
denotes the integer part of $x$, that is largest integer $n\in\Z$ 
such that $n\leq x$.  
The notation $X\sim\mu$ means that a random 
variable $X$ has probability distribution $\mu$.\\ \\
Let   $\overline{\mathbf{X}}:=\overline{\N}^{\Z}$   denote the set of particle configurations, 
and ${\mathbf{X}}:=\N^\Z$ the subset of particle configurations with finitely many particles at each site.
 A configuration in $\overline{\mathbf{X}}$ is of the form  $\eta=(\eta(x):\,x\in\Z)$ 
 where $\eta(x)\in\overline{\N}$ for each $x\in\Z$. 
 The set  $\overline{\mathbf{X}}$  is equipped with the  coordinatewise  order:  for  
 $\eta,\xi\in\overline{\mathbf{X}}$,  we write 
$\eta\leq\xi$ if and only if $\eta(x)\leq\xi(x)$ for every $x\in\Z$;  in the latter 
inequality, $\leq$ stands for extension to $\overline{\N}$ of
the natural order on $\N$, defined by $n\leq+\infty$ for every $n\in\N$, and $+\infty\leq+\infty$. 
  This order is extended to probability measures on $\overline{\mathbf{X}}$: For two 
 probability measures $\mu,\nu$, we write $\mu\le\nu$ if and only if
$\int fd\mu\le\int fd\nu$ for any nondecreasing function $f$ on $\overline{\mathbf{X}}$.  
We denote by $(\tau_x)_{x\in\Z}$ the group of spatial shifts. For $x\in\Z$, the action of $\tau_x$  on particle configuration 
is defined by $(\tau_x\eta)(y)=\eta(x+y)$ for every $\eta\in\overline{\bf X}$. Its action on a function $f$ from $\bf X$ or
$\overline{\bf X}$ to $\R$ is defined by $\tau_x f:=f\circ\tau_x$.
\subsection{The process and its invariant measures}\label{subsec:procinv}
Let  $p(.)$ be a probability measure on $\Z$ supported on $\{-1,1\}$.
 We set $p:=p(1)$, $q=p(-1)=1-p$, and assume $p\in(1/2,1]$,
so that the mean drift of the associated random walk is $p-q>0$.
\\
Let $g:\N\to[0,+\infty)$ be a nondecreasing function such that
\be\label{properties_g}
g(0)=0<g(1)\leq \lim_{n\to+\infty}g(n)=:g_\infty<+\infty
\ee
We extend $g$ to  $\overline{\N}$  by setting $g(+\infty)=g_\infty$.
Without loss of generality, we henceforth assume 
\be\label{ginfty}
g(+\infty)=g_\infty=1\ee   
Let 
$\alpha=(\alpha(x),\,x\in\Z)$ (called the environment or disorder) be a 
 $[0,1]$-valued sequence. 
The set of environments is denoted by
\be\label{set_env}
{\bf A}:=[0,1]^{\Z}
\ee
We consider the 
Markov process  $(\eta_t^\alpha)_{t\geq 0}$
on  $\overline{\mathbf{X}}$  with generator given for any cylinder function  
$f:\overline{\mathbf{X}}\to\R$  by
\be\label{generator}
L^\alpha f(\eta)  =  \sum_{x,y\in\Z}\alpha(x)
p(y-x)g(\eta(x))\left[
f\left(\eta^{x,y}\right)-f(\eta)
\right]\ee
where, if $\eta(x)>0$,  $\eta^{x,y}:=\eta-\mathfrak{\delta}_x+\mathfrak{\delta}_y$ 
denotes the new configuration obtained from $\eta$ after a particle has 
jumped from $x$ to $y$ (configuration  $\mathfrak{\delta}_x$ has one particle at $x$ and 
no particle elsewhere; addition of configurations is meant coordinatewise). 
 In cases of infinite particle number,  the following interpretations hold:
  $\eta^{x,y}=\eta-\mathfrak{\delta}_x$ if $\eta(x)<\eta(y)=+\infty$ (a particle is removed from $x$), 
  $\eta^{x,y}=\eta+\mathfrak{\delta}_y$ if $\eta(x)=+\infty>\eta(y)$ (a particle is created at $y$),
$\eta^{x,y}=\eta$ if $\eta(x)=\eta(y)=+\infty$. \\ \\
This process has the property 
that if $\eta_0\in{\mathbf{X}}$, then almost surely, one has $\eta_t\in{\mathbf{X}}$ 
for every $t>0$. In this case, it may be considered as a Markov process on $\mathbf{X}$ with generator
\eqref{generator} restricted to functions $f:{\mathbf{X}}\to\R$. \\
 When the environment $\alpha(.)$ 
is identically equal to $1$, we recover
the {\em homogeneous} zero-range process (see \cite{and} for its detailed analysis). \\ \\ 
For the existence and uniqueness of  $(\eta_t^\alpha)_{t\geq 0}$  
see \cite[Appendix B]{bmrs2}.   
 Recall from \cite{and} that, since $g$ is nondecreasing, 
 $(\eta_t^\alpha)_{t\geq 0}$  
is attractive, i.e.
its semigroup maps nondecreasing functions (with respect to 
the partial order on $\overline{\mathbf{X}}$) onto nondecreasing functions.
One way to see this is to construct a monotone coupling of  two copies of the process, 
see Subsection \ref{subsec:Harris} below. \\ \\
For ${\beta}<1$,  
we define the probability measure $\theta_{{\beta}}$ on $\N$
 by \label{properties_a}
\begin{equation}\label{eq:theta-lambda}
\theta_{{\beta}}(n):=Z({\beta})^{-1}\frac{\beta^n}{g(n)!},\quad n\in\N,
\qquad\mbox{where}\quad Z(\beta):=\sum_{\ell=0}^{+\infty}\frac{\beta^\ell}{g(\ell)!}
\end{equation}
We denote by  $\mu_\beta^\alpha$  the invariant measure 
of $L^\alpha$ defined (see e.g. \cite{bfl}) as the product measure 
with marginal $\theta_{\beta/\alpha(x)}$ at site $x$: 
\be\label{def_mu_lambda_alpha}
\mu^\alpha_{\beta}(d\eta):=\bigotimes_{x\in\Z}\theta_{\beta/\alpha(x)}[d\eta(x)]
\ee
 The measure \eqref{def_mu_lambda_alpha} can be defined on $\overline{\mathbf{X}}$  for
\be\label{cond_simple_bar}
{\beta}\in[0,\inf_{x\in\Z}\alpha(x)]
\ee
by using the conventions
\be\label{extension_theta}\theta_1:=\delta_{+\infty}
\ee
\be
\label{convention_2}
\frac{\beta}{a}=0\mbox{ if }\beta=0\mbox{ and }a\geq 0
\ee
The measure \eqref{def_mu_lambda_alpha} is always supported on $\bf X$ if
\be\label{cond_lambda_simple}
\beta\in(0,\inf_{x\in\Z}\alpha(x))\cup\{0\}
\ee
When  $\beta=\inf_{x\in\Z}\alpha(x)>0$, conventions  \eqref{extension_theta}--\eqref{convention_2} 
yield  a measure supported on configurations with infinitely many particles 
at all sites $x\in\Z$ that achieve $\inf_{x\in\Z}\alpha(x)$, and finitely many particles at other sites.
In particular, this measure is supported on $\bf X$  when $\inf_{x\in\Z}\alpha(x)$ is not achieved. When $\inf_{x\in\Z}\alpha(x)=0$, the measure \eqref{def_mu_lambda_alpha} is supported on the empty configuration.
Since 
$(\theta_{{\beta}})_{{\beta}\in [0,1)}$ is an exponential family, we have that,  for $\beta\in[0,\inf_{x\in\Z}\alpha(x)]$, 
 \be\label{stoch_inc}
 \mu^\alpha_{{\beta}}\,
 \mbox{ is weakly continuous and stochastically increasing with respect to }{\beta}
 \ee
\subsection{ The hydrodynamic limit}
\label{subsec:hydro}
Let us first recall the results of \cite{bmrs3} with respect to the hydrodynamic limit of our process. We begin with the 
following standard 
definitions in hydrodynamic limit theory. We denote by
$\mathcal M(\R)$ the set of Radon measures on $\R$. To a particle configuration
 $\eta\in{\mathbf X}$, we associate a sequence of empirical measures $(\pi^N(\eta):\,N\in\N\setminus\{0\})$ defined by
$$
\pi^N(\eta):=\frac{1}{N}\sum_{y\in\Z}\eta(y)\delta_{y/N}\in\mathcal M(\R)
$$
Let $\rho_0(.)\in L^\infty(\R)$, and let $(\eta^N_0)_{N\in\N\setminus\{0\}}$ denote 
a sequence of $\bf X$-valued random variables. We say this sequence has 
limiting density profile $\rho_0(.)$, if the sequence of empirical measures 
$\pi^N(\eta^N_0)$ converges in probability to the deterministic measure 
$\rho_0(.)dx$ with respect to the topology of vague convergence.    \\ \\
\textbf{Assumptions on the environment.} To state the results of \cite{bmrs3},  we introduce the following assumptions on $\alpha$.
\begin{assumption}\label{assumption_ergo}
There exists a probability measure $Q_0=Q_0(\alpha)$
on $[0,1]$ such that
\be\label{eq:assumption_ergo}
Q_0(\alpha)=\lim_{n\to+\infty}\frac{1}{n+1}\sum_{x=-n}^0 \delta_{\alpha(x)}
=\lim_{n\to+\infty}\frac{1}{n+1}\sum_{x=0}^n \delta_{\alpha(x)}
\ee
\end{assumption}
Assumption \ref{assumption_ergo} means that the environment $\alpha$ has ``averaging properties''.
Note that it implies
\be\label{support_Q}
{ c:=}\inf_{x\in\Z}\alpha(x)\leq\inf{\rm supp}\,Q_0
\ee
The next assumption sets a restriction
on the sparsity of  slow sites (where by ``slow sites''  
we mean sites where the disorder variable becomes arbitrarily close 
or equal to the infimum value $c$). 
\begin{assumption}\label{assumption_dense} We say that the environment $\alpha$ 
has  {\em macroscopically dense defects} if 
there exists a sequence  of sites  $(x_n)_{n\in\Z}$  such that 
\be\label{assumption_afgl}
\forall n\in\Z,\,x_n<x_{n+1};\quad
\lim_{n\to\pm\infty}\alpha(x_n)= c 
\ee
and
\be\label{assumption_afgl_0}
\lim_{n\to\pm\infty}\frac{x_{n+1}}{x_n}=1
\ee
\end{assumption}
Remark that \eqref{assumption_afgl} implies in particular
\be\label{not_too_sparse}
\liminf_{x\to\pm\infty}\alpha(x)= c
\ee
The role of Assumption \ref{assumption_dense}, and consequences of its violation, are discussed in detail  in \cite[Section 3]{bmrs3}.
Assumptions \ref{assumption_ergo} and \ref{assumption_dense} are satisfied for instance in the case of an ergodic random environment:
\begin{example}\label{example_random}
Let $Q$ be a spatially ergodic probability measure on $\bf A$ with marginal $Q_0$
(for instance,  we may consider the i.i.d. case  $Q=Q_0^{\otimes\Z}$). Then, $Q$-almost every $\alpha\in\bf A$ satisfies
Assumption \ref{assumption_ergo} and equality in \eqref{support_Q}. 
\end{example}
Remark that in Example \ref{example_random}, $Q$-a.e. realization of the environment $\alpha$ also satisfies Assumption \ref{assumption_dense}. Following is an example of a deterministic environment taken from \cite[Section 3]{bmrs3} satisfying both assumptions \ref{assumption_ergo} and \ref{assumption_dense}. Unlike Example \ref{example_random}, in this example, \eqref{support_Q} may be a strict inequality.
\begin{example}\label{example_det}
Let
$$
F_{Q_0}(u):=Q_0([0,u]),\quad\forall u\geq 0
$$
denote the cumulative distribution function of $Q_0$. Let ${\mathcal X}=\{x_n:\,n\in\Z\}$
be  a  doubly infinite $\Z$-valued sequence, and $(\alpha_n)_{n\in\Z}$ a doubly infinite $[0,1]$-valued sequence, 
such that
\begin{eqnarray*}
\lim_{n\to\pm\infty}\alpha_n & = & c\leq\inf{\rm supp}\,Q_0\\
\lim_{n\to\pm\infty}\frac{x_{n+1}}{x_n} & = & 
1\\
\lim_{n\to\pm\infty}\frac{n}{x_n} & = &
0\\
\end{eqnarray*}
Define
\begin{eqnarray}
u(x) & := & \sum_{n\in\Z}1_{[{x}_n,{x}_{n+1})}(x)\frac{x- { x_n}}{  x_{n+1}-x_n}\label{sample_uniform}\\ \nonumber\\
\alpha(x) & := &  F_{Q_0}^{-1}(u(x)) \label{inversion}
{\bf 1}_{\Z\setminus{\mathcal X}}(x)+{\bf 1}_{\{c<\inf{\rm supp}\,Q_0\}}\sum_{n\in\Z}\alpha_n{\bf 1}_{\{ {x}_n\}}(x)
\end{eqnarray}
For instance,
\be\label{ex_det_ex}
 x_n=
{\bf 1}_{\{n\neq 0\}}\sgn(n)\lfloor{|n|}^\kappa\rfloor,\quad
\kappa>1
\ee
\end{example}
\mbox{}\\
\textbf{A family of flux functions.} Let $\mathcal D=\{(Q_0,\gamma)\in{\mathcal P}([0,1])\times [0,1]:\,\gamma\leq\inf{\rm supp}\,Q_0\}$. We define  a family 
$\{f^{Q_0,\gamma}:\,(Q_0,\gamma)\in{\mathcal D}\}$ of functions $f^{Q_0,\gamma}:[0,+\infty)\to[0,+\infty)$
as follows. 
Let
\be\label{mean_density}
R({\beta}):=\sum_{n=0}^{+\infty}n\theta_{{\beta}}(n),\quad\beta\in[0,1[
\ee
denote the mean value of $\theta_\beta$.
The function $R(.)$ is  analytic on $[0,1)$, increasing from $0$ to $+\infty$,  and extended (cf. \eqref{extension_theta}) by setting
$R(1)=+\infty$.  
We now set
\begin{eqnarray}
\overline{R}^{Q_0}({\beta}) 
& :=  & \int_{[0,1]}R\left[\frac{\beta}{a}\right]dQ_0[a]
\label{average_afgl_ergodic}\\
\label{critical_parameter_max}
\rho_c(Q_0) & := & \overline{R}^{Q_0}[
\inf{\rm supp}\,Q_0
]
\end{eqnarray}
 For $\alpha\in{\bf A}$ satisfying \eqref{eq:assumption_ergo}, and $\beta<\inf_{x\in\Z}\alpha(x)$, the quantity $\overline{R}^{Q_0(\alpha)}(\beta)$ (see Lemma \ref{lemma_inter} below) represents the asymptotic mean density of particles under
$\mu^\alpha_\beta$. 
The function $\overline{R}^{Q_0}$ is increasing and continuous on the interval $[0,\inf{\rm supp}\,Q_0]$
(see \cite[Lemma 3.1]{bmrs3} and Lemma \ref{lemma_properties_flux} below). 
We may thus define the inverse of $\overline{R}^{Q_0}$
on its image, and set
\be\label{flux_parameter}
f^{Q_0,\gamma}(\rho):=\left\{
\ba{lll}
(p-q)\left(\overline{R}^{Q_0}\right)^{-1}(\rho) & \mbox{if} & \rho<
 \overline{R}^{Q_0}(\gamma)\\
(p-q)\gamma & \mbox{if} & \rho\geq \overline{R}^{Q_0}(\gamma)
\ea
\right.
\ee
\be\label{def_f_alpha}
f^\alpha:=f^{Q_0(\alpha),\inf\alpha},\quad \rho_c^\alpha:= \rho_c(Q_0(\alpha))
\ee
Useful Properties of   $\overline{R}^{Q_0}$  and $f^{Q_0,\gamma}$ are stated in the following lemma, which is a consequence of
\cite[Lemma 3.1]{bmrs3}.
\begin{lemma}\label{lemma_properties_flux} 
 (i)  The function $\overline{R}^{Q_0}$  and
its inverse are increasing and analytic, respectively from  $[0,\inf{\rm supp}\,Q_0]$ to $[0,\rho_c(Q_0)]\cap\R$ 
and from $[0,\rho_c(Q_0)]\cap \R$ to $[0,\inf{\rm supp}\,Q_0]$.   (ii) There is a dense subset $\mathcal R$ 
of  $[0,\rho_c(Q_0)]\cap\R$  such that for any $\rho\in\mathcal R\cap[0,\gamma]$, $f^{Q_0,\gamma}$ is either 
uniformly strictly convex or uniformly strictly concave in a neighbourhood 
of $\rho$. 
\end{lemma}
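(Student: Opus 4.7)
The plan is to deduce both assertions from the integral representation \eqref{average_afgl_ergodic} together with analyticity of the single-site mean $R$, viewed as a function of the exponential-family parameter $\beta$.

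\textbf{Part (i).} First I would record that, because $\theta_\beta$ in \eqref{eq:theta-lambda} is an exponential family with partition function $Z$, we have $R(\beta)=\beta Z'(\beta)/Z(\beta)$ and $R'(\beta)$ equals the variance of $\theta_\beta$ divided by $\beta$; hence $R$ is real-analytic and strictly increasing on $[0,1)$ with $R(0)=0$. Since $g(\ell)!^{1/\ell}\to g_\infty=1$, the radius of convergence of $Z$ equals $1$, so $R$ admits a power series expansion $R(u)=\sum_{k\ge 1}c_k u^k$ with $c_1=1/g(1)>0$ on $[0,1)$. For $\beta<\inf{\rm supp}\,Q_0$ and $Q_0$-a.e.\ $a$, monotonicity of $R$ gives $R(\beta/a)\le R(\beta/\inf{\rm supp}\,Q_0)<+\infty$; this dominating function lets one differentiate under the integral to any order and, via a complex extension together with Morera's theorem, concludes that $\overline{R}^{Q_0}$ is real-analytic on $[0,\inf{\rm supp}\,Q_0)$ with strictly positive derivative. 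It is then a real-analytic increasing bijection of this interval onto $[0,\rho_c(Q_0))$, extended by monotone continuity at the right endpoint (possibly with value $+\infty$). The real-analytic inverse function theorem then yields analyticity of the inverse on $[0,\rho_c(Q_0))\cap\R$.

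\textbf{Part (ii).} On $[0,\overline{R}^{Q_0}(\gamma))$, the function $f^{Q_0,\gamma}=(p-q)(\overline{R}^{Q_0})^{-1}$ is real-analytic by (i). An analytic function either vanishes identically on a connected open set or has a discrete zero set, so it suffices to show that $f^{Q_0,\gamma}$ is not affine on any sub-interval: then $(f^{Q_0,\gamma})''$ is analytic and not identically zero, and on the dense complement $\mathcal R$ of its discrete zero set, continuity of $(f^{Q_0,\gamma})''$ gives uniform strict convexity or concavity in a small neighbourhood. By analyticity this reduces to ruling out global affineness of $\overline{R}^{Q_0}$ on $[0,\inf{\rm supp}\,Q_0)$ (we may assume $\inf{\rm supp}\,Q_0>0$, else $\rho_c(Q_0)=0$ and the statement is vacuous). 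Termwise integration, justified for $\beta$ small by the uniform bound above, yields
$$\overline{R}^{Q_0}(\beta)=\sum_{k\ge 1}c_k\beta^k\int a^{-k}\,dQ_0(a)$$
near $\beta=0$. If the left-hand side were affine, positivity of the moments $\int a^{-k}\,dQ_0$ would force $c_k=0$ for every $k\ge 2$, making $R$ linear; but $R(\beta)=c_1\beta$ forces $Z(\beta)=e^{c_1\beta}$, which imposes $g(\ell)=\ell/c_1$, contradicting the boundedness assumption $g\le g_\infty=1$.

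The main obstacle I anticipate is the extension of analyticity up to the right endpoint $\inf{\rm supp}\,Q_0$ together with the uniform-in-$a$ justification of term-by-term integration; once those two pieces of bookkeeping are in hand, the convexity/concavity dichotomy is a standard consequence of the analytic identity theorem.
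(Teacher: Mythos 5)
Your proof is sound in substance, but it takes a genuinely different route from the paper, because the paper does not actually prove this lemma: part (i) is imported verbatim from \cite[Lemma 3.1]{bmrs3}, and part (ii) is merely declared to be ``a consequence thereof''. Your argument supplies precisely the ingredient this consequence silently relies on: analyticity and monotonicity of $\overline{R}^{Q_0}$ alone do not give (ii), since an analytic increasing function could a priori be affine, in which case no $\rho<\overline{R}^{Q_0}(\gamma)$ would have a neighbourhood of uniform strict convexity or concavity. Your moment expansion $\overline{R}^{Q_0}(\beta)=\sum_{k\ge1}c_k\beta^k\int a^{-k}dQ_0(a)$ near $\beta=0$, the identity theorem, and the observation that $R(\beta)=c_1\beta$ would force $Z(\beta)=e^{c_1\beta}$, hence $g(\ell)=\ell/c_1$ unbounded, contradicting $g\le g_\infty=1$, is a correct and self-contained way to exclude this degenerate case (alternatively, a linear $R$ already contradicts $R(\beta)\to+\infty$ as $\beta\to1^-$); the passage from ``$f''$ analytic, not identically zero'' to the dense set $\mathcal R$ with locally uniform strict convexity/concavity is then standard, as you say. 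Two caveats, neither fatal: the power series of $R$ at $0$ need only converge on a disc possibly smaller than $[0,1)$ (since $Z$ may have complex zeros), which is harmless because you only integrate termwise for small $\beta$, where $\sum_k|c_k|(\beta/\inf{\rm supp}\,Q_0)^k<\infty$ justifies Fubini; and your part (i) yields analyticity of $\overline{R}^{Q_0}$ on $[0,\inf{\rm supp}\,Q_0)$ and of its inverse on $[0,\rho_c(Q_0))$ only, not at the closed right endpoint as the statement literally reads --- but the applications in the paper only ever use densities in $\mathcal R$ strictly below $\rho_c$, and the endpoint formulation is in any case inherited from \cite{bmrs3} rather than proved in this paper.
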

Note that only statement (i) of Lemma \ref{lemma_properties_flux} is contained in Lemma 3.1 of \cite{bmrs3}, but (ii) is a consequence thereof.
We can now state the main result of \cite{bmrs3}.
\begin{theorem}
\label{th_hydro}
(\cite[Theorem 2.1]{bmrs3})
Assume the environment $\alpha$ satisfies  Assumption \ref{assumption_ergo}, 
and the  sequence $(\eta^N_0)_{N\in\N\setminus\{0\}}$ has limiting density profile $\rho_0(.)\in L^\infty(\R)$. 
For each $N\in\N\setminus\{0\}$, let  $(\eta^{\alpha,N}_{t})_{t\geq 0}$  denote the process with initial 
configuration $\eta^N_0$ and generator \eqref{generator}. Assume either that 
the initial data is subcritical, that is $\rho_0(.)< \rho_c^{\alpha}$; or, that the 
defect density Assumption \ref{assumption_dense} holds.
 Let $\rho(.,.)$ denote the entropy solution to 
\be\label{conservation_law}
\partial_t \rho(t,x)+\partial_x f^\alpha[\rho(t,x)]=0
\ee
 with initial datum $\rho_0(.)$.
Then for any $t>0$,
 the sequence  $(\eta^{\alpha,N}_{Nt})_{N\in\N\setminus\{0\}}$ 
 has limiting density profile $\rho(t,.)$.
\end{theorem}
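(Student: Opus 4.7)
The plan is to follow the constructive/variational approach to hydrodynamic limits that has become standard for attractive asymmetric conservative particle systems. The starting point is that the product measures $\mu^\alpha_\beta$ are stationary under $L^\alpha$ for $\beta\in[0,\inf_x\alpha(x))$, so that the hydrodynamic behaviour is trivial for constant profiles started from equilibrium, with asymptotic density $\overline R^{Q_0(\alpha)}(\beta)$ and asymptotic current $(p-q)\beta$. The goal is then to bootstrap this equilibrium information to non-equilibrium initial data via attractiveness, currents, and a macroscopic stability argument.

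The first substantive step is to solve the Riemann problem, i.e.\ to identify the limit when $\rho_0(x)=\lambda\mathbf{1}_{\{x<0\}}+\rho\mathbf{1}_{\{x\geq 0\}}$, in the subcritical regime $\lambda,\rho<\rho_c^\alpha$. For this, I would couple the non-equilibrium evolution with two equilibrium copies distributed according to $\mu^\alpha_{\beta_\lambda}$ and $\mu^\alpha_{\beta_\rho}$ (where $\beta_\lambda=(\overline R^{Q_0})^{-1}(\lambda)$, and similarly for $\rho$), and analyse the rescaled current $N^{-1}J^\alpha_N(Nt,\lfloor Nvt\rfloor)$ across a moving observer. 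The ergodicity Assumption \ref{assumption_ergo} produces, by a law-of-large-numbers argument over the environment, the homogenized flux function $f^\alpha$: the quenched microscopic current averages out against the distribution $Q_0(\alpha)$ of the disorder to give $(p-q)\beta$, and the mean density at parameter $\beta$ is precisely $\overline R^{Q_0}(\beta)$. Identifying the limiting current with a shock or rarefaction wave then follows from the Lax entropy conditions, using monotonicity and Lemma \ref{lemma_properties_flux}(ii) on the genuine nonlinearity of $f^\alpha$.

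The passage from Riemann data to a general $L^\infty$ profile $\rho_0$ would proceed by a Glimm-type approximation scheme: approximate $\rho_0$ by step functions, localise via finite propagation to reduce each piece to a Riemann problem, and use a macroscopic stability estimate in $L^1$ to compare the microscopic evolutions of nearby profiles, matching the $L^1$-contraction of Kruzhkov entropy solutions on the macroscopic side. Attractiveness and the coupling with equilibrium measures provide the microscopic version of $L^1$-contraction. As long as the data stays subcritical, all relevant $\mu^\alpha_\beta$ are supported on $\mathbf{X}$ and the scheme closes on itself.

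The main obstacle, and the real purpose of Assumption \ref{assumption_dense}, is the supercritical regime $\rho_0>\rho_c^\alpha$, where no stationary measure of the prescribed density exists and the equilibrium bracketing from above breaks down. The idea is that slow sites, whose existence and sufficient density are guaranteed by \eqref{assumption_afgl}--\eqref{assumption_afgl_0}, act as macroscopic bottlenecks: at any site $x_n$ with $\alpha(x_n)$ close to $c$, the outgoing current is bounded above by $(p-q)\alpha(x_n)g_\infty=(p-q)\alpha(x_n)$, and the spacing condition $x_{n+1}/x_n\to 1$ ensures that these bottlenecks become macroscopically dense, saturating the maximal current at $(p-q)c$ and producing the flat portion of $f^\alpha$. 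The hard part is to make this rigorous uniformly in $N$: one must control the accumulation of mass upstream of each slow site without overcounting, and show that this upstream congestion does not disturb the hydrodynamic profile on the macroscopic scale. This is where a careful subadditive/coupling argument with the envelope of stationary measures $\mu^\alpha_\beta$ as $\beta\uparrow c$ is required, together with the monotone extension \eqref{extension_theta}--\eqref{convention_2} of these measures to $\overline{\mathbf X}$ to handle the limiting infinite-density state at the slowest sites.
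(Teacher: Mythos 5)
You should first note that this statement is not proved in the present paper at all: Theorem \ref{th_hydro} is quoted verbatim from the companion work \cite[Theorem 2.1]{bmrs3}, so there is no internal proof to compare your argument against; what follows is an assessment of your outline on its own terms. Your general strategy --- equilibrium measures $\mu^\alpha_\beta$ as the input, Riemann problem first, then a Glimm-type approximation of general $L^\infty$ data combined with a microscopic $L^1$-stability estimate furnished by attractiveness --- is indeed the constructive-hydrodynamics route the authors have developed (see \cite{bgrs5}), and in the subcritical regime your sketch is a reasonable, if compressed, description of how such a proof goes. Two caveats even there: the flux $f^\alpha$ is in general neither convex nor concave, so ``Lax entropy conditions'' is not the right identification tool; one needs the variational characterization of Riemann entropy solutions for arbitrary Lipschitz flux (of the type recalled in \eqref{entropy_riemann_1}--\eqref{entropy_riemann_2}), and genuine nonlinearity on a dense set (Lemma \ref{lemma_properties_flux}(ii)) is not actually needed for the hydrodynamic limit itself --- it is needed later for local equilibrium.

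The genuine gap is the supercritical/critical regime, which is precisely the content that makes this theorem nontrivial and is the reason Assumption \ref{assumption_dense} appears. Your proposal correctly identifies the difficulty (no invariant measure above $\rho_c^\alpha$, so the upper equilibrium bracketing fails, and the flat part of $f^\alpha$ must come from saturation of the current at $(p-q)c$ through macroscopically dense slow sites), but then defers it entirely to ``a careful subadditive/coupling argument \dots is required'' without giving that argument. This is not a technicality one can wave at: one must (a) prove a quantitative upper bound on the current emitted past a near-slow site, uniformly on the hydrodynamic scale --- this is exactly the role of source-configuration estimates such as Proposition \ref{current_source} applied to $\eta^{*,y}$, together with the current-comparison tools of Lemma \ref{lemma_current} and Corollary \ref{corollary_consequence}; (b) show a matching lower bound, i.e.\ that the current actually attains $(p-q)c$ in the supercritical Riemann problem, which is where the spacing condition \eqref{assumption_afgl_0} enters quantitatively; and (c) handle supercritical Riemann data that cannot be realized by product invariant measures at all, so the coupling must be with source-like or truncated configurations rather than with the ``envelope of stationary measures $\mu^\alpha_\beta$ as $\beta\uparrow c$'' you invoke (that envelope only controls densities up to $\rho_c^\alpha$). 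Without these steps the flat portion of $f^\alpha$ is asserted rather than derived, so the proposal as written does not constitute a proof of the theorem in the regime where Assumption \ref{assumption_dense} is required.
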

\subsection{Results}\label{subsec:results}
 We  now come to  the main results of this paper, which are  concerned with 
strong local equilibrium with respect to the hydrodynamic limit \eqref{conservation_law}. 
To state these results, we need a parametrization of the invariant measures as a function of the asymptotic density.
By Lemma \ref{lemma_properties_flux} and \eqref{def_f_alpha},  we may define
\be\label{invariant_density}
\mu^{\alpha,\rho}:=\mu^\alpha_{(\overline{R}^{Q_0(\alpha)})^{-1}(\rho)},\,
\theta^{\alpha,\rho}:=\theta^\alpha_{(\overline{R}^{Q_0(\alpha)})^{-1}(\rho)},\,
\quad\forall \rho\in[0,\rho_c^\alpha)
\ee
The following lemma is established in \cite{bmrs3}.
\begin{lemma}(\cite[Lemma 2.1]{bmrs3})
\label{lemma_inter}
Let 
$\rho\in[0,\rho_c^\alpha)$, and
$\eta^{\alpha,\rho}\sim\mu^{\alpha,\rho}$.
Then the following limit holds in probability:
that is, 
\be\label{justify_mean}
\lim_{n\to+\infty}\frac{1}{n+1}\sum_{x=0}^n\eta^{\alpha,\rho}(x)=
\lim_{n\to+\infty}\frac{1}{n+1}\sum_{x=0}^n\eta^{\alpha,\rho}(-x)=\rho
\ee
\end{lemma}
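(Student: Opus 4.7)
The plan is to recognize the claim as a weak law of large numbers for the sum of independent, non-identically distributed random variables. By definition, $\mu^{\alpha,\rho}=\mu^\alpha_\beta$ where $\beta:=(\overline{R}^{Q_0(\alpha)})^{-1}(\rho)\in[0,\inf\mathrm{supp}\,Q_0(\alpha))$ thanks to Lemma \ref{lemma_properties_flux}(i). By \eqref{def_mu_lambda_alpha} the coordinates $\eta^{\alpha,\rho}(x)$ are then independent, with $\Exp[\eta^{\alpha,\rho}(x)]=R(\beta/\alpha(x))$ and variance $V(\beta/\alpha(x))$, where $V(s):=\mathrm{Var}(\theta_s)$ is continuous in $s\in[0,1)$. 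It suffices to treat separately the deterministic average of the means and the random fluctuation around it.

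For the deterministic part, write
$$\frac{1}{n+1}\sum_{x=0}^n R\!\left(\frac{\beta}{\alpha(x)}\right)\;=\;\int_{[0,1]} R(\beta/a)\,d\nu_n(a),\qquad\nu_n:=\frac{1}{n+1}\sum_{x=0}^n\delta_{\alpha(x)}.$$
Assumption \ref{assumption_ergo} gives $\nu_n\Rightarrow Q_0(\alpha)$ weakly, and since $\beta<\inf\mathrm{supp}\,Q_0(\alpha)$ the map $a\mapsto R(\beta/a)$ is continuous and bounded on a neighbourhood of $\mathrm{supp}\,Q_0(\alpha)$; hence the integrals converge to $\overline{R}^{Q_0(\alpha)}(\beta)$, which equals $\rho$ by the choice of $\beta$. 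For the random part, independence and Chebyshev's inequality yield
$$\Prob\!\left(\left|\frac{1}{n+1}\sum_{x=0}^n\bigl[\eta^{\alpha,\rho}(x)-R(\beta/\alpha(x))\bigr]\right|>\varepsilon\right)\;\leq\;\frac{1}{\varepsilon^2(n+1)^2}\sum_{x=0}^n V(\beta/\alpha(x)),$$
which is $O(1/n)$ as soon as the variances $V(\beta/\alpha(x))$ are uniformly bounded in $x$. Combining both pieces gives the right-hand limit in \eqref{justify_mean}; the left-hand limit is proved identically using the second equality in \eqref{eq:assumption_ergo}.

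The main obstacle is the behaviour at ``slow sites'' where $\alpha(x)$ approaches or falls below $\inf\mathrm{supp}\,Q_0(\alpha)$, since $R(\beta/\cdot)$ and $V(\beta/\cdot)$ diverge as the argument tends to $\beta$. When $\beta<c=\inf_x\alpha(x)$ the ratios $\beta/\alpha(x)$ are uniformly bounded away from $1$ and means and variances are globally controlled; the argument is then immediate. The delicate case is $c\leq\beta<\inf\mathrm{supp}\,Q_0(\alpha)$ (possible in Example \ref{example_det}): here one truncates $R(\beta/\cdot)$ and $V(\beta/\cdot)$ at a level $M$, applies weak convergence of $\nu_n$ to the truncated integrals, and sends $M\to\infty$, using that the empirical density of sites with $\alpha(x)$ below $\inf\mathrm{supp}\,Q_0(\alpha)$ tends to zero along the sequence so that their contribution to both the mean and the variance is negligible in the $n\to\infty$ limit.
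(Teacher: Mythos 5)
You should first note that this paper does not actually prove Lemma \ref{lemma_inter}: it is quoted from \cite[Lemma 2.1]{bmrs3}, so there is no in-paper argument to compare with. That said, your core argument --- writing $\mu^{\alpha,\rho}=\mu^\alpha_\beta$ with $\beta=(\overline{R}^{Q_0(\alpha)})^{-1}(\rho)$, treating the Ces\`aro average of the site means via the empirical measures $\nu_n$ and Assumption \ref{assumption_ergo}, and killing the fluctuations by independence plus Chebyshev --- is the natural weak law of large numbers proof, and it is complete in the regime $\beta<c=\inf_x\alpha(x)$, i.e. $\rho<\overline{R}(c)$; in particular it covers every $\rho<\rho_c^\alpha$ whenever equality holds in \eqref{support_Q} (e.g. Example \ref{example_random}). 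One small repair: boundedness of $a\mapsto R(\beta/a)$ ``on a neighbourhood of ${\rm supp}\,Q_0$'' is not the right requirement, because the measures $\nu_n$ may charge points of $[c,\inf{\rm supp}\,Q_0)$ lying outside any such neighbourhood; what you need (and do have when $\beta<c$) is continuity and boundedness on $[c,1]$, which supports all the $\nu_n$ as well as $Q_0$, after which weak convergence applies.

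The genuine gap is in your ``delicate case'' $c\le\beta<\inf{\rm supp}\,Q_0$. If $\beta>c$, there are sites with $\alpha(x)<\beta$, and there the marginal $\theta_{\beta/\alpha(x)}$ in \eqref{def_mu_lambda_alpha} has divergent normalization (the series in \eqref{eq:theta-lambda} has radius of convergence $1$); by \eqref{cond_simple_bar} the measure $\mu^\alpha_\beta$ is simply not defined, so there is nothing for a truncation to act on. In the boundary case $\beta=c$ (infimum not attained), the justification you give --- the empirical density of sites with $\alpha(x)$ below $\inf{\rm supp}\,Q_0$ vanishes, hence their contribution to mean and variance is negligible --- is a non sequitur: along such slow sites the single-site means $R(\beta/\alpha(x))$ are unbounded, and a vanishing fraction of sites whose means diverge fast enough (e.g. $\alpha(x_n)-c$ decaying very rapidly in Example \ref{example_det}) can dominate the Ces\`aro average; weak convergence of $\nu_n$ gives no control whatsoever on this contribution, and no level-$M$ truncation argument can recover it without an extra quantitative assumption on how fast $\alpha$ approaches $c$. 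Indeed the paper's own Remark \ref{remark_failure} records precisely this failure mode of \eqref{justify_mean} at the boundary parameter. So the correct scope of the lemma is the case where $\beta$ stays strictly below the disorder values (where your uniform bounds already suffice), and the final truncation paragraph should be removed rather than patched.
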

\begin{remark}\label{remark_failure}
We did not extend the reindexation \eqref{invariant_density} up to $\rho=\rho_c^\alpha$. Indeed, in so doing, we would obtain
$\mu^{\alpha,\rho}:=\mu^\alpha_c$ for $\rho=\rho_c^{\alpha}$. With this definition, the limit \eqref{justify_mean} 
 may fail (see \cite[Section 2.2]{bmrs3} for details). 
\end{remark}
 In the sequel, we will simplify notation by setting $\rho_c=\rho_c^\alpha$, $\overline{R}=\overline{R}^{Q_0(\alpha)}$ and
$f=f^\alpha$. Let 
\be\label{limsupinf}
\rho^*(t,u):=\limsup_{(t',u')\to(t,u)}\rho(t',u'),\quad
\rho_*(t,u):=\liminf_{(t',u')\to(t,u)}\rho(t',u')
\ee
The  
results will be different whether one considers a point around which
the local hydrodynamic density is  subcritical, critical, or supercritical.   In the  first  case, a corresponding 
invariant measure exists, and we show in the following theorem that the microscopic distribution locally approaches this measure. 
\begin{theorem}\label{th_strong_loc_eq}
Under assumptions of Theorem \ref{th_hydro}, the following holds for every $(t,u)\in(0,+\infty)\times\R$:
let $\psi:{\mathbf X}\to\R$  
be a bounded local function, and $(x_N)_{N\in\N}$ a sequence  of sites 
such that  $u=\lim_{N\to+\infty}N^{-1}x_N$. Then:\\ \\
 (i) If $\psi$ is nondecreasing and $\rho_*(t,u)<\rho_c$, 
\be\label{eq:loc_eq_super_lower}
\lim_{N\to+\infty}\left[
\Exp 
\psi\left(
\tau_{x_N}\eta^{\alpha,N}_{Nt}
\right)-\int_{\mathbf X}\psi(\eta)d\mu^{
\tau_{x_N}\alpha,\rho_*(t,u)
}(\eta)
\right]^-=0
\ee
 (ii) If $\psi$ is nondecreasing
and $\rho^*(t,u)<\rho_c$, 
\be\label{eq:loc_eq_upper}
\lim_{N\to+\infty}\left[
\Exp 
\psi\left(
\tau_{x_N}\eta^{\alpha,N}_{Nt}
\right)-\int_{\mathbf X}\psi (\eta)d\mu^{
\tau_{x_N}\alpha,\rho^*(t,u)
}(\eta)
\right]^+=0
\ee
 (iii) If $\rho(.,.)$ is continuous at $(t,u)$ and $\rho(t,u)<\rho_c$, 
\be\label{eq:loc_eq}
\lim_{N\to+\infty}\left[
\Exp 
\psi\left(
\tau_{x_N}\eta^{\alpha,N}_{Nt}
\right)-\int_{\mathbf X}\psi (\eta)d\mu^{
\tau_{x_N}\alpha,\rho(t,u)
}(\eta)
\right]=0
\ee
\end{theorem}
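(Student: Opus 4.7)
The statement has three parts; (iii) follows from (i) and (ii) by noting that at a continuity point $\rho_*(t,u)=\rho^*(t,u)=\rho(t,u)$, and by decomposing a bounded local $\psi$ as $\psi=\psi_+-\psi_-$ with $\psi_\pm$ nondecreasing and of at most linear growth in each coordinate (for instance $\psi_+(\eta):=\psi(\eta)+K\sum_{j=1}^k \eta(x_j)$ with $K$ exceeding the oscillation of $\psi$, and $\psi_-(\eta):=K\sum_j \eta(x_j)$). Uniform integrability of the $\eta(x_j)$ under both $\mu^{\tau_{x_N}\alpha,\rho(t,u)}$ and the law of $\eta^{\alpha,N}_{Nt}$ — which follows from the geometric tails of $\theta_\beta$ for $\beta<1$ and from the stochastic domination results recalled in Section \ref{sec:preliminary} — extends the conclusion from bounded nondecreasing to general bounded local $\psi$. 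Parts (i) and (ii) are mirror images of one another, so I concentrate on (i).

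To establish (i), fix $\rho' \in [0,\rho_*(t,u))$. The plan is to show
\[
\liminf_{N\to\infty}\Bigl[\Exp\,\psi(\tau_{x_N}\eta^{\alpha,N}_{Nt})-\int\psi\,d\mu^{\tau_{x_N}\alpha,\rho'}\Bigr]\geq 0,
\]
and then to let $\rho'\uparrow\rho_*(t,u)$, invoking the weak stochastic continuity of $\beta\mapsto\mu^\alpha_\beta$ recorded in \eqref{stoch_inc}. The comparison is to be obtained by coupling $\eta^{\alpha,N}$ with a process $\tilde\eta^{\alpha,N}$ whose marginal law at time $Nt$ is $\mu^{\alpha,\rho'}$, exploiting that this measure is invariant (Subsection \ref{subsec:procinv}), so that $\tau_{x_N}\tilde\eta^{\alpha,N}_{Nt}\sim\mu^{\tau_{x_N}\alpha,\rho'}$ for every $N$. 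Since $\psi$ is nondecreasing and local, it then suffices to construct a basic (Harris) coupling under which $\tilde\eta^{\alpha,N}_{Nt}(y)\leq\eta^{\alpha,N}_{Nt}(y)$ on the support of $\tau_{x_N}\psi$, except on an event of probability $o(1)$.

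This domination will rest on three ingredients: (a) attractiveness, which propagates any $t=0$ pointwise inequality through the dynamics; (b) the hydrodynamic limit (Theorem \ref{th_hydro}), applied both to $\eta^{\alpha,N}$ (yielding the entropy profile $\rho(t,\cdot)$) and to $\tilde\eta^{\alpha,N}$ (yielding the constant profile $\rho'$), combined with the lower bound $\rho(t,u')\geq\rho'$ valid in a spacetime neighborhood of $(t,u)$ by the very definition of $\rho_*$ and by the $L^1$-stability of entropy solutions to \eqref{conservation_law}; and (c) a quenched spatially averaged (weak) local equilibrium estimate around $x_N$, which is upgraded to the pointwise statement by exploiting the genuine nonlinearity of $f$ asserted by Lemma \ref{lemma_properties_flux}(ii). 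Concretely, the comparison configuration at time $0$ will be constructed as a truncation of $\eta^N_0$ by an independent sample of $\mu^{\alpha,\rho'}$ inside a macroscopic window around the backward characteristic from $(t,u)$, and set to the empty configuration outside; finite macroscopic speed of mass transport (nearest-neighbour, asymmetric jumps) then ensures that modifying $\tilde\eta^{\alpha,N}_0$ outside this window does not influence $\tilde\eta^{\alpha,N}_{Nt}$ near $x_N$. Ingredient (c) is the technical heart of the argument, and where I expect the main obstacle to lie: in the disordered setting the translation-invariance trick of \cite{lan} is unavailable, and the strict-concavity input of \cite{bam} does not apply to the merely genuinely nonlinear flux produced by homogenization against $Q_0$, so the passage from spatial averages to pointwise behaviour must rely on genuine nonlinearity alone.

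Part (ii) is proved by the mirror construction: fix $\rho''\in(\rho^*(t,u),\rho_c)$, couple with a process started from the stationary measure $\mu^{\alpha,\rho''}$, use attractiveness in the reverse direction to obtain $\tilde\eta^{\alpha,N}_{Nt}(y)\geq\eta^{\alpha,N}_{Nt}(y)$ on the support of $\tau_{x_N}\psi$ up to an event of vanishing probability, and let $\rho''\downarrow\rho^*(t,u)$ via \eqref{stoch_inc}. Combining (i) and (ii) at a continuity point of $\rho(\cdot,\cdot)$ yields (iii).
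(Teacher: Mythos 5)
Your high-level strategy for (i)--(ii) — couple with stationary processes at densities just below $\rho_*(t,u)$ and just above $\rho^*(t,u)$, use monotonicity of $\psi$ and \eqref{stoch_inc} to pass to the limit, and exploit genuine nonlinearity of $f$ — is indeed the paper's strategy, but the step you defer as ``ingredient (c)'' is not a technical detail to be expected elsewhere: it is the entire content of the proof, and your sketch of how to obtain the domination is internally inconsistent. You want a comparison process $\widetilde\eta^{\alpha,N}$ that simultaneously (a) satisfies a pointwise inequality with $\eta^{\alpha,N}$ at time $0$, so that attractiveness propagates it, and (b) has exact law $\mu^{\alpha,\rho'}$ at time $Nt$. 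You cannot have both: if you truncate or modify the initial data (``truncation of $\eta^N_0$ by an independent sample of $\mu^{\alpha,\rho'}$'') so that an ordering holds at time $0$, the comparison process is no longer stationary and its time-$Nt$ marginal is not $\mu^{\alpha,\rho'}$; if you keep it stationary, no initial ordering holds (the initial data $\eta^N_0$ is an arbitrary configuration with a density profile, possibly deterministic and far from equilibrium), and attractiveness alone yields nothing. This is precisely why the paper proves Proposition \ref{prop_nomore_disc}: coupling $\eta^{\alpha,N}$ with the \emph{untruncated} stationary processes $\xi^{\alpha,\rho_0}$, $\xi^{\alpha,\rho_3}$ through the same Harris system, it shows that all discrepancies of the wrong sign are destroyed by coalescence in a macroscopic window around $Nu$ by time $Nt$. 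That requires the truncation at time $Nt_0$ plus finite propagation (Proposition \ref{prop_nomore_disc_2}), the $o(N)$ discrepancy bound obtained by iterating the coalescence-rate estimate of Proposition \ref{prop_evol_disc_finite}, the decomposition of $\eta$ particles into classes (Lemma \ref{lemma_classes}), the current estimates giving distinct class velocities $v_2\neq v_3$ (Lemmas \ref{lemma_current_eta}--\ref{lemma_mut2}, where genuine nonlinearity enters through \eqref{speeds}), and the coalescence counting of Corollary \ref{cor_mut1}. None of this appears in your proposal, and your framing of (c) as ``a spatially averaged weak local equilibrium estimate upgraded to pointwise'' does not match any argument you actually give; so the proof has a genuine gap at its core.

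A secondary problem concerns your reduction of (iii) to (i)--(ii). The decomposition $\psi=\psi_+-\psi_-$ with $\psi_\pm(\eta)=\psi(\eta)+K\sum_j\eta(x_j)$ (resp.\ $K\sum_j\eta(x_j)$) produces \emph{unbounded} nondecreasing functions, to which statements (i)--(ii) as formulated do not apply; extending them requires uniform integrability of $\eta^{\alpha,N}_{Nt}(x_j)$ under the law of the out-of-equilibrium process, which is not among the ``stochastic domination results recalled in Section \ref{sec:preliminary}'' and is not obviously available when $\eta^N_0$ is an arbitrary configuration with an $L^\infty$ profile. The paper avoids this entirely: on the event where the two-sided sandwich $\xi^{\alpha,\rho_0}_{Nt}\leq\eta^{\alpha,N}_{Nt}\leq\xi^{\alpha,\rho_3}_{Nt}$ holds locally, the bound \eqref{lipsi} is applied to the \emph{equilibrium} configurations only, whose moments are controlled by \eqref{mean_density}, and $\rho_3-\rho_0$ is taken small; this proves (iii) for general bounded local $\psi$ without any moment bound on the process itself.
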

Statement \eqref{eq:loc_eq} differs from the usual strong local equilibrium 
statement in the sense that there is no limiting measure: rather, 
the microscopic distribution around some macroscopic point is close to 
an equilibrium distribution which itself varies along the disorder.\\ \\
The analogue of \cite{lan} in our context, that is {\em conservation} of local equilibrium,  corresponds to the following particular case of Theorem \ref{th_strong_loc_eq}.
\begin{example}\label{ex_cons}
Assume $\rho_0(.)$ takes values in $[0,\rho_c)$, and $\eta^N_0\sim\mu^N_0$, where
\be\label{def_init_loceq}
\mu^N_0:=\bigotimes_{x\in\Z}\theta^{\alpha,\rho^N_x},
\ee
and $(\rho^N_x:\,x\in\Z,\,N\in\N\setminus\{0\})$ is a  $[0,\rho_c)$-valued family such that
\be\label{cond_init_loceq}
\lim_{N\to+\infty}\rho^N_{\lfloor N.\rfloor}=\rho_0(.),\quad\mbox{in }L^1_{\rm loc}(\R)
\ee
\end{example}
The difference with {\em creation of local equilibrium} is that the latter result only assumes that the sequence $(\eta^N_0)_{N\geq 1}$
has a limiting density profile as in Theorem \ref{th_hydro}.
Previous works \cite{av,lan} established \eqref{eq:loc_eq} starting from random initial configurations of the form \eqref{def_init_loceq}--\eqref{cond_init_loceq} for the {\em homogeneous} asymmetric zero-range process, that corresponds to the uniform environment $\alpha(x)\equiv 1$.
In this case, $\rho_c=+\infty$. Both works require the additional assumption that the flux function $f$ is strictly concave. In \cite{av}, conservation of local equilibrium is established directly for initial data of the form
\be\label{riemann_data}
\rho_0(x)=\rho_l{\bf 1}_{(-\infty,0)}(x)+\rho_r{\bf 1}_{(0,+\infty)}(x)
\ee
that is the so-called {\em Riemann problem} for \eqref{conservation_law}.
In \cite{lan}, the result is established for  general initial data in any dimension. The approach of \cite{lan} consists in deriving conservation of local equilibrium in the strong sense \eqref{eq:loc_eq} from conservation of local equilibrium in the {\em weak sense} previously established (together with the hydrodynamic limit \eqref{conservation_law}) in \cite{rez}. In \cite{bfl}, the hydrodynamic limit was derived in any dimension for the asymmetric  zero-range process with i.i.d. site disorder, starting from subcritical initial distributions of the form \eqref{cond_init_loceq}, but the question of local equilibrium was left open. The problem of convergence from given initial configurations was formulated in \cite{bfl} and is addressed in Theorem \ref{cor_dream_bfl} using our local equilibrium results (Theorems \ref{th_strong_loc_eq} above and \ref{th_strong_loc_eq_super}) as the main ingredient. \\ \\
Next, we consider points at which the hydrodynamic density is supercritical 
 or critical. In the first case,  there does not exist a corresponding equilibrium measure.
Therefore one cannot expect the same type of convergence as above. We show that in this case, 
one has local convergence to the {\em critical} quenched invariant  measure. The nature of 
this phenomenon is different from the result of Theorem \ref{th_strong_loc_eq}, and motivates 
an additional assumption that we now introduce. 
\begin{definition}\label{def_typical}
Let $(x_N)_{N\in\N}$ be a sequence of sites such that $N^{-1}x_N$ converges to $u\in\R$ as $N\to+\infty$.  
The sequence $(x_N)_{N\in\N}$ is {\em typical} if and only if 
any subsequential limit $\overline{\alpha}$ of 
 $(\tau_{x_N}\alpha)_{N\in\N}$ 
has the following properties:\\ \\
(i) For every $z\in\Z$, 
$\overline{\alpha}\in{\bf B}:=(c,1]^\Z$.\\ 
(ii) 
$\liminf_{z\to-\infty}\overline{\alpha}(z)=c$.
\end{definition}
 The interpretation of the word ``typical'' is the following. When $\alpha$ does not achieve its infimum value $c$ (that is $\alpha\in\bf B$),
(i)--(ii) says that the environment as seen from $x_N$ shares  key properties of the environment 
seen from a fixed site (say the origin). Indeed, (i) means that one has to look 
infinitely faraway to see slow sites 
or never sees them, and (ii) means
that one eventually does encounter such sites.  Of course, when $\alpha$ does achieve its infimum, assumption (i) 
is no longer true for the environment seen from the origin. However, we point out that assumption (i)  could be removed at the expense of a slightly longer proof, that we omit  for simplicity.  

The typicality assumption will in fact be needed only to show that the  
microscopic distribution is locally dominated by the critical measure.
We observe however that even in the critical case, where the invariant measure does exist, 
we are not able to prove the statement without the typicality assumption. 
\begin{theorem}\label{th_strong_loc_eq_super}
Under assumptions  of Theorem \ref{th_hydro}, 
the following holds for every $(t,u)\in(0,+\infty)\times\R$.
Let $\psi:{\mathbf X}\to\R$ 
be a bounded local function, and $(x_N)_{N\in\N}$ a typical sequence  of sites 
such that  $u=\lim_{N\to+\infty}N^{-1}x_N$. Then: \\ \\
(i) If $\psi$ is nondecreasing, 
\be\label{eq:loc_eq_super}
\lim_{N\to+\infty}\left[
\Exp 
\psi\left(
\tau_{x_N}\eta^{\alpha,N}_{Nt}
\right)-
\int_{\overline{\mathbf X}}\psi(\eta)d\mu_c^{
\tau_{x_N}\alpha
}(\eta)
\right]^+=0
\ee
(ii) If  $\rho_*(t,u)\geq\rho_c$, then
\be\label{eq:loc_eq_super_bis}
\lim_{N\to+\infty}\left[
\Exp 
\psi\left(
\tau_{x_N}\eta^{\alpha,N}_{Nt}
\right)-
\int_{\overline{\mathbf X}}\psi(\eta)d\mu_c^{
\tau_{x_N}\alpha
}(\eta)
\right]=0
\ee
\end{theorem}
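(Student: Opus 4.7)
The plan is to deduce (ii) from (i) together with Theorem~\ref{th_strong_loc_eq}(i) and attractiveness, and to prove (i), the genuinely new statement, in two stages: first a mesoscopic/averaged upper bound (Section~\ref{sec:proof_loc_eq_2}), then a Cesaro/attractiveness upgrade to the pointwise statement (Section~\ref{sec:cesaro}).

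For (ii), I would fix $\beta\in(0,c)$ and thin the initial configuration (using the basic coupling from Subsection~\ref{subsec:Harris}) to produce $\xi_0^N\le\eta_0^N$ with limiting density profile $\tilde\rho_0^\beta:=\rho_0\wedge\overline{R}(\beta)$, which is strictly subcritical. The corresponding entropy solution $\tilde\rho^\beta$ then takes values in $[0,\overline{R}(\beta)]$, so Theorem~\ref{th_strong_loc_eq}(i) applies to the coupled process $\xi^N$. Combined with attractiveness this gives
$$\Exp\psi\bigl(\tau_{x_N}\eta^{\alpha,N}_{Nt}\bigr)\geq\Exp\psi\bigl(\tau_{x_N}\xi^{N}_{Nt}\bigr)\geq\int_{\mathbf X}\psi\,d\mu^{\tau_{x_N}\alpha,\,\tilde\rho^\beta_*(t,u)}+o(1).$$
Letting $\beta\uparrow c$, the hypothesis $\rho_*(t,u)\geq\rho_c$ together with $L^1$-contraction of the entropy solution in its initial data give $\tilde\rho^\beta_*(t,u)\uparrow\rho_c$; by \eqref{stoch_inc}, $\mu^{\tau_{x_N}\alpha,\rho}\uparrow\mu_c^{\tau_{x_N}\alpha}$ weakly as $\rho\uparrow\rho_c$. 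This produces a lower bound matching the upper bound from (i), proving (ii) for nondecreasing $\psi$; the general bounded local case follows from a density argument in the determining class of monotone local functions.

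Part (i) is the heart of the matter. The first (averaged) stage asserts that for any $\epsilon>0$ and any scale $\ell_N=o(N)$, the probability that the window $[x_N-\ell_N,\,x_N+\ell_N]$ at time $Nt$ holds more than $(\rho_c+\epsilon)(2\ell_N+1)$ particles tends to zero. The heuristic is that the hydrodynamic flux is capped at $(p-q)c$, so currents through mesoscopic blocks cannot sustain density above $\rho_c$ at equilibrium; excess mass accumulates near slow sites, which by the typicality assumption on $x_N$ (Definition~\ref{def_typical}) are macroscopically far from $x_N$, while the positive drift $p-q>0$ evacuates any local excess on the hyperbolic scale $N$. The second (Cesaro) stage upgrades the averaged statement to the pointwise~\eqref{eq:loc_eq_super} by combining attractiveness with the dense set of strictly convex/concave densities provided by Lemma~\ref{lemma_properties_flux}(ii), which serves as a local substitute for the global convexity/concavity hypothesis of \cite{kos,bam}.

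The main obstacle is precisely the averaged upper bound. Without global strict convexity or concavity of $f$, the a priori two-block estimate of \cite{kos} is unavailable, and the substitute must extract information from the genuine nonlinearity of $f$ (Lemma~\ref{lemma_properties_flux}(ii)). I expect a contradiction argument: a sustained local density above $\rho_c$ would, via an entropy-inequality argument exploiting strict monotonicity of $f$ on $[0,\rho_c]$ near a generic subcritical density, force a net flux exceeding $(p-q)c$, contradicting the hydrodynamic bound. The typicality assumption on $x_N$ is essential throughout, both to ensure that no slow site can store the excess mass locally and to guarantee that $\mu_c^{\tau_{x_N}\alpha}$ varies weakly continuously along the relevant subsequential limits of $(\tau_{x_N}\alpha)_N$.
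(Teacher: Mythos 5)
There are genuine gaps in both halves of your plan. For (i), the statement you propose to prove in your ``averaged stage'' -- that a mesoscopic window around $x_N$ cannot carry density above $\rho_c+\epsilon$ -- is much weaker than \eqref{eq:loc_eq_super}, which asserts that the \emph{local law} of $\tau_{x_N}\eta^{\alpha,N}_{Nt}$ is asymptotically dominated by the site-dependent product measure $\mu_c^{\tau_{x_N}\alpha}$; a density bound on a window of size $o(N)$ does not control $\Exp\,\psi(\tau_{x_N}\eta^{\alpha,N}_{Nt})$ for a monotone local $\psi$, and your proposed upgrade (``Cesaro/attractiveness plus Lemma \ref{lemma_properties_flux}(ii)'') has no mechanism behind it: Lemma \ref{lemma_properties_flux}(ii) and the genuine-nonlinearity/coalescence machinery are used in the \emph{subcritical} proof (Proposition \ref{prop_evol_disc_finite}), and Section \ref{sec:cesaro} proves the separate time-averaged Theorem \ref{prop_strong_loc_eq_super_cesaro}, not a step of (i). You have also inverted the role of typicality: condition (ii) of Definition \ref{def_typical} is not about slow sites being ``far away''; it guarantees that the limiting environment $\overline{\alpha}$ \emph{does} have sites with rate arbitrarily close to $c$ to the left, and the paper exploits exactly this to dominate $\tau_{x_N}\eta^{\alpha,N}$ by a finite open Jackson network between a slow left reservoir at $l=A_\varepsilon(\overline{\alpha})$ and a (modified) right reservoir, whose explicit product invariant measure $\mu^N(\varepsilon)$ converges to $\mu_c^{\overline{\alpha}}$ (Lemmas \ref{lem-mu_c'} and \ref{lemma_new_trunc}); the point is then that this fixed-size positive recurrent system couples with its stationary version in time $o(N)$, which is what yields the distributional upper bound. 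Your flux-capping contradiction argument, even if completed, would only produce the density bound, not the domination by $\mu_c^{\tau_{x_N}\alpha}$.

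For (ii), the step $\tilde\rho^\beta_*(t,u)\uparrow\rho_c$ is unjustified: $L^1$-contraction (or monotone convergence of the truncated data $\rho_0\wedge\overline{R}(\beta)$) controls the solutions only in $L^1_{\rm loc}$, and a pointwise $\liminf$ at $(t,u)$ is not continuous under such convergence; monotonicity even gives the inequality in the wrong direction ($\tilde\rho^\beta_*(t,u)\le(\rho\wedge\rho_c)_*(t,u)$ tells you nothing from below). The paper sidesteps this entirely: since $\rho_*(t,u)\ge\rho_c$, for small $\Delta$ the local infimum $\rho_1$ of the \emph{original} solution over the space-time window exceeds any $\rho_0<\rho_c$ with $\rho_0\in\mathcal R$ close to $\rho_c$, so the coupling event \eqref{coupling_2} of Proposition \ref{prop_nomore_disc} applies directly to give the matching lower bound $\xi^{\alpha,\rho_0}_{Nt}\le\tau_{x_N}\eta^{\alpha,N}_{Nt}$ locally with high probability, after which one squeezes against the upper bound from (i) and lets $\rho_0\uparrow\rho_c$ using \eqref{stoch_inc} and condition (i) of typicality. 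If you want to salvage your thinning route you would need a genuine PDE lemma (e.g.\ that $\rho\wedge\rho_c$ is again an entropy solution for a flux constant above $\rho_c$, together with a uniform-in-$\beta$ lower control of $\tilde\rho^\beta$ near $(t,u)$), none of which is supplied by the contraction property you invoke.
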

\begin{remark}\label{remark_strong_loc_eq_super}
 (i) For a typical sequence $(x_N)_{N\in\N}$,  \eqref{eq:loc_eq_super} 
 is true regardless of the values \eqref{limsupinf}. But if $\rho^*(t,u)<\rho_c$,  
 \eqref{eq:loc_eq_upper} is more precise and does not require the typicality assumption.\\ 
(ii)  It is possible to give an alternative formulation of 
\eqref{eq:loc_eq_super_bis}  that looks more like a ``usual'' 
local equilibrium statement in the sense that it is an actual convergence result. 
To this end, we consider subsequences of $(x_N)_{N\in\N}$ such that the shifted 
environment  $\tau_{x_N}\alpha$ 
has a limit $\overline{\alpha}$ (which, by assumption, satisfies 
conditions (i)-(ii) of Definition \ref{def_typical}). Then, equivalent to \eqref{eq:loc_eq_super_bis},
is the statement that along every such subsequence, 
\be\label{eq:loc_eq_super_ter}
\lim_{N\to+\infty}
\Exp 
\psi\left(
\tau_{x_N}\eta^{\alpha,N}_{Nt}
\right)=
\int_{\mathbf X}\psi(\eta)d\mu_c^{
\overline{\alpha}
}(\eta)
\ee
\end{remark}
The most important requirement in Definition \ref{def_typical} is (ii). 
Suppose for instance that the environment $\alpha$ is i.i.d., 
and its marginal distribution takes value $1$ with positive probability. 
Then it is a.s. possible to find sequences $(x_N)_{N\in\N}$, $(y_N)_{N\in\N}$ 
and $(z_N)_{N\in\N}$  (depending on the realization of the environment) such that 
\begin{eqnarray*}
&&\lim_{N\to+\infty}N^{-1}x_N=u, \quad y_N<x_N<z_N,\\
&&\alpha(x)=1\quad\mbox{for every  }x\in[y_N,z_N],\mbox{ and}\\
&&\!\!\!\!\!\! \lim_{N\to+\infty}(x_N-y_N)=\lim_{N\to+\infty}(z_N-x_N)=+\infty
\end{eqnarray*}
 In this case, assumption (ii) is violated because $\overline{\alpha}$ 
is the homogeneous environment given by $\overline{\alpha}(x)=1$ for all $x\in\Z$, 
hence $\liminf_{z\to-\infty}\overline{\alpha}(z)=1$.
Thus, by condition \eqref{cond_simple_bar}, the invariant measure 
$\mu^{\overline{\alpha}}_\beta$ in \eqref{eq:loc_eq_super_bis} exists not only 
for $\beta\in[0,c]$ as in the typical (in the sense of Definition \ref{def_typical}) 
situation, but for every $\beta\in[0,1)$. These measures cover the whole range 
of mean densities $\rho\in[0,+\infty)$ and we might imagine that the distribution 
around $x_N$ approaches the measure $\mu^{\overline{\alpha}}_\beta$ corresponding 
to the hydrodynamic density $\rho(t,u)$.\\ \\
To discard such behavior, we would need to control how fast site $x_N$ would ``see'' 
slow sites on the left, which eventually impose a limiting distribution with density 
$\rho_c$ corresponding to a maximal current value $(p-q)c$. Without the above typicality 
assumption, we are not able to prove such a statement at given times, but we can obtain 
a weaker time-integrated result. 
\begin{theorem}\label{prop_strong_loc_eq_super_cesaro}
 Under assumptions and notations of Theorem \ref{th_hydro}, the following holds.
Let $t>0$, and  $(x_N)_{N\in\N}$ be an {\em arbitrary} sequence of sites such that $\lim_{N\to+\infty}N^{-1}x_N=u$, where $u\in\R$ 
 is such that $\rho_*(t,u)\geq\rho_c$.   Let $\psi:\overline{\bf X}\to\R$ be a continous local function. Then
\be\label{eq:loc_eq_super_cesaro}
\lim_{\delta\to 0}\limsup_{N\to+\infty}\left|
\frac{1}{\delta}\int_{t-\delta}^{t}\Exp 
\psi\left(
\tau_{x_N}\eta^{\alpha,N}_{Nt}
\right)dt-\int_{\overline{\mathbf X}}\psi(\eta)d\mu^{
\tau_{x_N}\alpha
}_c(\eta)
\right|=0
\ee
\end{theorem}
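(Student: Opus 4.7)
The plan is to prove Theorem~\ref{prop_strong_loc_eq_super_cesaro} by separating the two one-sided estimates and using the time integration as a substitute for the missing typicality of $(x_N)$. As a first reduction, I would write any bounded continuous local $\psi:\overline{\mathbf X}\to\R$ as a difference of bounded nondecreasing local functions: since $\psi$ depends on finitely many coordinates and $\overline{\N}^W$ is compact, adding a large multiple of $\sum_{y\in W}\min(\eta(y),K)$ to $\psi$ produces a nondecreasing function, while the truncation error vanishes as $K\to\infty$ thanks to an a priori tail bound on the expected number of particles at each site both under $\Exp$ and under $\mu_c^{\tau_{x_N}\alpha}$. It therefore suffices to prove both one-sided versions of \eqref{eq:loc_eq_super_cesaro} for nondecreasing bounded local $\psi$.

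For the upper bound, I would remove the typicality assumption of Theorem~\ref{th_strong_loc_eq_super}(i) by an attractive coupling in the Harris graphical representation. Build a stationary auxiliary process $\xi^\alpha_s$ of law $\mu_c^\alpha$ and dominate the initial configuration $\eta_0^N$, truncated at some high level $K$, by $\xi_0^\alpha$ augmented with a finite extra mass localized to the left of $x_N$. By attractiveness, $\eta^{\alpha,N}_s$ stays below the coupled process. The extra mass has drift $p-q>0$ and is pushed out of the support of $\tau_{x_N}\psi$ within macroscopic time $o(\delta)$, so that for $s$ in the bulk of $[t-\delta,t]$ the local distribution of $\tau_{x_N}\eta^{\alpha,N}_{Ns}$ is stochastically dominated by $\mu_c^{\tau_{x_N}\alpha}$; integration in $s$ then yields the upper bound.

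The lower bound is the heart of the proof and is where the time averaging plays its essential role. From $\rho_*(t,u)\geq\rho_c$ and lower semicontinuity of $\rho_*$, for every $\varepsilon>0$ and all sufficiently small $\delta,r>0$,
\[
\frac{1}{2r\delta}\int_{t-\delta}^{t}\int_{u-r}^{u+r}\rho(s,v)\,dv\,ds\geq\rho_c-\varepsilon.
\]
Theorem~\ref{th_hydro} transfers this into a microscopic space-time density lower bound on a window of width $Nr$ around $x_N$ and time interval $[N(t-\delta),Nt]$. I would then couple $\eta^{\alpha,N}$ with a stationary process $\zeta^{\alpha,N}$ of law $\mu^{\alpha,\rho_c-2\varepsilon}$ and track the discrepancies, which behave as second-class particles under the basic coupling. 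The space-time density lower bound prevents the excess $(\zeta-\eta)^+$ from surviving in the Cesaro average over the window, which forces $\eta\geq\zeta$ in the time-averaged sense around $x_N$; passing $\varepsilon\to 0$ and using the weak continuity $\mu^{\alpha,\rho_c-2\varepsilon}\to\mu_c^\alpha$ from \eqref{stoch_inc} gives the matching lower bound. The main obstacle is precisely this discrepancy-control step: converting a bulk density lower bound into a stochastic lower bound on the time-averaged local distribution requires exploiting attractiveness to rule out accumulation of negative discrepancies near $x_N$, and identifying the exact product measure $\mu_c^{\tau_{x_N}\alpha}$ — rather than merely some measure with the correct mean density — as the Cesaro limit, which ultimately relies on the fact that any invariant measure for $L^\alpha$ with the correct asymptotic density profile is necessarily of the form $\mu^\alpha_\beta$.
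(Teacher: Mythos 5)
Your upper bound is the step that fails, and it fails for the very reason this theorem is delicate. A configuration $\eta^N_0$ with supercritical limiting profile cannot be written as a sample of $\mu^\alpha_c$ plus a \emph{finite} amount of extra mass to the left of $x_N$: the excess over any fixed realization $\xi^\alpha_0\sim\mu^\alpha_c$ is of order $N$ in every macroscopic window (and a product measure has infinitely many empty sites anyway), so the coupled discrepancies are macroscopically many. Moreover there is no mechanism flushing them past the support of $\tau_{x_N}\psi$ in time $o(N\delta)$: the discrepancies are not random walks with drift (only their skeletons are $(p,q)$ walks, their jump rates are configuration- and disorder-dependent), and, more fundamentally, the current through any site is capped by $(p-q)c$, which is exactly the stationary critical throughput, so there is no spare capacity to evacuate an $O(N)$ excess on the hydrodynamic time scale --- this is precisely the condensation phenomenon the theorem is about. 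This is why the paper never attempts a direct stochastic domination by $\mu_c$ for an arbitrary sequence $(x_N)$: for typical sequences it uses the truncated Jackson-network comparison of Subsections \ref{subsec:jackson}--\ref{subsec:truncation}, and for arbitrary sequences (the present theorem) it replaces the upper bound by a flux identity. Concretely, the paper shows that any subsequential limit $\nu$ of the Cesaro averages $\nu^{N,\delta}_t$ is invariant for $L^{\overline\alpha}$ (Proposition \ref{prop:inv_env}(i)), dominates $\mu^{\overline\alpha}_c$ (part (ii), which indeed comes from the subcritical comparison \eqref{coupling_2}, valid without typicality --- this is the rigorous version of your lower bound), and satisfies $\overline\alpha(x)\int g(\eta(x))d\nu=c$ (part (iii)); the last point is obtained from the compensator Lemma \ref{lemswitch} together with the current upper bound of Lemma \ref{lemflu}, which uses the macroscopic density of slow sites (Assumption \ref{assumption_dense} via Lemma \ref{lemma_ergodic}) and the source estimate of Proposition \ref{current_source}. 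Your proposal contains no substitute for this current/flux input, and it is exactly what the time averaging buys: the Cesaro limit is invariant, and the flux pins it down.

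The second gap is the identification of the limit. You invoke the claim that any invariant measure for $L^{\overline\alpha}$ with the right asymptotic density must be some $\mu^{\overline\alpha}_\beta$; this is not proved in the paper, is not quotable, and is a delicate statement for disordered zero-range processes. The paper avoids it: from $\nu\geq\mu^{\overline\alpha}_c$, equality of the fluxes, invariance of both measures, Strassen's coupling, and the strict monotonicity Lemma \ref{lemma_slight_problem} (needed because $g$ is only nondecreasing, so equality of $\int g(\eta(x))\,d\nu$ and $\int g(\eta(x))\,d\mu^{\overline\alpha}_c$ alone does not force equality of the measures), it concludes $\nu=\mu^{\overline\alpha}_c$. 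Without either this argument or a proof of your characterization of invariant measures, your lower bound only yields a limit point lying above $\mu^{\overline\alpha}_c$ with critical mean flux, not the measure itself. (The preliminary reduction of a continuous local $\psi$ to monotone functions also needs care near $+\infty$-valued coordinates, but that is a minor technicality compared with the two gaps above.)
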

An illuminating particular case of the above theorems, outlined in the introduction, is when the initial datum is uniform.
\begin{example}\label{ex_ill}
Assume 
\be\label{initial_uniform}\rho_0(x)\equiv\rho\ee
for some $\rho\geq 0$. Then $\rho(t,x)\equiv\rho$ for all $t>0$.
Specializing Theorems \ref{th_strong_loc_eq} and \ref{th_strong_loc_eq_super} to $u=0$, we obtain that $\eta_{Nt}^{\alpha,N}$ converges in distribution to $\mu^{\alpha,\rho}$ if $\rho<\rho_c$, or to $\mu_c^\alpha$ if $\rho\geq\rho_c$. We may in particular achieve \eqref{initial_uniform} as follows by a sequence of initial configurations $\eta^N_0=\eta_0$ independent of $N$.\\ \\
{\em (i) Stationary initial state.} Let $\eta^N_0=\eta_0\sim\mu^{\alpha,\rho}$, with $\rho<\rho_c$. Since $\mu^{\alpha,\rho}$ is an invariant measure 
for the process with generator \eqref{generator}, for every $t>0$, we have $\eta^{\alpha,N}_{Nt}\sim\mu^{\alpha,\rho}$.  As a result, the expression between brackets in \eqref{eq:loc_eq} vanishes for every $N\in\N\setminus\{0\}$ and $t\geq 0$. In this situation, we speak of {\em conservation} of local equilibrium (since \eqref{eq:loc_eq} already holds for $t=0$), but there is in fact nothing to prove, since this conservation follows form stationarity.\\ \\
{\em (ii) Deterministic initial state.} Let $\rho\geq 0$, and $\eta^N_0=\eta_0$, where $\eta_0\in{\bf X}$ is such that
\be\label{condition_eta_strong}
\lim_{n\to+\infty}\frac{1}{n}\sum_{x=-n}^0\eta_0(x)=
\lim_{n\to+\infty}\frac{1}{n}\sum_{x=0}^n\eta_0(x)=\rho
\ee
Condition \eqref{condition_eta_strong} implies that the sequence $(\eta^N_0)_{N\in\N\setminus\{0\}}$ has density profile $\rho_0(x)\equiv\rho$.
Note that this condition is similar to the one introduced in \cite{bam}  to prove convergence of the asymmetric nonzero-mean exclusion process to the product Bernoulli measure with mean density $\rho$.
\end{example}
Pushing the analysis further, 
  we can derive from Theorems \ref{th_strong_loc_eq} and \ref{th_strong_loc_eq_super} the following convergence result, with a weaker condition than \eqref{condition_eta_strong}.
\begin{theorem}\label{cor_dream_bfl}
Let $\eta_0\in{\mathbf X}$ be such that,  for some $\rho>0$, 
\be\label{condition_eta}
\lim_{n\to+\infty}\frac{1}{n}\sum_{x=-n}^0\eta_0(x)=\rho
\ee
Then $\eta_t^\alpha$ converges in distribution as $t\to+\infty$ to $\mu^{\alpha,\rho\wedge\rho_c}$.
\end{theorem}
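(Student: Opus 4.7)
The plan is to reduce the $t\to+\infty$ convergence to the hydrodynamic local equilibrium results of Theorems \ref{th_strong_loc_eq} and \ref{th_strong_loc_eq_super} applied at the macroscopic point $(T,0)$ for each fixed $T>0$. Setting $\eta_0^N:=\eta_0$ for every $N\in\N\setminus\{0\}$, we have $\eta^{\alpha,N}_{NT}=\eta^\alpha_{NT}$. Showing that $\Exp[\psi(\eta^\alpha_{NT})]\to\int\psi\,d\mu^{\alpha,\rho\wedge\rho_c}$ as $N\to+\infty$ for every $T>0$ and every bounded local $\psi$, combined with the Lipschitz continuity of $t\mapsto\Exp[\psi(\eta^\alpha_t)]$ that follows from applying the bounded generator \eqref{generator} to a local function, then extends the convergence to all real $t\to+\infty$ by approximating $t$ by the nearest multiple of $T$ and letting $T\to 0^+$.

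The main obstruction is that the constant sequence $(\eta_0^N)=(\eta_0)$ need not possess a limiting density profile in $L^\infty(\R)$, since \eqref{condition_eta} only controls the left half-line. I circumvent this by sandwiching from below: set $\eta_0^-(x):=\eta_0(x)\mathbf{1}_{x\le 0}$, which by \eqref{condition_eta} has limiting density profile $\rho_0^-(x)=\rho\mathbf{1}_{x<0}\in L^\infty(\R)$. The entropy solution of \eqref{conservation_law} from $\rho_0^-$ is a right-moving Riemann fan, all of whose wave speeds lie in $[0,f'(0)]$; in particular $\rho(T,0)=\rho$ with continuity at $(T,0)$ when $\rho<\rho_c$, while $\rho_*(T,0)=\rho_c$ when $\rho\ge\rho_c$. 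Applying Theorem \ref{th_strong_loc_eq}(iii) in the subcritical case, or Theorem \ref{th_strong_loc_eq_super}(ii) in the critical/supercritical case (using that the constant sequence $x_N\equiv 0$ is typical in the sense of Definition \ref{def_typical} under Assumption \ref{assumption_dense}), to the process $(\eta^{\alpha,-}_t)_{t\ge 0}$ started from $\eta_0^-$, we obtain
$$\lim_{N\to+\infty}\Exp[\psi(\eta^{\alpha,-}_{NT})]=\int\psi\,d\mu^{\alpha,\rho\wedge\rho_c}$$
for every bounded local $\psi$. Since $\eta_0^-\le\eta_0$, the basic coupling introduced in Subsection \ref{subsec:Harris} preserves $\eta^{\alpha,-}_t\le\eta^\alpha_t$ for all $t\ge 0$, yielding $\liminf_N\Exp[\psi(\eta^\alpha_{NT})]\ge\int\psi\,d\mu^{\alpha,\rho\wedge\rho_c}$ for every bounded nondecreasing local $\psi$.

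The matching upper bound requires controlling the discrepancy $\zeta_t:=\eta^\alpha_t-\eta^{\alpha,-}_t\ge 0$ produced by the basic coupling. Initially $\zeta_0$ is supported on $\{x>0\}$, and the strict asymmetry $p>q$ should cause the second-class particles carried by $\zeta$ to drift rightward, making $\zeta_t$ locally negligible around the origin. I would formalize this using the current identities recalled in Section \ref{sec:preliminary}: the net leftward $\zeta$-current through the origin is dominated by the difference of the microscopic $\eta$- and $\eta^-$-currents, both of which converge, via local equilibrium, to the common macroscopic value $(p-q)(\overline{R})^{-1}(\rho\wedge\rho_c)$ at the origin. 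Consequently, for every finite window $W\subset\Z$ and every $\epsilon>0$,
$$\lim_{N\to+\infty}\Prob\Big(\sum_{x\in W}\zeta_{NT}(x)>\epsilon\Big)=0,$$
so that $\eta^\alpha_{NT}$ and $\eta^{\alpha,-}_{NT}$ share the same distributional limit. Combined with the lower bound and the $t\to+\infty$ upgrade, this proves the theorem.

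The hard part is the rigorous verification of this escape. In AZRP with a general bounded rate $g$, the discrepancy dynamics is nontrivially coupled to both the first-class configuration and to the environment, so the escape cannot be read off from simpler comparisons (as for the $M/M/1$ model via last-passage percolation). Establishing it rigorously requires the flux and coupling estimates developed in Section \ref{sec:preliminary}, together with the local-equilibrium convergence just obtained for $\eta^{\alpha,-}$.
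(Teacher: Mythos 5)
Your handling of the truncated configuration $\eta_0^-=\eta_0\mathbf{1}_{\{x\le 0\}}$ is essentially the paper's own first step: the Riemann profile $\rho\mathbf{1}_{(-\infty,0)}$ has value $\rho$ with continuity at $(t,0)$ when $\rho<\rho_c$ and $\rho_*(t,0)=\rho_c$ when $\rho\ge\rho_c$, the constant sequence $x_N\equiv 0$ is typical, and Theorem \ref{th_strong_loc_eq}(iii), resp. Theorem \ref{th_strong_loc_eq_super}(ii), yields convergence of the law of the process started from $\eta_0^-$ to $\mu^{\alpha,\rho\wedge\rho_c}$ along times $Nt$; your Lipschitz-in-time interpolation is an acceptable substitute for the paper's semigroup argument at non-integer times, and attractiveness gives the lower bound for nondecreasing local $\psi$.

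The genuine gap is the upper bound, i.e. showing that the discrepancies $\zeta_t=\eta^\alpha_t-\eta^{\alpha,-}_t$ (the particles initially on the right half-line) become locally negligible, and your current-comparison sketch cannot deliver it. First, it is circular: you invoke convergence "via local equilibrium" of the microscopic $\eta$-current at the origin to $(p-q)(\overline{R})^{-1}(\rho\wedge\rho_c)$, but the untruncated process has no $L^\infty$ density profile on the right (this was exactly the obstruction you identified), so neither Theorem \ref{th_hydro} nor the local equilibrium theorems apply to it; that current convergence is essentially what is to be proven. Second, even if both currents were known to agree to leading order, such estimates control $N^{-1}\Gamma$, hence particle counts over macroscopic boxes up to $o(N)$; they say nothing about whether $O(1)$ discrepancies linger in a fixed finite window, so $\Prob\bigl(\sum_{x\in W}\zeta_{NT}(x)\ge 1\bigr)\to 0$ cannot follow from flux bounds at the hydrodynamic scale. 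The paper's mechanism is genuinely microscopic: the $\zeta$-particles are conserved second-class particles whose jump skeletons are $(p,q)$-random walks, hence transient to the right; their left-to-right order is preserved; and no such particle can be frozen forever at a finite site because $\liminf_{t\to+\infty}\eta^\alpha_t(x)=0$ a.s. for every $x$ (Lemma \ref{lemma_empty}), which is proved by dominating $\eta^\alpha_t$ by the process started from $\max(\eta_0,\xi_0^{\alpha,\rho_c})$ and using its convergence to $\mu^{\alpha,\rho_c}$ from \cite[Theorem 2.1]{bmrs1}. Hence the leftmost discrepancy a.s. escapes to $+\infty$ and, by the preserved ordering, every fixed window is eventually free of discrepancies. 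Some ingredient of this type (transience of the tagged second-class particles together with the no-permanent-occupation lemma, or an equivalent) is missing from your proposal, so the "escape of the extra right-hand mass", which you yourself flag as the hard part, remains unproved.
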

\begin{remark} (i) In the case $\rho<\rho_c$, 
 Theorem  \ref{cor_dream_bfl} solves the  convergence  
problem posed in  \cite[pp 195-196]{bfl}.\\  
(ii) In the case $\rho\geq\rho_c$, 
 Theorem  \ref{cor_dream_bfl} is a partial improvement over \cite[Theorem 2.2]{bmrs2}.
It improves the latter in the sense that we do not need a weak convexity assumption on $g$ as in 
\cite{bmrs2}, but is it less general with respect to initial conditions, since the result 
of \cite{bmrs2} only assumed the weaker condition
\be\label{weak_condition_eta}
\liminf_{n\to+\infty}\frac{1}{n}\sum_{x=-n}^0\eta_0(x)\geq\rho_c
\ee
\end{remark}
\section{Preliminary material}\label{sec:preliminary}
We first recall some definitions and preliminary results on the graphical construction and currents from
\cite{bmrs1,bmrs2}.
\subsection{Harris construction and coupling}\label{subsec_harris}\label{subsec:Harris}
For the Harris construction of the process with infinitesimal generator \eqref{generator},
we introduce a probability space $(\Omega,\mathcal F,\Prob)$, whose
generic element $\omega$ - called a Harris system (\cite{har}) - 
is a locally finite point measure of the form
\be\label{def_omega}
\omega(dt,dx,du,dz)=\sum_{n\in\N}\delta_{(T_n,X_n,U_n,Z_n)}
\ee
 on  $(0,+\infty)\times\Z\times(0,1)\times\{-1,1\}$,   
 where $\delta_{(.)}$ denotes Dirac measure, and $(T_n,X_n,U_n,Z_n)_{n\in\N}$ is a 
 $(0,+\infty)\times\Z\times(0,1)\times\{-1,1\}$-valued sequence.  
Under the probability measure $\Prob$, $\omega$ is a Poisson measure with 
intensity 
\be\label{def_intensity}\mu(dt,dx,du,dz):=dtdx\indicator{[0,1]}(u)du\, p(z)dz\ee
 In the sequel, the notation $(t,x,u,z)\in\omega$ will mean $\omega(\{(t,x,u,z)\})=1$. 
We shall also say that $(t,x,u,z)$ is a potential jump event.\\ 
On  $(\Omega,\mathcal F,\Prob)$,  a c\`adl\`ag process $(\eta_t^\alpha)_{t\geq 0}$ 
with generator \eqref{generator} and initial configuration $\eta_0$ can be constructed
in a unique way  (see \cite[Appendix B]{bmrs2})  so that 
\be\label{rule_1}
\forall(s,x,v,z)\in\omega,\quad
v\leq \alpha(x)g\left[\eta^\alpha_{s-}(x)\right]  \Rightarrow \eta^\alpha_s=(\eta^\alpha_{s-})^{x,x+z}
\ee
and, for all $x\in\Z$ and $0\leq s\leq s'$,
\be\label{rule_2}
\omega\left(
(s,s']\times E_x
\right)=0\Rightarrow\forall t\in(s,s'],\,\eta_t(x)=\eta_s(x)
\ee
where 
\be\label{def_E_x}
E_x:=\{(y,u,z)\in\Z\times(0,1)\times\{-1,1\}:\,
x\in\{y,y+z\}
\}
\ee
(note that the inequality in \eqref{rule_1} implies $\eta_{t-}^\alpha(x)>0$, 
cf. \eqref{properties_g}, thus $(\eta^\alpha_{t-})^{x,x+z}$ is well-defined). 
Equation \eqref{rule_1} says when a potential update time gives rise to an actual jump,
 while \eqref{rule_2} states that no jump ever occurs outside potential jump events.
We can construct a mapping
\be\label{mapping}
(\alpha,\eta_0,t)\mapsto\eta_t(\alpha,\eta_0,\omega)
\ee
The mapping \eqref{mapping} allows us to couple 
an arbitrary number of processes with generator \eqref{generator}, 
corresponding to different values of $\eta_0$, by using the same 
Poisson measure $\omega$ for each of them.  Since $g$ is nondecreasing, 
the update rule \eqref{rule_1} implies that  
the mapping \eqref{mapping} is nondecreasing with respect to $\eta_0$. 
It follows that the process is \textit{completely monotone}, 
and thus attractive (see \cite[Subsection 3.1]{bgrs5}). 
For instance, the coupling of two processes $(\eta_t^{\alpha})_{t\geq 0}$ 
and $(\xi_t^{\alpha})_{t\geq 0}$
behaves as follows. Assume $(t,x,u,z)$ is a potential 
update time for the measure $\omega$ in \eqref{def_omega}, 
and assume (without loss of generality) that 
$\eta^\alpha_{t-}(x)\leq\xi^\alpha_{t-}(x)$, so that (since $g$ is nondecreasing) 
$g\left(\eta^\alpha_{t-}(x)\right)\leq g\left(\xi^\alpha_{t-}(x)\right)$. 
Then the following jumps occur at time $t$:\\ \\
(J1) If $u\leq\alpha(x)g\left(\eta^\alpha_{t-}(x)\right)$, an $\eta$ 
and a $\xi$ particle simultaneously jump from $x$ to $x+z$.\\ \\
(J2) If $\alpha(x)g\left(\eta^\alpha_{t-}(x)\right)<u\leq\alpha(x)g\left(\xi^\alpha_{t-}(x)\right)$, 
a $\xi$ particle  alone jumps from $x$ to $x+z$.\\ \\
(J3) If $\alpha(x)g\left(\xi^\alpha_{t-}(x)\right)<u$, nothing happens.\\ \\
The above dynamics implies that $\left(\eta^\alpha_t,\xi^\alpha_t\right)_{t\geq 0}$ 
is a Markov process on $\overline{\mathbf X}^2$ with generator
\begin{eqnarray}
\nonumber\widetilde{L}^\alpha f(\eta,\xi) & = & 
\sum_{x,y\in\Z}\alpha(x)p(y-x)\left(g(\eta(x))\wedge g(\xi(x))\right)\left[
f\left(\eta^{x,y},\xi^{x,y}\right)-f(\eta,\xi)
\right]\\
& + & \sum_{x,y\in\Z}\alpha(x)p(y-x)[g(\eta(x))-g(\xi(x))]^+\left[
f\left(\eta^{x,y},\xi\right)-f(\eta,\xi)
\right]\nonumber\\
& + & \sum_{x,y\in\Z}\alpha(x)p(y-x)[g(\xi(x))-g(\eta(x))]^+\left[
f\left(\eta,\xi^{x,y}\right)-f(\eta,\xi)
\right]\nonumber\\&&\label{coupled_generator}
\end{eqnarray}
 In particular, this monotonicity allows us to couple different stationary processes.
In the sequel, $(\xi^{\alpha,\rho}_s)_{s \geq 0}$ 
will denote a stationary process with initial random configuration
$\xi ^{\alpha,\rho}_0\sim\mu^{\alpha,\rho}$. 
Thanks to \eqref{stoch_inc}, by Strassen's theorem,
for any $\rho,\rho'\in[0,\rho_c)$, we may (and will in the sequel) couple $\xi_0^{\alpha,\rho}$ and $\xi_0^{\alpha,\rho'}$ in such a way that $\xi_0^{\alpha,\rho}\leq\xi_0^{\alpha,\rho'}$ if $\rho\leq\rho'$. By coupling the processes via the Harris construction, since the mapping \eqref{mapping} is nondecreasing, 
it holds almost surely that
\be\label{ordered_stat}
\rho\leq\rho'\Rightarrow\xi^{\alpha,\rho}_t\leq\xi^{\alpha,\rho'}_t,\quad\forall t\geq 0
\ee
 In the next subsection, we introduce a refined version of 
 (J1)--(J3)  above to take into account different classes of $\eta$ particles.
\subsection{Classes and coalescence}\label{sec:class}
 For two particle configurations $\eta,\xi\in{\bf\overline{X}}$, 
we say that there are discrepancies 
at site $y\in\Z$ if $\xi(y)\not=\eta(y)$. If $\xi(y)<\eta(y)$, we call them 
$(\eta-\xi)$ discrepancies, and there are
$[\eta(y)-\xi(y)]^+$ such discrepancies; 
 while if $\xi(y)>\eta(y)$, 
we call them $(\xi-\eta)$ discrepancies, and there are
$[\xi(y)-\eta(y)]^+$ such discrepancies.
We may view these collections of discrepancies as particle configurations $\beta$, $\gamma$ given 
respectively for every $z\in\Z$ by 
\be\label{def_betagamma}\beta(z)=[\eta(z)-\xi(z)]^+,\quad \gamma(z)=[\xi(z)-\eta(z)]^+
\ee
 In the above coupling  \eqref{coupled_generator}, 
the monotonicity of $g$ implies that discrepancies can never be created, while
 $\beta$ and $\gamma$ particles  annihilate if one of them jumps over the other
 (as in \cite{and} for homogeneous zero-range dynamics). 
We call this a {\em coalescence}. For instance in case  (J2)  
 in the previous subsection,  if $\eta_{t-}(x+z)>\xi_{t-}(x+z)$, 
a $\gamma$ particle at $x$ jumps over a $\beta$ particle at $x+z$ and the two of them annihilate.\\ \\
To incorporate  a decomposition of $\eta$ particles into classes,
we consider two processes $(\eta_t)_{t\geq 0}$ and $(\xi_t)_{t\geq 0}$ 
generated by \eqref{generator} and coupled  through \eqref{coupled_generator}. 
Let
\be\label{total_disc}
 \mathfrak{D}_t :=\sum_{y\in\Z}[\xi_t(y)-\eta_t(y)]^+
\ee
denote the total number of $(\xi-\eta)$ discrepancies (or $\gamma$ particles) at time $t$. 
 Since discrepancies cannot be created, 
 if we assume  $\mathfrak{D}_0$  finite, then   $\mathfrak{D}_t$ 
 is finite for all $t>0$,
and moreover  $\mathfrak{D}_t$  is nonincreasing. 
In this section
we 
interpret the decrease of  $\mathfrak{D}_t$  in terms of 
coalescences of $\beta$ and $\gamma$ particles,
and decompose $\beta$ particles into different classes that will be useful in the sequel.\\ \\
 The $\eta$-process 
is divided into $n$ classes of particles  (we do not need classes for the $\xi$-process). 
This is achieved by coupling the previous two processes to new processes $\eta^i_.$ as follows, 
for $i\in\{1,\ldots,n\}$, with $\eta^n_.=\eta_.$. 
At initial time (say $0$) we have $\eta^i_0\leq\eta^{i+1}_0$ for every $i\in\{1,\ldots,n-1\}$.
It is convenient to
view the empty configuration as a special zero-range process $\eta^0_.$ 
and to add another special zero-range process
$\eta^{n+1}_.$ with $\eta_t^{n+1}(y)=+\infty$ for all $y\in\Z$.
The configuration of $\eta$ particles of class $i\in\{1,\ldots,n\}$  is 
$\eta^{i}-\eta^{i-1}$, and $\eta^i_.$ is the configuration of $\eta$ particles of classes $1$ to $i$.
An $\eta$ particle keeps the same class during evolution. \\ \\
 Note that the occupation number of a class can be equal to zero. For instance 
if $\eta_0^i = \eta_0^{i+1}< \eta_0^{i+2}$ then there are no $\eta_0$ particles of 
class $i+1$ but there are 
$\eta_0$ particles of class $i+2$. Even if all classes of $\eta_0$ particles have 
positive occupation number it is possible that $\eta_t^i(x)=0$ for some $t>0$ and 
some $1 \leq i \leq n$. For example if 
$\eta_{t-}(x) =1$ and a  particle jumps from $x$ at time $t$ then $\eta_t^1(x) =0$.\\
Note also that the notion of priority between classes which is natural when $g \equiv 1$
(see e.g. \cite{ak}) 
is not valid for the classes defined above for non-constant $g$. \\ \\ 
Let $(t,y,u,z)$  be an atom of the measure \eqref{def_omega}. 
Let $k\in\{1,\ldots,n+1\}$  be  
such that
\be\label{classindex}
\eta^{k-1}_{t-}(y)<\xi_{t-}(y)\leq\eta^{k}_{t-}(y)
\ee
Thus for $i=1,\ldots,k-1$, there are $\eta^{i}_{t-}(y)-\eta^{i-1}_{t-}(y)$ 
particles of class $i$ at site $y$, and they are
all matched to a $\xi$ particle.
If $i=k\leq n$, there are $\eta^{k}_{t-}(y)-\eta^{k-1}_{t-}(y)$ particles 
of class $i$ in $\eta_{t-}$ at $y$, of which
$\xi_{t-}(y)-\eta^{k-1}_{t-}(y)$ particles are matched to a $\xi$ particle.
If $i=k=n+1$, there are
$\xi_{t-}(y)-\eta^{n}_{t-}(y)$ unmatched $\xi$ particles 
at $y$ (recall that $\eta^{n}_{t-}=\eta_{t-}$).\\ \\
Then the following transitions may occur for $1\leq i$:\\ \\
(i) If $i<k$ and $\alpha(y)g(\eta^{i-1}_{t-}(y))<u<\alpha(y)g(\eta^{i}_{t-}(y))$, 
a pair of matched particles, an $\eta$ particle of class $i$ and a $\xi$ particle, 
move together to $y+z$.\\ \\
(ii) If $i=k\leq n$ and $\alpha(y)g(\eta^{k-1}_{t-}(y))<u<\alpha(y)g(\xi_{t-}(y))$, 
a pair of matched particles, an $\eta$ particle of class $k$ and a $\xi$ particle, 
move together to $y+z$.\\ \\
(iii) If $i=k\leq n$ and $\alpha(y)g(\xi_{t-}(y))<u<\alpha(y)g(\eta^{k}_{t-}(y))$, 
an unmatched $\eta$ particle of class $k$ (but no $\xi$ particle) moves to $y+z$.\\ \\
(iv) If $i=k=n+1$ and $\alpha(y)g(\eta^{k-1}_{t-}(y))<u<\alpha(y)g(\xi_{t-}(y))$, 
an unmatched $\xi$ particle (but no $\eta$ particle) moves to $y+z$.\\ \\
(v)  If $k<i\leq n$ and $\alpha(y)g(\eta^{i-1}_{t-}(y))<u<\alpha(y)g(\eta^{i}_{t-}(y))$, 
an unmatched $\eta$ particle of class $i$
moves to $y+z$.\\ \\
We label $\eta$ particles increasingly from left to right within each class,  and decide  that 
whenever a particle jumps to the right (resp. left) from a given site, it is necessarily 
the particle at this (initial) site with the highest (resp.  lowest) label among all 
particles of the same class at this site.    This enables us to maintain the order of labels during the evolution. 
We initially label $\xi$ particles in an arbitrary way. Whenever a $\xi$ particle jumps from a site, if several $\xi$ particles are present at this site, the label of the jumping $\xi$ particle may be chosen arbitrarily, for instance uniformly among all $\xi$ labels.\\ \\
When $\eta_{t-}(y+z)>\xi_{t-}(y+z)$, we denote by $k'\leq n$ the
lowest class of an unmatched $\eta$ particle at $y+z$. 
Coupling between an unmatched $\eta$ particle and an unmatched $\xi$ particle 
occurs in the following cases.\\ \\
(I) If in case (iv) there are unmatched $\eta$ particles at site $y+z$,  
the $\xi$ particle that moves to $y+z$ matches with one of 
the unmatched $\eta$ particles of class $k'$.\\ \\
(II) If in cases (iii)  or (v)  there are unmatched $\xi$ particles 
at $y+z$, one of them is randomly selected 
to match with the arriving $\eta$ particle.\\ \\
Besides, we point out that:\\ \\
(III) In cases (i)-(ii) above, the $\xi$ particle that moves to $y+z$ remains 
matched to the same $\eta$ particle of class $i$ if 
$\eta_{t-}(y+z)\leq \xi_{t-}(y+z)$ or $i\leq k'$. If $i>k'$, then the arriving 
$\xi$ particle matches with an $\eta$ particle of class $k'$ (thus it leaves 
the $\eta$ particle it was previously matched with). \\ \\
The matching of $\eta$ and $\xi$ particles defined above satisfies the following property.
A $\xi$ particle cannot be matched with an $\eta$ particle of class $j$ if there 
is an unmatched  $\eta$ particle of lower class present at the site.
 This ensures that the definition \eqref{classindex} makes sense at all times $t$ and all sites $y$.\\ \\
Coming back to the quantity  $\mathfrak{D}_t$  defined in \eqref{total_disc}, we have that  $\mathfrak{D}_t-\mathfrak{D}_{t-}=-1$  if and only if 
we are in one of the above cases (I)-(II).
This corresponds to a coalescence.
\subsection{Currents}\label{subsec_currents}
Let $x_.=(x_s)_{s\geq 0}$ denote a $\Z$-valued piecewise constant c\`adl\`ag   path such 
that $\vert x_s-x_{s-}\vert\leq 1$ for all $s\geq 0$. In the sequel we will  use  
paths $(x_.)$ independent of the Harris system used for the particle dynamics,
hence we may assume that $x_.$ has no jump time in common with the latter.  
We denote by 
$\Gamma_{x_.}^\alpha(\tau,t,\eta)$ 
the rightward current across the path $x_.$ 
in the time interval $(\tau,t]$ in the quenched process  $(\eta_s^\alpha)_{s\geq \tau}$  starting from $\eta$ 
 at time $\tau$  in environment $\alpha$, that is  the sum of two contributions. The contribution of particle jumps is
 the number of times a particle jumps from $x_{s-}$ to $x_{s-}+1$ (for $\tau<s\le t$), 
 minus the number of times a particle jumps from $x_{s-}+1$ to $x_{s-}$. 
 The contribution of path motion is obtained by summing over jump times
$s$ of the path, a quantity equal to the number of particles at $x_{s-}$ if the jump is to the left, or
 minus  the number of particles at $x_{s-}+1$ if the jump is to the right.
This  can be precisely written (using notation \eqref{def_omega}),
 \begin{eqnarray}
 \Gamma^\alpha_{x_.}(\tau,t,\eta) & := &
 \int
 \indicator{
 \left\{u\leq\alpha(x_{s})g[\eta_{s-}^\alpha(x_{s})]\right\}}\indicator{\{\tau<s\leq t,\,z=1,\,x=x_{s}\}}
 \omega(ds,dx,du,dz)
 \nonumber\\ 
 &  - & \int
 \indicator{
 \left\{u\leq\alpha(x_{s}+1)g[\eta_{s-}^\alpha(x_{s}+1)]\right\}}
 \indicator{\{\tau<s\leq t,\,z=-1,\,x=x_{s}+1\}}
 \omega(ds,dx,du,dz)\nonumber\\ 
& - & \sum_{\tau<s\leq t}(x_s-x_{s-})\eta_{s}^\alpha\left[
\max(x_s,x_{s-})
\right]\label{current_harris}
 \end{eqnarray}
 If  
\be\label{finite_right}\sum_{x>x_\tau}\eta(x)<+\infty\ee
 we also have
 \be\label{current}
 \Gamma^\alpha_{x_.}(\tau,t,\eta)=\sum_{x>x_t}\eta_t^\alpha(x)-\sum_{x>x_\tau}\eta(x)
 \ee
For $x_0\in\Z$, we will write $\Gamma^\alpha_{x_0}$ for the current across the 
fixed site $x_0$; that is, $\Gamma^\alpha_{x_0}(\tau,t,\eta):=\Gamma^{\alpha}_{x_.}(\tau,t,\eta)$,
where $x_.$ is the constant path defined by $x_t=x_0$ for all  $t\geq \tau$. 
If $\tau=0$, we simply write $\Gamma_{x_.}^\alpha(t,\eta)$ or $\Gamma_{x_0}^\alpha(t,\eta)$ instead of
$\Gamma_{x_.}^\alpha(0,t,\eta)$ or  $\Gamma_{x_0}^\alpha(0,t,\eta)$.\\ \\
It follows from \eqref{current}  that if $a,b\in\Z$ and $a<b$, then
\be\label{difference_currents}
\Gamma_b^\alpha(\tau,t,\eta)-\Gamma_a^\alpha(\tau,t,\eta)=-\sum_{x=a+1}^b\eta_t(x)+\sum_{x=a+1}^b\eta(x)
\ee
 The latter formula remains valid even if \eqref{finite_right} does not hold.
 The following results    will be important tools to compare currents.
For a particle configuration  $\zeta\in\overline{\mathbf X}$  and a site $x_0\in \Z$, we define
\be\label{def_cfd}
F_{x_0}(x,\zeta):=\left\{
\ba{lll}
\sum_{y=1+x_0}^{x}\zeta(y) & \mbox{if} & x>x_0\\ \\
-\sum_{y=x}^{x_0}\zeta(y) & \mbox{if} & x\leq x_0
\ea
\right.
\ee
Let us couple two processes $(\zeta_t)_{t\geq 0}$ and $(\zeta'_t)_{t\geq 0}$ 
 through \eqref{coupled_generator},   
with $x_.=(x_s)_{s\geq 0}$ as above.  
\begin{lemma}
\label{lemma_current}
\be\label{current_comparison}\Gamma^\alpha_{x_.}(t,\zeta_0)
-\Gamma^\alpha_{x_.}(t,\zeta'_0)
\geq -\left(0\vee\sup_{x\in\Z}\left[F_{x_0}(x,\zeta_0)-F_{x_0}(x,\zeta'_0)\right]\right)\ee
\end{lemma}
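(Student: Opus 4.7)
The idea is to express the current difference as a difference of heights and then establish a simple time-monotonicity property of the coupled dynamics.

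Assume first that $\zeta_0,\zeta'_0$ are supported in $(-\infty,R]$ for some $R\in\Z$, so that \eqref{finite_right} holds for every $t\geq 0$; the general case will follow by truncating on the right and passing to the limit via the explicit formula \eqref{current_harris} and dominated convergence. Introduce the partial sum of signed discrepancies
\[
P_s(y):=\sum_{z\leq y}\bigl[\zeta_s(z)-\zeta'_s(z)\bigr],\qquad s\geq 0,\ y\in\Z.
\]
Applying \eqref{current} to each of the two processes and subtracting, while using that the total signed discrepancy $\sum_z[\zeta_s(z)-\zeta'_s(z)]$ is preserved by the coupled dynamics \eqref{coupled_generator}, yields
\[
\Gamma^\alpha_{x_\cdot}(t,\zeta_0)-\Gamma^\alpha_{x_\cdot}(t,\zeta'_0)=P_0(x_0)-P_t(x_t).
\]
A direct computation from \eqref{def_cfd} shows that
\[
0\vee \sup_{x\in\Z}\bigl[F_{x_0}(x,\zeta_0)-F_{x_0}(x,\zeta'_0)\bigr]=\bigl[\sup_{y\in\Z}P_0(y)-P_0(x_0)\bigr]^+,
\]
so the lemma reduces to the pointwise bound $P_t(x_t)\leq\sup_y P_0(y)$; since $x_t$ is an arbitrary integer, it is enough to prove that $t\mapsto\sup_y P_t(y)$ is nonincreasing.

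The monotonicity is verified update by update. For a potential jump event $(s,x,u,z)\in\omega$ in the Harris system, the coupling rules of \eqref{coupled_generator} leave four possibilities. If both processes jump together, or neither jumps, the discrepancies at $x$ and $x+z$ are unchanged, so $P_s\equiv P_{s-}$. Suppose next that only $\zeta$ jumps; this requires $g(\zeta_{s-}(x))>g(\zeta'_{s-}(x))$, hence $d_{s-}(x):=\zeta_{s-}(x)-\zeta'_{s-}(x)\geq 1$. Then $P_s$ agrees with $P_{s-}$ except at one site: for $z=1$ one has $P_s(x)=P_{s-}(x)-1$, a decrease, which cannot raise the supremum; for $z=-1$ one has $P_s(x-1)=P_{s-}(x-1)+1$, but the identity $P_{s-}(x)-P_{s-}(x-1)=d_{s-}(x)\geq 1$ gives $P_{s-}(x)\geq P_s(x-1)$, so the new value is dominated by the old neighboring value. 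The symmetric case in which only $\zeta'$ jumps is handled identically with reversed signs. Hence $\sup_y P_s(y)\leq\sup_y P_{s-}(y)$ at every potential update, which proves the desired monotonicity.

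The main obstacle is the case-by-case step just above: in each of the four noncoupled jump patterns (rightward or leftward, performed by $\zeta$ or by $\zeta'$ alone) one must match the sign of $d_{s-}(x)$ forced by \eqref{coupled_generator} with the direction in which $P$ can locally increase, and use that sign to dominate the new value of $P$ by an adjacent pre-jump value. The height rewriting of the current difference, the equivalence between $\sup_x [F_{x_0}(x,\zeta_0)-F_{x_0}(x,\zeta'_0)]$ and $\sup_y P_0(y)-P_0(x_0)$, and the right-truncation approximation are all routine bookkeeping.
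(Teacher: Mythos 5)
The paper itself gives no proof of this lemma (it is quoted from \cite{bmrs1}), so your argument has to stand on its own. Its core is sound: the update-by-update case analysis is correct, since an isolated $\zeta$-jump (resp.\ $\zeta'$-jump) at $x$ forces $\zeta_{s-}(x)>\zeta'_{s-}(x)$ (resp.\ $<$), and your use of that sign to dominate the one modified value of the height profile by an adjacent pre-jump value is exactly the right mechanism; the identification of $0\vee\sup_x\left[F_{x_0}(x,\zeta_0)-F_{x_0}(x,\zeta'_0)\right]$ with $\sup_y P_0(y)-P_0(x_0)$ is also correct bookkeeping. This interface/height-function monotonicity is the natural route and is in the spirit of the cited argument.

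There is, however, a genuine well-definedness gap as written. Your central objects, $P_s(y)=\sum_{z\leq y}\left[\zeta_s(z)-\zeta'_s(z)\right]$ and the ``conserved total signed discrepancy'' $\sum_z\left[\zeta_s(z)-\zeta'_s(z)\right]$, need not exist for the configurations the lemma covers and is used on: $\zeta_0,\zeta'_0\in\overline{\mathbf X}$ may carry infinite occupation numbers (Corollary \ref{corollary_consequence} applies the lemma with $\eta^{*,y}$, which is infinite at every site $\leq y$), and even in $\mathbf X$ the processes compared later (e.g.\ equilibrium configurations) have infinite total mass to the left, so the left-anchored series can diverge or be of the form $\infty-\infty$; your right truncation does not touch this. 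The repair is to anchor at the reference site instead of at $-\infty$: after your right truncation work with $Q_s(y):=\sum_{x>y}\left[\zeta_s(x)-\zeta'_s(x)\right]$, for which \eqref{current} gives the current difference directly as $Q_t(x_t)-Q_0(x_0)$ (no conservation statement is needed at all), the right-hand side of \eqref{current_comparison} becomes $-(Q_0(x_0)-\inf_yQ_0(y))$, and your identical case analysis shows $\inf_yQ_s(y)$ is nondecreasing. Two steps you call routine also need the finite-propagation input rather than what you invoke: removing the right truncation is not a dominated-convergence argument from \eqref{current_harris} (there is no dominating variable); one uses a.s.\ finite propagation of influence (cf.\ Lemma \ref{lemma_finite_prop}) so that for large cutoff the current across $x_.$ up to time $t$ is a.s.\ unchanged. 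Likewise, ``update by update'' over $[0,t]$ involves a.s.\ infinitely many Poisson events on $\Z$, so the monotonicity of the supremum/infimum should be verified on the finitely many events that can influence the relevant region by time $t$ (again finite propagation), or on finite approximating systems; infinite occupation numbers then enter via a monotone approximation. With these repairs your proof goes through.
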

\begin{corollary}\label{corollary_consequence}
For $y\in\Z$, define the configuration 
\be\label{conf-max}
\eta^{*,y}:=(+\infty)\indicator{(-\infty,y]\cap\Z}
\ee
Then,  for any $\zeta\in\overline{\mathbf X}$, 
\be\label{eq_corollary_consequence}
\Gamma^\alpha_y(t,\zeta)\leq\Gamma^\alpha_y(t,\eta^{*,y})
\ee
\end{corollary}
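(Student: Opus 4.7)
The plan is to deduce the corollary directly from Lemma \ref{lemma_current}, applied with $\zeta_0 := \eta^{*,y}$, $\zeta'_0 := \zeta$, and the constant path $x_s \equiv y$ (so $x_0 = y$ in the notation of \eqref{def_cfd}). Under this choice, the lemma yields
$$\Gamma^\alpha_y(t, \eta^{*,y}) - \Gamma^\alpha_y(t, \zeta) \geq -\left(0 \vee \sup_{x \in \Z}\left[F_y(x, \eta^{*,y}) - F_y(x, \zeta)\right]\right),$$
so it suffices to show that the supremum on the right-hand side is nonpositive.

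This is a direct computation from \eqref{def_cfd} and the definition \eqref{conf-max} of $\eta^{*,y}$. For $x > y$ the sum defining $F_y(x, \eta^{*,y})$ ranges over sites where $\eta^{*,y}$ vanishes, so $F_y(x, \eta^{*,y}) = 0$; for $x \leq y$, the sum ranges over sites carrying $+\infty$ particles, hence $F_y(x, \eta^{*,y}) = -\infty$. On the other hand, for every configuration $\zeta \in \overline{\mathbf{X}}$, the same definition forces $F_y(x, \zeta) \geq 0$ when $x > y$ and $F_y(x, \zeta) \leq 0$ when $x \leq y$. Therefore $F_y(x, \eta^{*,y}) \leq F_y(x, \zeta)$ pointwise in $x$ (interpreted in the extended-real sense), so the supremum is $\leq 0$, its maximum with $0$ equals $0$, and \eqref{eq_corollary_consequence} follows.

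The only genuine point requiring a brief verification is that $\Gamma^\alpha_y(t, \eta^{*,y})$ is almost surely finite, in spite of $\eta^{*,y}$ carrying infinite mass on $(-\infty, y]$, so that the inequality deduced from Lemma \ref{lemma_current} is meaningful rather than a vacuous statement of the form $+\infty \geq \,\cdot\,$. Since the path is constant, the third term in \eqref{current_harris} vanishes and only the two Poisson integrals remain; each of these is dominated by the Poisson counting process of potential jump events at the bond $\{y, y+1\}$, whose intensity is bounded by $\alpha(\cdot) g_\infty \leq 1$ thanks to \eqref{ginfty}. Hence only finitely many such events occur in $[0,t]$, and no substantive difficulty is expected beyond correctly tracking the $-\infty$ values of $F_y(\cdot, \eta^{*,y})$.
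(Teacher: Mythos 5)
Your proof is correct and is essentially the argument the paper relies on (the paper defers both Lemma \ref{lemma_current} and Corollary \ref{corollary_consequence} to \cite{bmrs1}, where the corollary is obtained precisely by applying the lemma with the constant path at $y$ and observing that $F_y(\cdot,\eta^{*,y})$, being $0$ to the right of $y$ and $-\infty$ at and to the left of $y$, is pointwise minimal among all $F_y(\cdot,\zeta)$, so the supremum term is nonpositive). Your added remark on the a.s. finiteness of the currents across the fixed site is a correct and harmless verification.
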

 Lemma \ref{lemma_current} and 
Corollary \ref{corollary_consequence} are  proved in  \cite{bmrs1}.
 The following version of {\em finite propagation property} will be 
used repeatedly in the sequel. See \cite{bmrs1}  for a proof.
\begin{lemma}\label{lemma_finite_prop}
For each  $V > 1$, there exists $b =  b(V) >  0$  such that for large enough $t$,
 if $ \eta_0 $ and $\xi _ 0 $ agree on an interval $(x,y)$, then, outside probability $e^{-bt}$, 
$$
\eta_s(u) = \xi_s(u) \quad\mbox{for all }0 \leq s \leq t \mbox{ and } u\in (x+Vt,y-Vt)
$$ 
\end{lemma}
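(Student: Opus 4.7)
The plan is to use the Harris graphical construction of Subsection \ref{subsec_harris} to couple $(\eta_s^\alpha)_{s\geq 0}$ and $(\xi_s^\alpha)_{s\geq 0}$ via the shared Poisson measure $\omega$, and to control the spreading of the discrepancy set $\mathcal{D}_s := \{z \in \Z : \eta_s(z) \neq \xi_s(z)\}$. By hypothesis $\mathcal{D}_0 \subset \Z \setminus (x,y)$, so it splits into a left cluster in $(-\infty,x]$ and a right cluster in $[y,+\infty)$. Since the update rule \eqref{rule_1} is a deterministic function of the current configurations and each Poisson atom, a site $w$ can enter $\mathcal{D}_s$ only through an atom at some $w' \in \{w-1,w\}$ and only if a neighbour of $w$ already lies in $\mathcal{D}_{s-}$. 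Consequently the rightward front of the left cluster,
\[
R_s := x \vee \sup\!\left\{ w \leq \tfrac{x+y}{2} : w \in \mathcal{D}_{s'} \text{ for some } s' \leq s \right\},
\]
and the symmetric leftward front $L_s$ of the right cluster, are monotone, and the event $\{R_t \leq x + Vt\} \cap \{L_t \geq y - Vt\}$ implies the conclusion of the lemma.

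Next, I would use the labelled-particle coupling of Subsection \ref{sec:class} to realise each discrepancy as a $\beta$- or $\gamma$-particle whose motion consists of at most one nearest-neighbour step per atom of $\omega$ involving its site, with additional coalescence transitions that only reduce $\mathcal{D}$. Thanks to the bounds $\alpha \leq 1$ and $g \leq g_\infty = 1$ from \eqref{ginfty}, a Poisson atom at a neighbour of the front $R_s$ fires an effective $\xi$- or $\eta$-only jump at rate at most $1$, so the advance of $R_s - x$ is stochastically dominated by a Poisson counting process of unit rate. The classical Cram\'er--Chernoff bound then gives, for every $V > 1$,
\[
\Prob\!\left( R_t - x > Vt \right) \leq \exp\!\left( -t\,[V \log V - V + 1] \right),
\]
and the analogous bound for $L_t$. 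A union bound yields the overall $e^{-b(V) t}$ estimate with $b(V) := \tfrac12 [V \log V - V + 1] > 0$ for $t$ sufficiently large.

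The main obstacle is to justify that the front $R_s$ cannot be pushed by more than one lattice step at any single atom of $\omega$, since in principle a cascade of $\xi$-only jumps could displace several discrepancies within a very short time. The labelled construction of Subsection \ref{sec:class} resolves this by arranging that each atom of $\omega$ corresponds to at most one particle transition (cases (i)--(v) there), so the advance per atom of any discrepancy, and hence of the front, is bounded by $1$. With this bookkeeping, the domination by a Poisson counting process is rigorous and Chernoff applies directly, which is the argument underlying the proof in \cite{bmrs1}.
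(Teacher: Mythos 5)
The paper itself does not prove this lemma; it quotes it from \cite{bmrs1}, and your strategy is essentially the standard proof: couple through the Harris system, note that discrepancies spread only by nearest-neighbour steps each requiring a Poisson atom, dominate the spread by a rate-one Poisson process, and conclude by Chernoff and a union bound. Two steps, however, need tightening before the argument is airtight. First, with the convention \eqref{def_omega}--\eqref{rule_1} an atom $(s,w',u,z)$ moves a particle from $w'$ to $w'+z$, so the atoms that can affect a site $w$ sit at $w-1$, $w$ or $w+1$; more to the point, a \emph{new} discrepancy at $w$ can only be produced by an atom located at an already discrepant neighbour $w\pm 1$ with $z$ pointing toward $w$ (an atom at $w$ itself, when $\eta_{s-}(w)=\xi_{s-}(w)$, triggers the same decision in both marginals and cannot create one), not by atoms at $\{w-1,w\}$ as you wrote. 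Second, and more seriously, the front $R_s$ you define through the midpoint cutoff is \emph{not} dominated by a unit-rate Poisson process as stated: if discrepancies emanating from $[y,+\infty)$ spread left past the midpoint, $R_s$ can jump by many lattice steps at a single atom, so the claimed domination fails for that process.

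The standard repair is to dominate the discrepancy set by an autonomous growth process driven by the atoms alone: let $r_s$ start at $x$ and increase by one exactly when an atom occurs at its current position with $z=+1$, and let $l_s$ start at $y$ and decrease by one at atoms at its current position with $z=-1$. An induction over atom times gives $\{w:\eta_s(w)\neq\xi_s(w)\}\subset(-\infty,r_s]\cup[l_s,+\infty)$ as long as $r_s<l_s$, and $r_s-x$ and $y-l_s$ are genuine Poisson processes of rates $p$ and $q\leq 1$, to which your Cram\'er--Chernoff bound and union bound apply verbatim, yielding $b(V)>0$ for $V>1$. With this substitution (and note that the multiclass labelling of Subsection \ref{sec:class} is not needed: the basic coupling of Subsection \ref{subsec:Harris} already guarantees at most one jump per atom in each marginal), your proof is complete and coincides with the argument behind the citation.
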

Next corollary  to Lemma \ref{lemma_current} follows from the latter  
combined with Lemma \ref{lemma_finite_prop}.  Its proof is an
adaptation of the one of \cite[Corollary 4.2]{bmrs2}.
\begin{corollary}
\label{corollary_current}
Define
$x_t^M= \sup_{s\in[t_0,t]} x_s,
x_t^m= \inf_{s\in[t_0,t]} x_s$.
Let $\eta_0,\xi_0\in{\mathbf X}$.
Then,  given  $V>1$, 
\begin{eqnarray}
&&\Gamma^\alpha_{x_.}(t,\eta_0)-\Gamma^\alpha_{x_.}(t,\xi_0)\label{current_comparison_2}\\
&&\quad \geq -\left(0\vee\sup_{x\in [\min(x_0,x_t^m)-Vt, \max(x_0,x_t^M)+1+Vt]}
\left[F_{x_0}(x,\eta_0)-F_{x_0}(x,\xi_0)\right]\right)\nonumber
\end{eqnarray}
with 
probability tending to $1$ as $t\to+\infty$.
\end{corollary}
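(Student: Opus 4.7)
The plan is to combine the global current comparison of Lemma \ref{lemma_current} with the finite propagation property of Lemma \ref{lemma_finite_prop}, via a truncation of the initial configurations outside a suitable localization window. Fix $V>1$ and, noting that $x_0\in[x_t^m,x_t^M]$, set
\[
W:=[x_t^m-Vt,\,x_t^M+1+Vt]=[\min(x_0,x_t^m)-Vt,\,\max(x_0,x_t^M)+1+Vt].
\]

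First, I introduce truncated initial configurations
\[
\tilde\eta_0(x):=\eta_0(x)\indicator{W}(x),\qquad \tilde\xi_0(x):=\xi_0(x)\indicator{W}(x),
\]
which both belong to ${\mathbf X}$. A direct computation from \eqref{def_cfd} shows that the function $x\mapsto F_{x_0}(x,\tilde\eta_0)-F_{x_0}(x,\tilde\xi_0)$ is constant on each half-line outside $W$, equal to its value at the corresponding endpoint of $W$, while it coincides with $F_{x_0}(\cdot,\eta_0)-F_{x_0}(\cdot,\xi_0)$ on $W\cap\Z$. Consequently
\[
\sup_{x\in\Z}\bigl[F_{x_0}(x,\tilde\eta_0)-F_{x_0}(x,\tilde\xi_0)\bigr]=\sup_{x\in W}\bigl[F_{x_0}(x,\eta_0)-F_{x_0}(x,\xi_0)\bigr].
\]
Applying Lemma \ref{lemma_current} to the pair $(\tilde\eta_0,\tilde\xi_0)$ therefore yields precisely the right-hand side of \eqref{current_comparison_2} as a lower bound for $\Gamma^\alpha_{x_.}(t,\tilde\eta_0)-\Gamma^\alpha_{x_.}(t,\tilde\xi_0)$.

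Second, I transfer this inequality back to the original configurations using finite propagation. Since $\eta_0$ and $\tilde\eta_0$ agree on $W$, Lemma \ref{lemma_finite_prop} (applied to an interval enlarged by one site on each side in order to convert its open-interval formulation into the closed interval $[x_t^m,x_t^M+1]$ we need) ensures that, outside probability $e^{-bt}$, the two processes coincide at every site of $[x_t^m,x_t^M+1]$ for every $s\in[0,t]$. The explicit formula \eqref{current_harris} shows that $\Gamma^\alpha_{x_.}(t,\cdot)$ depends only on $\eta^\alpha_{s-}(x_s)$, $\eta^\alpha_{s-}(x_s+1)$ and $\eta^\alpha_s(\max(x_s,x_{s-}))$ for $s\in(0,t]$, i.e.\ only on sites in $[x_t^m,x_t^M+1]$; hence on this event $\Gamma^\alpha_{x_.}(t,\eta_0)=\Gamma^\alpha_{x_.}(t,\tilde\eta_0)$. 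The same argument applied to $(\xi_0,\tilde\xi_0)$ gives the analogous equality, and intersecting the two events of probability at least $1-e^{-bt}$ yields \eqref{current_comparison_2} with probability tending to $1$.

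The main obstacle is purely bookkeeping: one must carefully verify that setting the configurations to zero outside $W$ exactly collapses the supremum over $\Z$ of the $F$-difference to a supremum over $W$, and reconcile the open-interval formulation of Lemma \ref{lemma_finite_prop} with the closed-interval requirement imposed by \eqref{current_harris} at the boundary sites. Both steps only cost an $O(1)$ enlargement of the localization window, which is absorbed into the constant $V$ and does not affect the conclusion.
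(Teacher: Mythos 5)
Your proof is correct and takes essentially the approach the paper intends (it only cites an adaptation of \cite[Corollary 4.2]{bmrs2}): truncate the initial configurations outside the $Vt$-window so that Lemma \ref{lemma_current} collapses the supremum over $\Z$ to the stated window, then use Lemma \ref{lemma_finite_prop} to identify the currents of the truncated and original processes up to time $t$ outside probability $O(e^{-bt})$. The one point worth spelling out is the boundary/enlargement issue: since enlarging the supremum window \emph{weakens} the lower bound, the $O(1)$ enlargement cannot literally be ``absorbed into $V$'' as stated; either observe that agreement on the closed window is the same as agreement on the open interval extending one unit further on each side (so no enlargement is needed), or run the argument with some $V'\in(1,V)$ so that the enlarged $V'$-window is contained in the stated $V$-window for all large $t$.
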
 
The following result (see \cite[Proposition 4.1]{bmrs2}) is concerned  
with the asymptotic current produced by a source-like initial condition.
\begin{proposition}\label{current_source} 
Assume $x_t$ is such that $\lim_{t\to+\infty}t^{-1}x_t$ 
exists.  Let $\eta^{\alpha,t}_0:=\eta^{*,x_t}$, see \eqref{conf-max}.
Then 
\begin{eqnarray}
\limsup_{t\to\infty}\left\{\Exp\left|
t^{-1}\sum_{x>x_t}\eta^{\alpha,t}_t(x)-(p-q)c
\right| - p[\alpha(x_t)-c]\right\}
& \leq & 0 \label{upperbound_current_source}
\end{eqnarray}
\end{proposition}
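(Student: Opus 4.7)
Since $\eta^{*,x_t}$ carries no mass on $(x_t,+\infty)$, we have $\sum_{x>x_t}\eta^{\alpha,t}_t(x)=\Gamma^\alpha_{x_t}(t,\eta^{*,x_t})$. The strategy is to sandwich, in $L^1$, the normalized current $t^{-1}\Gamma^\alpha_{x_t}(t,\eta^{*,x_t})$ between $(p-q)c$ from below and $(p-q)c+p[\alpha(x_t)-c]=p\alpha(x_t)-qc$ from above; decomposing $\Exp|t^{-1}\Gamma-(p-q)c|$ into its positive and negative parts then yields the claim. For the lower bound, Corollary \ref{corollary_consequence} gives $\Gamma^\alpha_{x_t}(t,\eta^{*,x_t})\geq\Gamma^\alpha_{x_t}(t,\xi^{\alpha,\rho}_0)$ almost surely for every $\rho\in(0,\rho_c)$, and under stationarity the mean current per unit time at $x_t$ equals $(p-q)\overline{R}^{-1}(\rho)$ by the product structure of $\mu^{\alpha,\rho}$; combined with a $O(t)$ variance bound for the stationary current and the limit $\rho\uparrow\rho_c$, this controls the negative part of $t^{-1}\Gamma-(p-q)c$ in $L^1$ as $t\to+\infty$.

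For the upper bound, the key observation is that $\eta^{\alpha,t}_s(x_t)=+\infty$ for all $s\geq 0$, hence $g(\eta^{\alpha,t}_s(x_t))\equiv 1$. Applying the generator to the observable $\eta\mapsto\sum_{x>x_t}\eta(x)$ yields the identity
\[
\Exp\Gamma^\alpha_{x_t}(t,\eta^{*,x_t})=p\alpha(x_t)\,t-q\alpha(x_t+1)\int_0^t\Exp g\bigl(\eta^{\alpha,t}_s(x_t+1)\bigr)\,ds,
\]
so the desired bound reduces to showing that the time-averaged backflow rate $\alpha(x_t+1)\Exp g(\eta^{\alpha,t}_s(x_t+1))$ reaches the critical value $c$ to leading order in $t$. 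I would couple $\eta^{\alpha,t}$ with the stationary process $\xi^{\alpha,\rho}$ (for $\rho<\rho_c$) through the Harris system and argue that, after a transient time $s_0(t)=o(t)$, the inequality $\eta^{\alpha,t}_s(x_t+1)\geq\xi^{\alpha,\rho}_s(x_t+1)$ holds with probability tending to $1$. This gives $\Exp g(\eta^{\alpha,t}_s(x_t+1))\geq\overline{R}^{-1}(\rho)/\alpha(x_t+1)-o(1)$ on $[s_0,t]$, and $\rho\uparrow\rho_c$ closes the estimate in expectation. A Poisson-type variance bound on $\Gamma$, together with the deterministic dominance of the rightward jumps from $x_t$ by a Poisson$(p\alpha(x_t)t)$ variable, transfers this to the required $L^1$ control of the positive part.

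The delicate point, and the main obstacle, is establishing the stochastic dominance above. Under the coupled dynamics \eqref{coupled_generator}, the discrepancies $(\xi^{\alpha,\rho}-\eta^{\alpha,t})^+$ are initially supported on $(x_t,+\infty)$ and can only decrease through coalescence with $(\eta^{\alpha,t}-\xi^{\alpha,\rho})^+$-type discrepancies emitted from the source. Because the source injects mass into the right half-line at rate $p\alpha(x_t)>p\,\overline{R}^{-1}(\rho)$, exceeding the stationary rightward flux of $\xi^{\alpha,\rho}$ across $x_t$, the excess discrepancies near $x_t+1$ are consumed on a sublinear time scale; finite propagation (Lemma \ref{lemma_finite_prop}) together with Corollary \ref{corollary_current} confines the influence of the initial discrepancies far to the right to a spatial window of size $O(t)$. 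This coupling argument, which exploits both the source strength and the ergodic properties of $\alpha$ through Assumption \ref{assumption_ergo}, is the technical core and is carried out in detail in \cite[Proposition 4.1]{bmrs2}.
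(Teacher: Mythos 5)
You should first note that the paper itself does not prove this proposition: it is imported verbatim from \cite[Proposition 4.1]{bmrs2} as preliminary material, so the only "proof" in the paper is that citation. Your proposal reproduces this at exactly the decisive point. The reduction you set up (lower bound on the current via Corollary \ref{corollary_consequence} and the stationary current, upper bound via the compensator identity $\Exp\Gamma^\alpha_{x_t}(t,\eta^{*,x_t})=p\alpha(x_t)t-q\int_0^t\alpha(x_t+1)\Exp g(\eta^{\alpha,t}_s(x_t+1))ds$) is sensible, but the whole content of the proposition is then concentrated in your claimed stochastic domination: that after a transient $s_0(t)=o(t)$, uniformly in $s\in[s_0(t),t]$ and in $t$ (recall the reference site $x_t+1$ moves with $t$), one has $\eta^{\alpha,t}_s(x_t+1)\geq\xi^{\alpha,\rho}_s(x_t+1)$ with probability tending to one. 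You justify this only heuristically ("excess discrepancies near $x_t+1$ are consumed on a sublinear time scale") and then state that it "is carried out in detail in \cite[Proposition 4.1]{bmrs2}" --- i.e. you cite the statement being proved; that is circular. The heuristic itself does not convert easily into a proof: discrepancies of the coupled zero-range dynamics have no autonomous random-walk structure, the $(\xi-\eta)$ discrepancies initially to the right of $x_t$ are infinite in number, and Lemma \ref{lemma_finite_prop} together with Corollary \ref{corollary_current} only localizes the problem to a window of size $O(t)$; it says nothing about domination at the fixed site $x_t+1$. What you are asserting is essentially local convergence near a source to a near-critical invariant measure, uniformly in the source position, which is of the same order of difficulty as the proposition itself (compare the machinery of Section \ref{sec:proof_loc_eq_2}, or \cite[Theorem 2.1]{bmrs1}, which moreover requires an initial condition dominating a critical configuration and gives no $o(t)$ rate). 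So there is a genuine gap: the core estimate, the saturation of the backflow at $x_t+1$ at level $c$, is not proved.

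Two secondary points, both fixable. First, in the lower bound you should parametrize by the fugacity $\beta\uparrow c$ and invoke Lemma \ref{current_critical} directly (stated for $\beta<\inf_x\alpha(x)$), rather than letting $\rho\uparrow\rho_c$: when $c<\inf\mathrm{supp}\,Q_0$, densities $\rho$ near $\rho_c$ correspond to fugacities $\overline{R}^{-1}(\rho)>c$ for which $\mu^{\alpha,\rho}$ need not exist. Second, the passage from a bound on $\Exp[t^{-1}\Gamma]$ to the positive part does not follow from "a Poisson-type variance bound on $\Gamma$": the compensator contribution to $\Gamma$ has variance of order $t^2$ a priori, so $t^{-1}\Gamma$ need not concentrate around its mean. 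This step is also unnecessary: writing $\Exp\left[X-(p-q)c\right]^+=\Exp\left[X-(p-q)c\right]+\Exp\left[X-(p-q)c\right]^-$ with $X=t^{-1}\Gamma^\alpha_{x_t}(t,\eta^{*,x_t})$, the first term is handled by your expectation identity (once the backflow claim is available) and the second by your lower-bound step. With these repairs the outer scheme closes, but as written the proposal does not constitute an independent proof of the statement.
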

Finally, the following result for the equilibrium current will be important for our purpose.
\begin{lemma}\label{current_critical}(\cite[Lemma 4.10]{bmrs2}).
Let $\alpha\in{\bf A}$, $\beta\in[0,\inf_x\alpha(x))$
 and
$\xi_{0,\beta}^{\alpha}\sim\mu^\alpha_\beta$.
Let $(x_t)_{t>0}$ be a $\Z$-valued 
family and assume the limit $\lim_{t\to+\infty}t^{-1}x_t$
exists. Then
\[
\lim_{t\to\infty}t^{-1}\Gamma^\alpha_{x_t}\left(t,\xi_{0,\beta}^{\alpha}\right)=(p-q)\beta
\quad\mbox{ in }\,\,L^1(\Prob_0\otimes\Prob)
\]
\end{lemma}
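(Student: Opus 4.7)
The plan is to combine a semimartingale (Dynkin) decomposition of the current with the time-ergodicity of the stationary process $(\xi_s^\alpha)_{s\ge 0}$ under $\mu^\alpha_\beta$. A preliminary computation from \eqref{eq:theta-lambda} shows that $\int g(n)\,\theta_\gamma(dn)=\gamma$ for every $\gamma\in[0,1)$: the factor $g(n)$ collapses one term of the product defining $g(n)!$, so the sum telescopes into $\gamma Z(\gamma)$. Consequently, for every $y\in\Z$,
\begin{equation*}
\Exp_{\mu^\alpha_\beta}\bigl[p\alpha(y)g(\xi(y))-q\alpha(y+1)g(\xi(y+1))\bigr]=(p-q)\beta,
\end{equation*}
so the mean instantaneous rate of signed current across any bond is constant, equal to $(p-q)\beta$, independent of the bond.

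Next I would write the current across the fixed site $x_t$ as
\begin{equation*}
\Gamma^\alpha_{x_t}(t,\xi_{0,\beta}^\alpha)=M^{(t)}_t+\int_0^t\phi_t(\xi_s^\alpha)\,ds,
\end{equation*}
where $\phi_t(\xi):=p\alpha(x_t)g(\xi(x_t))-q\alpha(x_t+1)g(\xi(x_t+1))$ and $M^{(t)}$ is the compensating mean-zero pure-jump martingale built from the atoms of the Harris system \eqref{def_omega} affecting the bond $(x_t,x_t+1)$. Since $g\le 1$ and $\alpha\le 1$, the predictable quadratic variation of $M^{(t)}$ at time $t$ is bounded by $t$, so Doob's inequality gives $\Exp[(M^{(t)}_t)^2]\le t$, and hence $t^{-1}M^{(t)}_t\to 0$ in $L^2$ (and therefore in $L^1$).

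The remaining task is to show $t^{-1}\int_0^t\phi_t(\xi_s^\alpha)\,ds\to(p-q)\beta$ in $L^1$. Time-stationarity already gives the correct mean at every $s$, so what is needed is a vanishing second-moment estimate for the time average. Since $\mu^\alpha_\beta$ is a product measure, it is extremal among the invariant probabilities of the AZRP dynamics, by an adaptation to site-disorder of the classical argument of \cite{and}; the stationary process is therefore time-ergodic, and for a \emph{fixed} bounded local observable Birkhoff's theorem would deliver $L^1$ convergence of the time average to its mean. Because $\phi_t$ depends on $t$ through $x_t$, a uniform-in-$y$ version is required. I would obtain this by expanding the variance of $t^{-1}\int_0^t\phi^{(y)}(\xi_s^\alpha)\,ds$ (with $\phi^{(y)}$ the analogue of $\phi_t$ based at $y$) and bounding the time correlations of the two-site observable $\phi^{(y)}$ uniformly in $y$, using the product structure of $\mu^\alpha_\beta$ together with the finite propagation property of Lemma \ref{lemma_finite_prop}.

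The hardest point is this uniform time-ergodicity: Birkhoff alone handles each observable separately, whereas here the observable $\phi_t$ drifts with $t$ along the sequence $(x_t)$. Bridging this gap relies on the product structure of $\mu^\alpha_\beta$ and a quantitative mixing input such as finite propagation, rather than on any averaging assumption on the environment $\alpha$ (none is made in the statement).
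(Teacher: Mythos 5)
Your first two steps are fine: the computation $\int g\,d\theta_\gamma=\gamma$ (hence $\Exp_{\mu^\alpha_\beta}[\alpha(y)g(\xi(y))]=\beta$ for every $y$, since $\beta/\alpha(y)<1$) and the compensator decomposition with $\Exp[(M^{(t)}_t)^2]\le t$ are correct, and they reduce the lemma to showing that $t^{-1}\int_0^t\phi_t(\xi^\alpha_s)\,ds\to(p-q)\beta$ in $L^1$. The genuine gap is in how you close this last step. You propose to bound the variance of the time average by a decay of the temporal correlations of the two-site observable $\phi^{(y)}$ \emph{uniformly in $y$}, ``using the product structure of $\mu^\alpha_\beta$ together with the finite propagation property of Lemma \ref{lemma_finite_prop}''. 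Neither tool delivers this: the product structure of $\mu^\alpha_\beta$ controls spatial correlations at a \emph{fixed} time and says nothing about $\mathrm{Cov}\bigl(g(\xi_0(y)),g(\xi_s(y))\bigr)$, while finite propagation is a statement about spatial localization of the influence of initial data, not about decorrelation in time at a fixed site. Quantitative temporal mixing of local observables for conservative dynamics is a hard problem in its own right, and ergodicity (Birkhoff) gives no rate, hence no uniformity over the drifting family $\phi^{(x_t)}$. As written, the argument therefore does not go through. (Your appeal to extremality of $\mu^\alpha_\beta$ in the disordered setting is also an unproved input, though one that can reasonably be cited.)

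There is a cheaper and standard way to handle the moving site, which avoids any uniform mixing estimate: use the conservation identity \eqref{difference_currents}. For $x_t>0$, $\Gamma^\alpha_{x_t}(t,\xi^\alpha_0)-\Gamma^\alpha_0(t,\xi^\alpha_0)=\sum_{x=1}^{x_t}\xi^\alpha_0(x)-\sum_{x=1}^{x_t}\xi^\alpha_t(x)$; since $\xi^\alpha_0$ and $\xi^\alpha_t$ both have law $\mu^\alpha_\beta$, the two sums have the same mean and each has variance $O(x_t)=O(t)$ (the site marginals $\theta_{\beta/\alpha(x)}$ have uniformly bounded variance because $\beta/\alpha(x)\le\beta/\inf_x\alpha(x)<1$), so the difference is $O(\sqrt t)$ in $L^1$ and vanishes after division by $t$ (similarly for $x_t<0$, using $x_t=O(t)$). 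This reduces the lemma to the fixed-site case $x_t\equiv 0$, where your compensator argument plus the ergodic theorem for the stationary process applied to the single fixed bounded observable $\phi^{(0)}$ (granted extremality of $\mu^\alpha_\beta$) gives the $L^1$ limit $(p-q)\beta$. Note finally that the present paper does not prove this lemma; it quotes it from \cite{bmrs2} (Lemma 4.10), so any self-contained argument must supply the missing ergodicity/variance input explicitly.
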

\section{Proof of Theorem \ref{th_strong_loc_eq}}\label{sec:proof_loc_eq}
 Recall that $\rho(.,.)$ denotes 
the entropy solution to \eqref{conservation_law} with Cauchy datum $\rho_0$, 
and $(\eta^{\alpha,N}_t)_{t\geq 0}$ denotes the process under study. 
Let $\Delta>0$, $V>1$.  In relation to the finite propagation property (Lemma \ref{lemma_finite_prop}), 
we set 
\be\label{def_t0}
t_0:=t-\Delta/( 4 V),
\ee
Let also
\begin{eqnarray}\label{def:rho1}
\rho_1 & := & 
\inf\left\{
\rho(s,y):\,
s\in[t_0,t],\,y\in[u-\Delta,u+\Delta]
\right\}\\\label{def:rho2}
\rho_2 & := & \sup\left\{
\rho(s,y):\,
s\in[t_0,t],\,y\in[u-\Delta,u+\Delta]
\right\}
\end{eqnarray}
 To prove Theorem \ref{th_strong_loc_eq}, using the couplings introduced in Subsection \ref{subsec:Harris}, we compare the process $(\eta^{\alpha,N}_.)$ 
  to the stationary processes $(\xi^{\alpha,\rho}_.)$ introduced 
  in Subsection \ref{subsec:Harris} (see \eqref{ordered_stat}) for suitable values of $\rho$. 
 Recalling the set $\mathcal R$ in  Lemma \ref{lemma_properties_flux}, 
Theorem \ref{th_strong_loc_eq} is mainly a consequence of the following result 
\begin{proposition}\label{prop_nomore_disc}
Assume  $\rho_0\in\mathcal R$   
is such that
\be\label{rho0_rho3}
\rho_0<\rho_1  
\ee
Then 
\begin{eqnarray}
\label{coupling_2}
\lim_{N\to+\infty}\Prob\left(
\eta^{\alpha,N}_{Nt}(y)\geq\xi_{Nt}^{\alpha,\rho_0}(y),\,\forall y\in\Z\cap [N(u-\Delta/4),N(u+\Delta/4)]
\right)=1
\end{eqnarray}
Similarly, if $\rho_3\in\mathcal R$ is such that 
\be\label{rho0_rho3_bis}\rho_2<\rho_3\ee
then
\begin{eqnarray}
\label{coupling_1}
\lim_{N\to+\infty}\Prob\left(
\eta^{\alpha,N}_{Nt}(y)\leq\xi_{Nt}^{\alpha,\rho_3}(y),\,
\forall y\in\Z\cap [N(u-\Delta/4),N(u+\Delta/4)]
\right)=1
\end{eqnarray}
\end{proposition}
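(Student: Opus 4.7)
The plan is to combine a finite-propagation reduction with a hydrodynamic excess-mass estimate and a current-comparison argument, together with the genuine nonlinearity hypothesis $\rho_0\in\mathcal{R}$. I describe the proof of \eqref{coupling_2}; assertion \eqref{coupling_1} follows symmetrically. Set $a=\lceil N(u-\Delta/4)\rceil$, $b=\lfloor N(u+\Delta/4)\rfloor$, $a'=\lceil N(u-\Delta/2)\rceil$, $b'=\lfloor N(u+\Delta/2)\rfloor$. With $V>1$ chosen in \eqref{def_t0} so that $V(Nt-Nt_0)=N\Delta/4$, Lemma \ref{lemma_finite_prop} implies that, outside an event of probability $e^{-cN}$, the values of $\eta^{\alpha,N}_{Nt}$ and of $\xi^{\alpha,\rho_0}_{Nt}$ on $[a,b]$ depend only on the restrictions of $\eta^{\alpha,N}_{Nt_0}$ and $\xi^{\alpha,\rho_0}_{Nt_0}$ to $[a',b']$ and on the Harris marks in the associated light cone; this restricts the analysis to the time window $[Nt_0,Nt]$.

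Next, I establish a macroscopic excess of $\eta$-mass over $\xi^{\alpha,\rho_0}$-mass on $[a',b']$ at the intermediate time $Nt_0$. By Theorem \ref{th_hydro} and \eqref{def:rho1}, for any $\varepsilon>0$, with probability tending to one,
\[
\sum_{x=a'}^{b'}\eta^{\alpha,N}_{Nt_0}(x)\geq(\rho_1-\varepsilon)N\Delta.
\]
By stationarity of $\xi^{\alpha,\rho_0}_{\cdot}$ and Lemma \ref{lemma_inter}, the corresponding sum for $\xi^{\alpha,\rho_0}_{Nt_0}$ lies within $\varepsilon N\Delta$ of $\rho_0 N\Delta$ with probability tending to one. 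Picking $\varepsilon<(\rho_1-\rho_0)/3$ yields an initial $\eta$-excess of order $N$ with strictly positive rate. I then exploit the density of $\mathcal{R}$ in $[0,\rho_c)$ provided by Lemma \ref{lemma_properties_flux} to fix $\rho_0'\in\mathcal{R}\cap(\rho_0,\rho_1)$ and couple the stationary process $\xi^{\alpha,\rho_0'}_{\cdot}$ with both $\eta^{\alpha,N}_{\cdot}$ and $\xi^{\alpha,\rho_0}_{\cdot}$ through the same Harris system; by \eqref{ordered_stat}, $\xi^{\alpha,\rho_0}_t\leq\xi^{\alpha,\rho_0'}_t$ almost surely, so it suffices to prove the pointwise bound with $\rho_0'$ in place of $\rho_0$, at a density where $f$ is strictly convex or strictly concave.

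Applying Corollary \ref{corollary_current} with the constant paths $x_{\cdot}\equiv a$ and $x_{\cdot}\equiv b$ on $[Nt_0,Nt]$ to the coupled pair $(\eta^{\alpha,N}_{\cdot},\xi^{\alpha,\rho_0'}_{\cdot})$, and combining with the identity \eqref{difference_currents} and the asymptotic currents furnished by the hydrodynamic limit (Theorem \ref{th_hydro}) and by Lemma \ref{current_critical}, one concludes that the total $\eta$-mass on $[a,b]$ at time $Nt$ exceeds the $\xi^{\alpha,\rho_0'}$-mass by an amount of order $N$ with high probability. The concluding step converts this $L^1$-type excess into the pointwise statement $\eta^{\alpha,N}_{Nt}(y)\geq\xi^{\alpha,\rho_0'}_{Nt}(y)$ for every $y\in[a,b]$: using $\rho_0'\in\mathcal{R}$, any isolated positive discrepancy $(\xi^{\alpha,\rho_0'}-\eta^{\alpha,N})^+$ above the stationary background behaves as a second-class particle with strictly positive asymptotic velocity equal to the characteristic speed $(p-q)f'(\rho_0')$, so a speed-comparison argument in the spirit of \cite{bmrs1,bmrs2} forces the few surviving discrepancies present at time $Nt_0$ on $[a',b']$ to be either swept out of $[a,b]$ or annihilated by the abundant $(\eta^{\alpha,N}-\xi^{\alpha,\rho_0'})^+$-particles before time $Nt$.

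The principal obstacle is this last step: upgrading an integrated excess-mass control into a pointwise domination. It is precisely here that the genuine nonlinearity $\rho_0\in\mathcal{R}$ becomes indispensable, providing the strictly positive characteristic speed that sweeps residual discrepancies out of the spatial window within the allotted time. Rendering this ballistic dispersal quantitative and uniform in the quenched disordered setting is the delicate core of the argument, and draws on the second-class-particle speed estimates developed in \cite{bmrs1,bmrs2}.
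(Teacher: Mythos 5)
Your overall scaffolding (finite propagation reduction to the window $[N(u-\Delta/2),N(u+\Delta/2)]$ and the time interval $[Nt_0,Nt]$, comparison with coupled stationary processes, an order-$N$ excess of $\eta$-mass over $\xi^{\alpha,\rho_0}$-mass coming from Theorem \ref{th_hydro} and the law of large numbers) matches the setup of the paper, but the decisive step is missing. You acknowledge it yourself: converting the integrated excess-mass control into the pointwise domination $\eta^{\alpha,N}_{Nt}(y)\geq\xi^{\alpha,\rho_0}_{Nt}(y)$ on the whole window is "the delicate core", and your proposed mechanism for it --- that each surviving discrepancy $(\xi-\eta)^+$ moves ballistically at the characteristic speed $f'(\rho_0')$ and is therefore swept out of $[a,b]$ or annihilated, by appeal to "second-class-particle speed estimates developed in \cite{bmrs1,bmrs2}" --- is not an available tool and would not suffice even if it were. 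First, no such quenched speed estimate for a single discrepancy in the disordered zero-range process exists in those references, and the discrepancy sits in a background of density close to $\rho_1$, not $\rho_0'$, so its drift is not $f'(\rho_0')$ in any straightforward sense. Second, a merely positive characteristic speed cannot sweep a discrepancy out of a window of macroscopic width $\Delta/2$ in the allotted time $t-t_0=\Delta/(4V)$ with $V>1$, since $f'$ may be arbitrarily small; the proposition is about annihilation, not exit. Third, you misattribute the role of the hypothesis $\rho_0\in\mathcal R$: genuine nonlinearity does not give a positive speed, it gives that characteristic (class) speeds at \emph{nearby densities differ}, which is what forces crossings.

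The paper's proof fills exactly this gap with a two-stage coalescence-counting argument that has no counterpart in your proposal. The excess $\eta$-particles are split into classes at densities $\rho_0+\varepsilon/2$, $\rho_0+\varepsilon$, $\rho_0+3\varepsilon/2$ (Lemma \ref{lemma_classes}); by strict convexity or concavity of $f$ near $\rho_0$ the effective velocities $v_i=[f(\rho^i)-f(\rho^{i-1})]/(\rho^i-\rho^{i-1})$ of consecutive classes differ by at least $C'\varepsilon$ (inequality \eqref{speeds}), so on triples of adjacent mesoscopic intervals every $(\xi-\eta)$ discrepancy in the middle interval is crossed either by all class-$2$ particles from the left or all class-$3$ particles from the right (Lemmas \ref{lemma_mut1}--\ref{lemma_mut2}), and each crossing forces a coalescence by the coupling rules of Subsection \ref{sec:class} (Corollary \ref{cor_mut1}). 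Counting these coalescences yields the quantitative decay estimate of Proposition \ref{prop_evol_disc_finite}, which first drives the discrepancy density to $o(N)$ (Proposition \ref{prop_mean_disc}) and is then applied a second time on the scale $\delta=\varepsilon\Delta/(\kappa V)$ to eliminate \emph{all} remaining discrepancies in the window (Subsection \ref{subsec:prop_implies}), because the alternative events in Corollary \ref{cor_mut1} would require order-$N$ discrepancies. Note also that your order-$N$ excess of total mass by itself does not even bound the number of $(\xi-\eta)$ discrepancies, since it is compatible with order-$N$ discrepancies of both signs; so without the coalescence machinery your argument has no route from the macroscopic comparison to the pointwise statement.
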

Thus
statement \eqref{coupling_1}, resp. \eqref{coupling_2}, means that 
by time $Nt$, there is no more $(\eta^{\alpha,N}-\xi^{\alpha,\rho_3})$,
resp. $(\xi^{\alpha,\rho_0}-\eta^{\alpha,N})$ 
discrepancy in 
a  neighborhood of $\lfloor Nu\rfloor$  of size order $N$.  \\ \\
 Proposition \ref{prop_nomore_disc} will be established in the next subsections.
We now conclude the proof of Theorem \ref{th_strong_loc_eq} given the proposition. \\
\begin{proof}{Theorem}{th_strong_loc_eq}
 Since $\psi$ is local, there exists a finite subset $S$ of $\Z$ such that $\psi(\eta)$ depends only 
on the restriction of $\eta$ to $S$. Since $\psi$ is bounded, 
for all $\eta,\xi\in{\mathbf X}$, 
\be\label{lipsi}
|\psi(\eta)-\psi(\xi)|\leq 2||\psi||_\infty\sum_{x\in S}|\eta(x)-\xi(x)|
\ee
We first prove \eqref{eq:loc_eq_upper}. 
Because
$\mathcal R$ is dense in $[0,\rho_c]$ and $\rho^*(t,u)<\rho_c$, we can find $\Delta>0$ and  
$\rho_3\in\mathcal R$ arbitrarily close to $\rho^*(t,u)$ such that $\rho^*(t,u)<\rho_2<\rho_3$.
Let $E_N$ be the 
event in \eqref{coupling_1}.
 Then for $N$ large enough, 
$x_N+S\subset\Z\cap[N(u-\Delta/4),N(u+\Delta/4)]$.  
We now write
\begin{eqnarray}
\Exp\psi\left(\tau_{x_N}\eta^{\alpha,N}_{Nt}\right) 
& \leq  &
\Exp\left[1_{E_N}\psi\left(\tau_{x_N}\xi^{\alpha,\rho_3}_{Nt}\right)\right]
+||\psi||_\infty(1-\Prob(E_N))\nonumber\\
& \leq  &
\Exp\psi\left(\tau_{x_N}\xi^{\alpha,\rho_3}_{Nt}\right)+||\psi||_\infty(1-\Prob(E_N))\nonumber\\
& \leq & \int_{\mathbf X}\psi(\eta)d\mu^{\tau_{x_N}\alpha,\rho^*(t,u)}(\eta)
+2||\psi||_{\infty}|S|C(\rho_3-\rho^*(t,u))\nonumber\\
&&+||\psi||_\infty(1-\Prob(E_N))\label{wenowwrite}
\end{eqnarray}
 for some constant $C>0$. On the first line of \eqref{wenowwrite}, 
we used the assumption that $\psi$ is nondecreasing. 
The desired result follows,  since $\Prob(E_N)\to 1$ by \eqref{coupling_1}, and $\rho_3$ is arbitrarily close to $\rho^*(t,u)$.
The last inequality in \eqref{wenowwrite} follows from
\begin{eqnarray}
&&\!\!\!\!\!\!\!\!\!\!\!\!\!\!\!\!\!\!
\Exp\psi\left(\tau_{x_N}\xi^{\alpha,\rho_3}_{Nt}\right)
-\int_{\mathbf X}\psi(\eta)d\mu^{\tau_{x_N}\alpha,\rho^*(t,u)}(\eta)\nonumber\\
& = & \Exp\left[
\psi\left(\tau_{x_N}\xi^{\alpha,\rho_3}_{0}\right)-
\psi\left(\tau_{x_N}\xi^{\alpha,\rho^*(t,u)}_{0}\right)
\right]\nonumber\\
& \leq & 2||\psi||_\infty\Exp
\sum_{z\in S}\left[\xi^{\alpha,\rho_3}_{0}( x_N+ z)-\xi^{\alpha,\rho^*(t,u)}_{0}( x_N+ z)
\right]\nonumber\\
& \leq & 2||\psi||_\infty|S|C(\rho_3-\rho^*(t,u))\label{inequality_follows}
\end{eqnarray}
 where we used stationarity for the equality, and for the first inequality, we used \eqref{lipsi}, $\rho^*(t,u)<\rho_3$ and \eqref{ordered_stat}.
For the last inequality, we write the right-hand side of \eqref{inequality_follows} as 
\begin{eqnarray} 2||\psi||_\infty
\sum_{z\in S}\left\{
R\left(
\frac{\overline{R}^{-1}(\rho_3)}{\alpha(x_N+z)}
\right)-R\left(
\frac{\overline{R}^{-1}(\rho^*(t,u))}{\alpha(x_N+z)}
\right)
\right\}\label{forthelast}
\end{eqnarray}
and we use Lemma \ref{lemma_properties_flux}, $R\in C^1([0,1))$ (cf. \eqref{mean_density}),
and the fact that $\rho^*(t,u)<\rho_3<\rho_c$.  \\ \\
Now we prove
\eqref{eq:loc_eq_super_lower}. 
We can find $\Delta$ small enough and $\rho_0\in\mathcal R$ 
 arbitrarily close to $\rho_*(t,u)$  so that  $\rho_0<\rho_1<\rho_*(t,u)$. 
Let $F_N$ denote the event in \eqref{coupling_2}.  Arguing as in \eqref{wenowwrite} 
we obtain, for some other constant $C'>0$,
\begin{eqnarray}
\Exp\psi\left(\tau_{x_N}\eta^{\alpha,N}_{Nt}\right) & \geq  &
\Exp\left[1_{F_N}\psi\left(\tau_{x_N}\xi^{\alpha,\rho_0}_{Nt}\right)\right]\nonumber\\
& \geq  &
\Exp\psi\left(\tau_{x_N}\xi^{\alpha,\rho_0}_{Nt}\right)-||\psi||_\infty(1-\Prob(F_N))
\nonumber\\
& \geq & \int_{\mathbf X}\psi(\eta)d\mu^{\tau_{x_N}\alpha,\rho_*(t,u)}(\eta)
-2||\psi||_{\infty}|S|C'(\rho_*(t,u)-\rho_0)\nonumber\\
&&-||\psi||_\infty(1-\Prob(F_N))
\end{eqnarray}
which implies \eqref{eq:loc_eq_super_lower}, since $\Prob(F_N)\to 1$ by \eqref{coupling_2}.\\ \\
We finally prove \eqref{eq:loc_eq}.
Since $\rho(t,u)<\rho_c$, $\rho(.,.)$ is continuous 
at $(t,u)$, and $\mathcal R$ is dense, we can find $\Delta>0$ and $\rho_0, \rho_3\in\mathcal R$ such that
$\rho_0<\rho_1<\rho_2<\rho_3$ and $\rho_3-\rho_0$ is arbitrarily small.
We now write
\begin{eqnarray}
&&\!\!\!\!\!\!\!\!\!\!\!\!\!\!\!\!\!\!\left|
\Exp\psi\left(\tau_{x_N}\eta^{\alpha,N}_{Nt}\right)
-\int_{\mathbf X}\psi(\eta)d\mu^{\tau_{x_N}\alpha,\rho(t,u)}(\eta)
\right|\nonumber\\ &\leq  & \left|
\Exp\psi\left(\tau_{x_N}\eta^{\alpha,N}_{Nt}\right)
-\Exp\psi\left(\tau_{x_N}\xi^{\alpha,\rho_0}_{Nt}\right)
\right|\nonumber\\
&&+ \left|
\Exp\psi\left(\tau_{x_N}\xi^{\alpha,\rho_0}_{Nt}\right)
-\int_{\mathbf X}\psi(\eta)d\mu^{\tau_{x_N}\alpha,\rho(t,u)}(\eta)
\right|  \label{using_lipsi}
\end{eqnarray}
Proceeding as in \eqref{inequality_follows}, the second term on the r.h.s. 
of \eqref{using_lipsi} is bounded by 
\[
2||\psi||_\infty|S|C''(\rho(t,u)-\rho_0)\leq 2||\psi||_\infty|S|C''(\rho_3-\rho_0)
\]
For the first term on the r.h.s. of \eqref{using_lipsi}, we insert the indicator  
of $E_N\cap F_N$. Since on this event we have 
$\xi^{\alpha,\rho_0}_{Nt}\leq\eta^{\alpha,N}_{Nt}\leq \xi^{\alpha,\rho_3}_{Nt}$, 
using \eqref{lipsi} again, we can bound this term by 
\[
2||\psi||_\infty|S|C'''(\rho_3-\rho_0)+2||\psi||_\infty\left[1-\Prob(E_N\cap F_N)\right]
\]
This concludes the proof of \eqref{eq:loc_eq}.
  \end{proof}
\subsection{Plan of proof of Proposition \ref{prop_nomore_disc}}\label{subsubsec:planproof}
 {}From now on in this section, to lighten the notation, since the disorder $\alpha$
is fixed, we write $(\eta^{N}_s)_{s \geq 0}$ instead of
$(\eta^{\alpha,N}_s)_{s \geq 0}$, and
$(\xi^{\rho}_s)_{s \geq 0}$  (for $\rho\geq 0$) instead of
$(\xi^{\alpha,\rho}_s)_{s \geq 0}$. \\ 
We will derive 
Proposition \ref{prop_nomore_disc} from a similar result in which $\eta^N_.$ 
and  $\xi^{\rho}_.$  are replaced by processes 
$\widetilde{\eta}^N_.$ and  $\widetilde{\xi}^{\rho}_.$  defined as follows.
At time $t_0$, we define  truncated versions of 
$\eta^N_{Nt_0}$ and 
$\xi^{\rho}_{Nt_0}$ around site $\lfloor Nu\rfloor$ by 
\begin{eqnarray}
\widetilde{\eta}^N_{Nt_0}(y) & := & \eta^N_{Nt_0}(y)\indicator{
\left\{
\Z\cap[N(u-\Delta/2),N(u+\Delta/2)]
\right\}
}(y)\label{def_etatilde}\\
\widetilde{\xi}^{\rho}_{Nt_0}(y) & := & \xi^\rho_{Nt_0}(y)\indicator{
\left\{
\Z\cap[N(u-\Delta/2),N(u+\Delta/2)]
\right\}
}(y)\label{def_xitilde}
\end{eqnarray}
Then, on the time interval $[Nt_0,Nt]$, we define $\widetilde{\eta}^N_.$ 
and  $\widetilde{\xi}^\rho_.$  as the evolved processes starting from the above 
initial configurations. {}From now on, our purpose will be to establish the following result.
\begin{proposition}\label{prop_nomore_disc_2}
Let $\Delta>0$, $\rho_0$, $\rho_1$, $\rho_2$, $\rho_3$ 
be as in Proposition \ref{prop_nomore_disc}. 
Then
\begin{eqnarray}
\label{coupling_1_tilde}
\lim_{N\to+\infty}\Prob\left(
\widetilde{\eta}^N_{Nt}(y)\leq{\xi}^{\rho_3}_{Nt}(y),\quad\forall y\in\Z
\right)=1\\
\label{coupling_2_tilde}
\lim_{N\to+\infty}\Prob\left(
{\eta}^N_{Nt}(y)\geq\widetilde{\xi}_{Nt}^{\rho_0}(y),\quad\forall y\in\Z
\right)=1
\end{eqnarray}
\end{proposition}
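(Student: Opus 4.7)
The plan is to prove \eqref{coupling_1_tilde}; the statement \eqref{coupling_2_tilde} is then proved symmetrically, with the roles of the truncated and stationary processes exchanged and $\rho_3 > \rho_2$ replaced by $\rho_0 < \rho_1$. Throughout, I couple $\widetilde\eta^N$ and $\xi^{\rho_3}$ on $[Nt_0, Nt]$ via the basic coupling of Subsection~\ref{subsec:Harris}. Because $\widetilde\eta^N_{Nt_0}$ is supported in the bounded box $B_N := \Z \cap [N(u - \Delta/2), N(u + \Delta/2)]$, the total positive discrepancy
\[
D^+_s := \sum_{y \in \Z}[\widetilde\eta^N_s(y) - \xi^{\rho_3}_s(y)]^+
\]
is almost surely finite at $s = Nt_0$ and non-increasing in $s$ by attractiveness (under the basic coupling, $(\widetilde\eta^N - \xi^{\rho_3})^+$ discrepancies can only annihilate via coalescence, never be created; see Subsection~\ref{sec:class}). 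Thus proving \eqref{coupling_1_tilde} amounts to $\Prob(D^+_{Nt} = 0) \to 1$.

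First I would establish the macroscopic bound $\widetilde\rho(s, \cdot) \leq \rho_3$. Applying Theorem~\ref{th_hydro} to the restarted truncated process $\widetilde\eta^N$ on the time interval $[t_0, t]$, whose limit density profile at time $t_0$ is $\widetilde\rho_0(y) := \rho(t_0, y)\mathbf{1}_{(u - \Delta/2,\, u + \Delta/2)}(y)$, the hydrodynamic limit $\widetilde\rho$ is the entropy solution to \eqref{conservation_law} starting from $\widetilde\rho_0$. Since $\widetilde\rho_0 \leq \rho_2 < \rho_3$ everywhere and the constant profile $\rho \equiv \rho_3$ is a stationary solution of \eqref{conservation_law}, the $L^1$-contraction (equivalently, the comparison principle) for entropy solutions of scalar conservation laws gives $\widetilde\rho(s, \cdot) \leq \rho_3$ for every $s \in [t_0, t]$. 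Combined with the stationarity of $\xi^{\rho_3}$ at density $\rho_3$, this delivers $N^{-1}\Exp[D^+_{Nt}] \to 0$, that is, an $o(N)$ macroscopic bound.

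To upgrade this $o(N)$ bound to the pointwise statement $D^+_{Nt} = 0$, I would combine the current-comparison tools of Subsection~\ref{subsec_currents} with the genuine nonlinearity of $f$ near $\rho_3$, which follows from $\rho_3 \in \mathcal R$ via Lemma~\ref{lemma_properties_flux}. Finite propagation (Lemma~\ref{lemma_finite_prop}) first confines any surviving excess discrepancy to a bounded enlargement of $B_N$ on the space scale~$N$. Applying Corollary~\ref{corollary_current} to the coupled pair along suitably chosen moving paths $x_.$ controls the accumulated positive excess flux of $\widetilde\eta^N$ over $\xi^{\rho_3}$ through the right-hand side of \eqref{current_comparison_2}. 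On one hand, Lemma~\ref{current_critical} pins the asymptotic current of $\xi^{\rho_3}$ across a fixed-velocity path at the equilibrium value $(p - q)\overline R^{-1}(\rho_3)$; on the other hand, Proposition~\ref{current_source} bounds from above the current produced by the source-like configuration $\widetilde\eta^N_{Nt_0}$. Strict convexity or concavity of $f$ near $\rho_3$ produces a strictly positive flux mismatch between the truncated process (with effective density $\leq \rho_2 < \rho_3$) and the stationary background; integrated over the time window of length $N\Delta/(4V)$, this mismatch dominates the initial $L^1$-excess of discrepancies, precluding any surviving $(\widetilde\eta^N - \xi^{\rho_3})^+$ discrepancy at time~$Nt$.

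The hardest step is this last one: converting the strict convexity or concavity of $f$ near $\rho_3$ into a quantitative microscopic estimate that upgrades the macroscopic $L^1$-smallness to the pointwise vanishing of $D^+_{Nt}$. The disorder adds a further complication, since the equilibrium current of $\xi^{\rho_3}$ depends on $\alpha$ via $\overline R^{Q_0(\alpha)}$; Assumption~\ref{assumption_ergo} is needed to ensure quenched concentration of currents on their averaged limits, and a careful tuning of the estimates to the length $\Delta/(4V)$ of the time window is what makes the ensuing bounds tight enough to conclude.
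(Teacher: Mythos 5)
Your reduction of \eqref{coupling_1_tilde} to showing that the positive discrepancy count $D^+_{Nt}$ vanishes is correct, and your observation that $D^+$ is finite and nonincreasing matches the paper's setup. But the first key step of your argument contains a genuine gap: you claim that the comparison principle for entropy solutions ($\widetilde\rho(s,\cdot)\le\rho_3$) together with stationarity of $\xi^{\rho_3}$ ``delivers'' $N^{-1}\Exp[D^+_{Nt}]\to 0$. Hydrodynamics controls only the \emph{signed} sums $\sum_{y\in NI}\bigl(\widetilde\eta^N_s(y)-\xi^{\rho_3}_s(y)\bigr)$ over macroscopic blocks, whereas $D^+_s$ is a sitewise positive part: the two coupled configurations can carry order-$N$ discrepancies of both signs that cancel in every block average while both empirical profiles obey the macroscopic ordering. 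Under the basic coupling there is no microscopic order between $\widetilde\eta^N_{Nt_0}$ and $\xi^{\rho_3}_{Nt_0}$, so attractiveness gives no help either. This $o(N)$ bound is exactly the content of Proposition \ref{prop_mean_disc}, and the paper has to work hard for it (Proposition \ref{prop_evol_disc_finite}: subdividing $[u-\Delta,u+\Delta]$ into mesoscopic intervals, building intermediate classes of $\eta$ particles with densities $\rho_0+i\varepsilon/2$ via Lemma \ref{lemma_classes}, exploiting strict convexity/concavity of $f$ near $\rho_0$ or $\rho_3$ to get distinct class velocities, and counting forced coalescences). Indeed the introduction points out that this spatially averaged statement would itself imply an a priori two-block estimate beyond \cite{kos}; it cannot be a one-line consequence of the hydrodynamic limit plus $L^1$-contraction.

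Your second step is also not a proof as it stands: current comparisons (Corollary \ref{corollary_current}, Lemma \ref{current_critical}, Proposition \ref{current_source}) again control \emph{net} fluxes across paths, i.e.\ signed quantities, and a time-integrated flux mismatch cannot by itself rule out a single surviving discrepancy. The paper's mechanism in Subsection \ref{subsec:prop_implies} is different: having only $o(N)$ $\widetilde\gamma$ discrepancies at the intermediate time $Nt_1$, it reapplies the three-interval crossing argument (Corollary \ref{cor_mut1}, with $\delta=\varepsilon\Delta/(\kappa V)$ tuned so the window $[t_1,t]$ suffices); if some $\widetilde\gamma$ particle survived in an interval, then all second-class particles to its left or all third-class particles to its right would have coalesced, which would require order $N$ $\widetilde\gamma$ particles, a contradiction. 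Since the number of intervals is fixed, a union bound concludes. To repair your proposal you would need to replace the comparison-principle step by an argument of this quantitative coalescence type (or an equivalent two-block-style estimate valid without global strict convexity), and to replace the flux-mismatch heuristic in the last step by a mechanism that forces individual surviving discrepancies to coalesce.
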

Recalling the definition \eqref{def_t0} of $t_0$, Proposition \ref{prop_nomore_disc} 
is deduced from Proposition \ref{prop_nomore_disc_2} by applying the
finite propagation property (Lemma \ref{lemma_finite_prop}) to the 
pairs 
$(\eta^N_.,\widetilde{\eta}^N_.)$
and  
$(\xi^\rho_.,\widetilde{\xi}^\rho_.)$
 for $\rho\in\{\rho_0,\rho_3\}$.  \\ \\
To prove Proposition \ref{prop_nomore_disc_2}, the main step will be to establish 
a weaker statement, namely that the number of $({\eta}-\widetilde{\xi}^{\rho_3})$, resp. $(\widetilde\xi^{\rho_0}-\widetilde\eta)$ discrepancies, is $o(N)$:
\begin{proposition}\label{prop_mean_disc}
Let $\Delta>0$, $\rho_0$, $\rho_1$, $\rho_2$, $\rho_3$ be as in Proposition \ref{prop_nomore_disc}.
Then, for every $s\in(t_0,t]$ and $\iota>0$,
\begin{eqnarray}
\label{coupling_1_mean}
\lim_{N\to+\infty}\Prob\left(\left\{
(2\Delta N)^{-1}\sum_{
y\in\Z
}
\left[
\widetilde{\eta}^N_{Ns}(y)-{\xi}^{\rho_3}_{Ns}(y)
\right]^+>\iota
\right\}\right)=0\\
\label{coupling_2_mean}
\lim_{N\to+\infty}\Prob\left(\left\{
(2\Delta N)^{-1}\sum_{
y\in\Z
}
\left[
\widetilde{\xi}^{\rho_0}_{Ns}(y)-{\eta}^N_{Ns}(y)
\right]^+>\iota
\right\}\right)=0
\end{eqnarray}
\end{proposition}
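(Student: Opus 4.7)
We propose the following approach to \eqref{coupling_2_mean}; the estimate \eqref{coupling_1_mean} is handled symmetrically. The plan is to combine a current balance for coupled processes with an interpolation argument that exploits the strict monotonicity and genuine nonlinearity of $f$ on $[0,\rho_c)$.

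\textbf{Coupling and localization.} Using the Harris construction of Subsection \ref{subsec:Harris}, we couple $\eta^N_\cdot$, $\widetilde{\xi}^{\rho_0}_\cdot$, and the untruncated stationary process $\xi^{\rho_0}_\cdot$ on a common probability space via the basic coupling \eqref{coupled_generator}. By definition \eqref{def_xitilde} we have $\widetilde{\xi}^{\rho_0}_{Nt_0}\leq\xi^{\rho_0}_{Nt_0}$ pointwise, which propagates in time by attractivity so that $\widetilde{\xi}^{\rho_0}_s\leq\xi^{\rho_0}_s$ for all $s\geq Nt_0$. The choice $t_0=t-\Delta/(4V)$ together with the finite propagation property (Lemma \ref{lemma_finite_prop}) ensures that with probability tending to one, the support of $\widetilde{\xi}^{\rho_0}_{Ns}$ is contained in $J_N:=[N(u-3\Delta/4),N(u+3\Delta/4)]$, and that $\widetilde{\xi}^{\rho_0}_{Ns}(y)=\xi^{\rho_0}_{Ns}(y)$ on a slightly narrower subinterval. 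Outside $J_N$ the positive part $[\widetilde{\xi}^{\rho_0}_{Ns}-\eta^N_{Ns}]^+$ vanishes, so the sum in \eqref{coupling_2_mean} reduces to a sum over $J_N$.

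\textbf{Current balance.} Applying the conservation identity \eqref{difference_currents} to $\eta^N$ and $\xi^{\rho_0}$ (the latter identified with $\widetilde{\xi}^{\rho_0}$ in the interior of $J_N$) on the interval $[Nt_0,Ns]$ and subtracting yields
\[
\frac{1}{N}\sum_{y\in J_N}\bigl(\xi^{\rho_0}_{Ns}(y)-\eta^N_{Ns}(y)\bigr)=\frac{1}{N}\sum_{y\in J_N}\bigl(\xi^{\rho_0}_{Nt_0}(y)-\eta^N_{Nt_0}(y)\bigr)+\frac{1}{N}\bigl\{[\Gamma^\alpha_a(\xi^{\rho_0})-\Gamma^\alpha_a(\eta^N)]-[\Gamma^\alpha_b(\xi^{\rho_0})-\Gamma^\alpha_b(\eta^N)]\bigr\},
\]
with $a,b$ the endpoints of $J_N$ and $\Gamma^\alpha_x(\cdot):=\Gamma^\alpha_x(Nt_0,Ns,\cdot)$. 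Theorem \ref{th_hydro} gives $N^{-1}\Gamma^\alpha_a(\eta^N)\to(s-t_0)f(\rho(s,a/N))$ in $L^1$ (and likewise at $b$ and for the initial-time sum of $\eta^N$), Lemma \ref{current_critical} gives $N^{-1}\Gamma^\alpha_a(\xi^{\rho_0})\to(s-t_0)f(\rho_0)$, and the LLN for $\mu^{\alpha,\rho_0}$ controls the initial sums involving $\xi^{\rho_0}$. Since $\rho(s,\cdot)\geq\rho_1>\rho_0$ on the relevant macroscopic interval and $f$ is strictly increasing on $[0,\rho_c)$ by Lemma \ref{lemma_properties_flux}(i), these asymptotics combine to give a strictly negative leading-order value for the signed sum, of the form $-\int_{u-3\Delta/4}^{u+3\Delta/4}(\rho(s,u')-\rho_0)\,du'+o(1)$.

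\textbf{From signed to positive part.} Using the pointwise identity $2[a-b]^+=|a-b|+(a-b)$ and summing over $J_N$, \eqref{coupling_2_mean} reduces to showing that the $L^1$ distance $N^{-1}\sum_{y\in J_N}|\xi^{\rho_0}_{Ns}(y)-\eta^N_{Ns}(y)|$ exceeds the absolute value of the signed sum by only $o(1)$. This is the main obstacle, and precisely the new ingredient of the paper. The plan is to interpolate through a finite ladder of auxiliary stationary processes $\xi^{\rho^{(i)}}$ with densities $\rho_0<\rho^{(1)}<\cdots<\rho^{(k)}<\rho_1$, chosen via Lemma \ref{lemma_properties_flux}(ii) so that $f$ is uniformly strictly convex or concave near each $\rho^{(i)}$. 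Pairwise current comparisons (Lemma \ref{lemma_current} and Corollary \ref{corollary_current}) along moving paths with velocities $v^{(i)}=[f(\rho^{(i)})-f(\rho^{(i-1)})]/(\rho^{(i)}-\rho^{(i-1)})$ — for which the equilibrium currents of the two consecutive stationary processes match — control the $L^1$ distance between $\xi^{\rho^{(i-1)}}$ and $\xi^{\rho^{(i)}}$ by their signed difference. The genuine nonlinearity of $f$ guarantees that each such estimate is strict; telescoping over $i$, applying attractivity through the basic coupling, and refining the partition of $[\rho_0,\rho_1]$ bounds the $L^1$ distance between $\widetilde{\xi}^{\rho_0}$ and $\eta^N$ by the absolute signed sum plus $o(N)$, yielding \eqref{coupling_2_mean}. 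The chief technical difficulty, and the departure from \cite{kos}, lies precisely in handling the absence of strict convexity or concavity of $f$ via these interpolations.
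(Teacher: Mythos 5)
Your first two steps (coupling/localization and the current balance) are fine as far as they go, but they only show that the \emph{signed} sum $N^{-1}\sum_y(\widetilde{\xi}^{\rho_0}_{Ns}(y)-\eta^N_{Ns}(y))$ is strictly negative to leading order, and this says nothing about the quantity in \eqref{coupling_2_mean}: a macroscopically negative signed sum is perfectly compatible with order-$N$ many $(\widetilde{\xi}^{\rho_0}-\eta^N)$ discrepancies coexisting with even more $(\eta^N-\widetilde{\xi}^{\rho_0})$ discrepancies. The whole content of the proposition lies in your third step, and as written it is a gap, not a proof. The ladder you propose compares \emph{stationary} processes $\xi^{\rho^{(i-1)}}$ and $\xi^{\rho^{(i)}}$; these are coupled ordered, their $L^1$ distance per site is simply of order $\rho^{(i)}-\rho^{(i-1)}$, and no telescoping of such comparisons can bound the distance between $\eta^N_{Ns}$ (a non-stationary configuration) and $\widetilde{\xi}^{\rho_0}_{Ns}$ by the signed sum plus $o(N)$: attractivity gives no pointwise domination between $\eta^N_{Ns}$ and any $\xi^{\rho^{(i)}}_{Ns}$ at time $Ns$ (such a domination is precisely what Proposition \ref{prop_nomore_disc} is designed to produce, \emph{after} the present proposition is known), and current comparisons along characteristics of the stationary ladder never see the $\widetilde{\gamma}$ discrepancies you must kill.

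The missing idea is dynamical, and it is where genuine nonlinearity actually enters in the paper. Since in the basic coupling discrepancies are never created, the normalized number $\widetilde{e}_N(s)$ of $\widetilde{\gamma}$ particles is nonincreasing, and the paper proves (Proposition \ref{prop_evol_disc_finite}) that it decreases by a definite amount over each time step of length $A\delta/\varepsilon$ as long as it exceeds $o(1)$: because the hydrodynamic density on $[u-\Delta,u+\Delta]$ exceeds $\rho_0+2\varepsilon$, one can extract (Lemma \ref{lemma_classes}) second- and third-class $\eta$ particles with nearly flat profiles at densities $\rho_0+\varepsilon$ and $\rho_0+3\varepsilon/2$; by uniform convexity/concavity of $f$ near $\rho_0$ (Lemma \ref{lemma_properties_flux}(ii)) their characteristic velocities satisfy $v_3<v_2-C'\varepsilon$ (Lemmas \ref{lemma_mut1}--\ref{lemma_mut2}), so every surviving $\widetilde{\gamma}$ particle in a good subinterval (Lemma \ref{lemma_intervals}) is crossed either by all nearby second-class or by all nearby third-class particles, and each crossing forces a coalescence by rules (I)--(II) of Subsection \ref{sec:class} (Corollary \ref{cor_mut1}). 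Counting these coalescences gives \eqref{limit_disc}, and iterating the decrement over $O(1)$ many time steps between $\tau$ and $s$, together with the tightness of $\widetilde{e}_N(\tau)$, yields \eqref{coupling_2_mean}. Your velocity formula $v^{(i)}=[f(\rho^{(i)})-f(\rho^{(i-1)})]/(\rho^{(i)}-\rho^{(i-1)})$ is indeed the right object, but it must be applied to classes of $\eta$ particles interacting with the $\widetilde{\gamma}$ discrepancies through the coupling's coalescence rules, not to a family of stationary reference processes; without that mechanism (or a substitute for it) the step from the signed sum to the positive part remains unproved.
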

The final step, performed in Subsection \ref{subsec:prop_implies}, 
will be to show that Proposition \ref{prop_mean_disc} implies Proposition 
\ref{prop_nomore_disc_2}.  To this end, we will introduce an intermediate time
\be\label{def_t1} t_1:=t-\frac{\Delta}{ 8 V}\ee
and show that the presence of only $o(N)$ discrepancies at time $Nt_1$, given by Proposition \ref{prop_mean_disc} for $s=t_1$, actually implies that no more discrepancy remains at time $Nt$. \\ \\
{}From now on, we will concentrate on the proof of \eqref{coupling_2_mean}, 
\eqref{coupling_2_tilde} and \eqref{coupling_2}, the proofs of 
\eqref{coupling_1_mean}, \eqref{coupling_1_tilde} and \eqref{coupling_1} being similar. 
In order to prove  \eqref{coupling_2_mean}, for $s\in[t_0,t]$, we set
(recall definition \eqref{def_betagamma} of $\beta$ and $\gamma$ particles) 
\be\label{config_disc}
\widetilde{\beta}^N_{Ns}(y):=\left[{\eta}^N_{Ns}(y)-\widetilde{\xi}^{\rho_0}_{Ns}(y)\right]^+,\,
\widetilde{\gamma}^N_{Ns}(y):=\left[\widetilde{\xi}^{\rho_0}_{Ns}(y)
-{\eta}^N_{Ns}(y)\right]^+,\,y\in\Z 
\ee
and
\begin{eqnarray}
\widetilde{e}_N(s) & := & 
(2\Delta N)^{-1}\sum_{x\in\Z}\widetilde{\gamma}^N_{Ns}(x)
\label{def_etilde}
\end{eqnarray}
Next proposition, which is the core of our argument, 
studies the time evolution of  
$\widetilde{e}_N$. 
For its statement, we introduce the following quantities.
 Choose $\varepsilon>0$ such that 
\be\label{choose_m} 2\varepsilon<\min(\rho_3-\rho_2,\rho_1-\rho_0)\ee
and $f$ is uniformly concave or convex on 
$[\rho_0,\rho_0+2\varepsilon]$  or $[\rho_3 -2 \varepsilon, \rho_3]$. 
Note that the latter requirement can be satisfied by Lemma \ref{lemma_properties_flux}, because we assumed  $\rho_3, \rho_0\in\mathcal R$. 
Let $\delta>0$ such that $2\Delta$ is a multiple of $\delta$. 
As detailed in Subsection \ref{subsec:outline} below, the microscopic 
site interval $\Z\cap[N(u-\Delta),N(u+\Delta)]$ will be divided into subintervals 
of equal length  $N\delta$ 
to obtain the following estimate.
\begin{proposition}\label{prop_evol_disc_finite}
There exist constants $A>0$ and $B>0$ independent of 
 $\varepsilon$  and  $\Delta$ such that, 
for every $s\in[t_0,t)$, with probability tending to $1$ as $N\to+\infty$,
\be\label{limit_disc}
\widetilde{e}_N(s)-\widetilde{e}_N\left(s+A\frac{\delta}{\varepsilon}\right)\geq B\varepsilon 
\widetilde{e}_N(s)
\min(\widetilde{e}_N(s),\varepsilon)
\ee
\end{proposition}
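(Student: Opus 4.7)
The strategy exploits the strict convexity or concavity of $f$ on $[\rho_0,\rho_0+2\varepsilon]$, guaranteed by $\rho_0\in\mathcal R$ via Lemma~\ref{lemma_properties_flux}, to produce an excess current of $\eta^N$-particles over $\widetilde{\xi}^{\rho_0}$-particles along characteristic-speed paths. By the coupling mechanism of Section~\ref{sec:class}, this excess must be absorbed by coalescences of $(\widetilde{\xi}^{\rho_0}-\eta^N)^+$-discrepancies, forcing $\widetilde{e}_N$ to decrease.

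\medskip

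I would first partition $\Z\cap[N(u-\Delta),N(u+\Delta)]$ into $2\Delta/\delta$ subintervals $J_k$ of length $N\delta$, with local densities $e_N^k(s):=(N\delta)^{-1}\sum_{y\in J_k}\widetilde{\gamma}^N_{Ns}(y)$ so that $\widetilde{e}_N(s)=(2\Delta)^{-1}\delta\sum_k e_N^k(s)$. The constant $A$ is chosen small enough that $(|f'(\rho_0)|+V)A\delta/\varepsilon<\delta/4$, where $V$ is the finite-propagation speed of Lemma~\ref{lemma_finite_prop}; this isolates the evolution inside each $J_k$ on the window $[Ns,Ns+NA\delta/\varepsilon]$ from the data far outside, up to an event of vanishing probability. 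Using Theorem~\ref{th_hydro} and \eqref{def:rho1}, the empirical density of $\eta^N$ on a mildly enlarged $J_k^\ast$ is at least $\rho_1-o(1)$ with high probability, while $\widetilde{\xi}^{\rho_0}$ has density $\rho_0+o(1)$ by stationarity of the untruncated process combined with Lemma~\ref{lemma_finite_prop} for the truncation.

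\medskip

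For each $k$ I would then follow two parallel paths $x_r^{k,\pm}$ bounding $J_k$ and travelling at the characteristic speed $V_0:=f'(\rho_0)$ of the stationary density $\rho_0$. By Lemma~\ref{current_critical}, the integrated $\widetilde{\xi}^{\rho_0}$-current across each $x_r^{k,\pm}$ is asymptotic to $[f(\rho_0)-V_0\rho_0]\cdot NA\delta/\varepsilon$, while the corresponding $\eta^N$-current is at least $[f(\rho_1)-V_0\rho_1-o(1)]\cdot NA\delta/\varepsilon$ by the hydrodynamic limit applied locally. The per-subinterval excess is thus $\Phi(\rho_1,\rho_0)\cdot NA\delta/\varepsilon$, where $\Phi(\rho',\rho):=f(\rho')-f(\rho)-f'(\rho)(\rho'-\rho)$, and uniform convexity/concavity of $f$ on $[\rho_0,\rho_0+2\varepsilon]$ yields
\[
|\Phi(\rho_1,\rho_0)|\geq\kappa(\rho_1-\rho_0)^2\geq 4\kappa\varepsilon^2,
\]
with $\kappa>0$ depending only on $f$. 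Using the identity $\sum_{y\in J_k}(\widetilde{\gamma}^N(y)-\widetilde{\beta}^N(y))=\sum_{y\in J_k}(\widetilde{\xi}^{\rho_0}(y)-\eta^N(y))$ together with Lemma~\ref{lemma_current} and the non-negativity of $\widetilde{\gamma}^N$ and $\widetilde{\beta}^N$, this excess current must be matched by coalescences inside $J_k$, each destroying one $\widetilde{\gamma}^N$-particle, and bounded by the available supply $e_N^k(s)\cdot N\delta$. Hence the drop of the $\widetilde{\gamma}^N$-count on $J_k$ is at least $\min(4\kappa A\varepsilon,e_N^k(s))\cdot N\delta$; summing over $k$, dividing by $2\Delta$, and distinguishing the regimes $\widetilde{e}_N(s)\geq 4\kappa A\varepsilon$ and $\widetilde{e}_N(s)<4\kappa A\varepsilon$ delivers the bound $B\varepsilon\widetilde{e}_N(s)\min(\widetilde{e}_N(s),\varepsilon)$.

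\medskip

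The main obstacle lies in quantifying the hydrodynamic and current asymptotics at the mesoscopic scale $N\delta$ (rather than the macroscopic scale $N$ of Theorem~\ref{th_hydro}) uniformly over the $2\Delta/\delta$ subintervals, with fluctuation errors small compared to the convexity gain $\kappa\varepsilon^2$. This will require additional concentration arguments combining finite propagation (Lemma~\ref{lemma_finite_prop}), local stationarity of $\widetilde{\xi}^{\rho_0}$, and the ergodic Assumption~\ref{assumption_ergo} to ensure that the empirical disorder on each $J_k$ is close enough to $Q_0(\alpha)$ that the effective flux is well-approximated by $f$ rather than by a subinterval-dependent quantity.
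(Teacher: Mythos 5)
Your plan identifies the right ingredients (subintervals of length $N\delta$, uniform convexity/concavity of $f$ near $\rho_0$, a time window of length $A\delta/\varepsilon$, and a count capped by the local supply of $\widetilde{\gamma}$ particles), but the central step is a genuine gap: the claim that the excess of the $\eta^N$-current over the $\widetilde{\xi}^{\rho_0}$-current across characteristic-speed paths ``must be matched by coalescences inside $J_k$'' does not follow. The current difference across the two boundary paths only controls the evolution of $\sum_{y\in J_k}\bigl[\widetilde{\beta}^N(y)-\widetilde{\gamma}^N(y)\bigr]$, and an excess inflow can be absorbed entirely by accumulation of $\widetilde{\beta}$ particles in $J_k$ without a single $\widetilde{\beta}$ particle ever crossing a $\widetilde{\gamma}$ particle; since coalescences occur only at such crossings, no lower bound on the decrease of $\widetilde{e}_N$ results. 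Two secondary problems compound this: (a) your bound $|\Phi(\rho_1,\rho_0)|\geq\kappa(\rho_1-\rho_0)^2$ uses uniform convexity on $[\rho_0,\rho_1]$, whereas Lemma \ref{lemma_properties_flux} and the choice \eqref{choose_m} only give it on $[\rho_0,\rho_0+2\varepsilon]$, and $\rho_1$ may lie well outside that window; (b) choosing $A$ small enough that $(|f'(\rho_0)|+V)A\delta/\varepsilon<\delta/4$ to isolate every subinterval is incompatible with forcing any relative displacement of order $\delta$ within the window, which is exactly what a crossing argument needs. Also, asserting that the $\eta^N$-current across a moving path equals $f(\rho_1)-V_0\rho_1$ up to $o(1)$ implicitly presumes local equilibrium at density $\rho_1$, which is part of what is being proved.

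The paper's proof supplies the missing mechanism. It splits the excess $\eta$ mass into classes at densities $\rho^i=\rho_0+i\varepsilon/2$, $i=1,2,3$ (Lemma \ref{lemma_classes}), all inside the convexity window, and compares each class to the stationary process at density $\rho^i$ (Lemma \ref{lemma_current_eta}, via Corollary \ref{corollary_current} and Lemma \ref{current_critical}) to show that class-$2$ and class-$3$ particles travel at effective speeds $v'_2>v'_3$ differing by $C'\varepsilon$ (Lemmas \ref{lemma_mut1}--\ref{lemma_mut2}, using \eqref{speeds}); over a time $C_2\delta/\varepsilon$ with $C_2$ \emph{large} (not small), class-$2$ particles from the left third overtake class-$3$ particles from the right third, so every surviving $\widetilde{\gamma}$ particle in the middle third is crossed by one whole group, and each crossing forces a coalescence (Corollary \ref{cor_mut1}, rules (I)--(II) of Subsection \ref{sec:class}). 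Non-interaction of the selected intervals is then obtained not by shrinking $A$ but by keeping only every $n^*$-th interval with $n^*\sim V C_2/\varepsilon$ (Lemma \ref{lemma_intervals} and \eqref{no_interaction}), which is where one factor of $\varepsilon$ in the bound $B\varepsilon\,\widetilde{e}_N(s)\min(\widetilde{e}_N(s),\varepsilon)$ comes from; the minimum in \eqref{coalescence_k} of the three particle counts produces the $\min(\widetilde{e}_N(s),\varepsilon)$ factor. You would need to replace your current-balance step by some argument of this type (or another mechanism that converts the convexity gap into guaranteed $\widetilde{\beta}$--$\widetilde{\gamma}$ crossings) for the proof to go through.
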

The proof of Proposition \ref{prop_evol_disc_finite} will be carried out in Subsection \ref{subsec:outline}.
The proof of Proposition \ref{prop_mean_disc} from Proposition \ref{prop_evol_disc_finite} follows.
\\
\begin{proof}{Proposition}{prop_mean_disc}
 Let $t_0<\tau<s$  and $n\in\N$. We choose $\varepsilon<\iota$, $\delta>0$ 
such that  $A\delta/\varepsilon=(s-\tau)/n$,  and divide the time interval  $[\tau,s]$ 
into $n$ intervals $[t_k,t_{k+1}]$ of length $A\delta/\varepsilon$, where $k=0,\ldots,n-1$.
Let $E_{N,n}$ denote the event that \eqref{limit_disc} holds for every 
$k=0,\ldots,n-1$, and $F_N$ the event that $\widetilde{e}_N(s)>\varepsilon$.
Since $\widetilde{e}_N$ is nonincreasing (recall from Subsection  \ref{sec:class} 
that no discrepancy is created), on $E_{N,n}$ we have $\widetilde{e}_N(t_k)>\varepsilon$ 
for every $k=0,\ldots,n-1$. Thus, on $E_{N,n}\cap F_N$,
\[
\varepsilon<\widetilde{e}_N(s)<\widetilde{e}_N(\tau)-B\varepsilon\sum_{k=0}^{n-1}\widetilde{e}_N(t_k)
\min\left(\varepsilon,\widetilde{e}_N(t_k)\right)<
\widetilde{e}_N(\tau)-nB\varepsilon^3
\]
Since
\[
\widetilde{e}_N(\tau)\leq(2\Delta N)^{-1}\sum_{x\in\Z}{\eta}^N_{N\tau}(x)+
(2\Delta N)^{-1}\sum_{x\in\Z}\widetilde{\xi}^{\rho_0}_{N\tau}(x)
\]
and each term on the above r.h.s. has a limit in law (for the first one this follows from Theorem 
\ref{th_hydro}, and for the second one from the law of large numbers), we have 
\[
\lim_{n\to+\infty}\limsup_{N\to+\infty}\Prob(E_{N,n}\cap F_N)\leq
\lim_{n\to+\infty}\limsup_{N\to+\infty}\Prob\left(
\varepsilon<\widetilde{e}_N(\tau)-nB\varepsilon^3
\right)=0
\]
On the other hand, by Proposition \ref{prop_evol_disc_finite}, 
$\lim_{N\to+\infty}\Prob(E_{N,n})=1$ for every $n\in\N$. Since
\[
\Prob(F_N)\leq \Prob(E_{N,n}\cap F_N)+\Prob(E_{N,n}^c),
\]
letting $N\to+\infty$ and then $n\to+\infty$ yields $\lim_{N\to+\infty}\Prob(F_N)=0$, 
which implies the desired result.
\end{proof}
\subsection{Proof of Proposition \ref{prop_evol_disc_finite}}\label{subsec:outline}
Recall from \eqref{config_disc} that $({\eta}-\widetilde{\xi}^{\rho_0})$ 
discrepancies are $\widetilde{\beta}$ particles, and 
$(\widetilde{\xi}^{\rho_0}-{\eta})$ discrepancies are $\widetilde{\gamma}$ particles. 
Let, for $s\in[t_0,t]$,
\be\label{def_etilden}
\widetilde{\mathfrak{e}}_N(s):=
(2\Delta N)^{-1}\sum_{z\in\Z\cap[N(u-\Delta),N(u+\Delta)]}\widetilde{\gamma}^N_{Ns}(z)
\ee
By finite propagation property (Lemma \ref{lemma_finite_prop}), we have
\be\label{mathfrak}
\lim_{N\to+\infty}\Prob\left(\left\{\widetilde{\mathfrak{e}}_N(s)=\widetilde{e}_N(s),\,
\forall s\in[t_0,t]\right\}\right)=1
\ee
We  saw  in Subsection \ref{sec:class} that whenever a $\widetilde{\beta}$ 
particle and a $\widetilde{\gamma}$ particle  coalesce, both types of 
discrepancies are killed, and the sum in \eqref{def_etilden}  decreases by one unit.
Here is the general idea of the proof.
By \eqref{rho0_rho3}, the density profile of the ${\eta}$ 
particles dominates that of the $\widetilde{\xi}^{\rho_0}$ 
particles over the space interval $[N(u-\Delta),N(u+\Delta)]$. Using this, 
everywhere along this interval, one can find a fair amount of 
$\widetilde{\beta}$ particles. On the other hand, assuming  $\widetilde{\mathfrak{e}}_N(s)$ 
not too small allows us to find 
many intervals with a good amount of $\widetilde{\gamma}$ particles. 
We will show that for suitably chosen $A$, around each such interval, 
a reasonable number of coalescences will occur in the time interval 
$[Ns,N(s+A\delta/\varepsilon)]$ between $\widetilde{\beta}$ and 
$\widetilde{\gamma}$ particles.
Each coalescence makes the sum in \eqref{def_etilden} decrease by one unit. 
This leads to the estimate \eqref{limit_disc}. Details of this scheme 
are now presented in four steps involving lemmas proved in the next subsection.\\ \\
{\em Step one: finding $\widetilde{\gamma}$ particles.} 
We divide $[N(u-\Delta),N(u+\Delta)]$ into subintervals $NI_k$ of length $N\delta$ by setting
\be\label{def:xk}
I_k=[x_k,x_{k+1}],\quad x_k=u-\Delta+k\delta,\quad k=0,\ldots,\frac{2\Delta}{\delta}-1=:K-1
\ee
The following lemma enables us to find a good number of intervals $NI_k$ 
separated by a sufficient distance (we shall see below why this is important), 
inside each of which the average number of $\widetilde{\gamma}$ particles
\be\label{def_etildenlk}
\widetilde{\mathfrak{e}}_{N,k}(s):=(\delta N)^{-1}\sum_{z\in\Z\cap N I_k}\widetilde{\gamma}^N_{Ns}(z)
\ee
 is at least half the global average in \eqref{def_etilden}.
\begin{lemma}
\label{lemma_intervals}
There exists a constant $C_1>0$ such that the following holds: 
given $n\in\N$, there exists $l^*\in\{0,\ldots,n-1\}$ such that, 
with probability tending to $1$ as $N\to+\infty$,
\be\label{intervals_discrepancies}
\left|
K_{l^*}
\right|
\geq \frac{
\Delta \widetilde{\mathfrak{e}}_N(s)}{n\delta C_1}
\ee
where, for $l\in\{0,\ldots,n-1\}$,
\be\label{def_goodset}
K_l:=\left\{
k\in\left\{ 0,\ldots,\left\lfloor\frac{K-1-l}{n}\right\rfloor\right\}:\,\widetilde{\mathfrak{e}}_{N,kn+l}(s)\geq\frac{\widetilde{\mathfrak{e}}_N(s)}{2}
\right\}
\ee
\end{lemma}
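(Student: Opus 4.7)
The plan is to combine a reverse Markov inequality with a pigeonhole argument over residue classes modulo $n$. The core observation is that once one has a high-probability a priori upper bound on the local averages $\widetilde{\mathfrak{e}}_{N,k}(s)$, the remainder of the argument is deterministic and elementary.

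First I would establish a uniform upper bound on the local densities. By construction of the coupling, $\widetilde{\gamma}^N_{Ns}(z) \leq \widetilde{\xi}^{\rho_0}_{Ns}(z) \leq \xi^{\rho_0}_{Ns}(z)$, hence
\[
\widetilde{\mathfrak{e}}_{N,k}(s) \leq \frac{1}{\delta N} \sum_{z \in N I_k \cap \Z} \xi^{\rho_0}_{Ns}(z).
\]
The process $(\xi^{\rho_0}_t)_{t \geq 0}$ is stationary with marginal $\mu^{\alpha,\rho_0}$, whose asymptotic mean density is $\rho_0$ by Lemma \ref{lemma_inter}. Since $K = 2\Delta/\delta$ is independent of $N$, a union bound over the $K$ macroscopic windows $I_k$ yields a deterministic constant $C_1 > 0$, depending only on $\rho_0$, such that
\[
\max_{0 \leq k \leq K-1} \widetilde{\mathfrak{e}}_{N,k}(s) \leq C_1
\]
with probability tending to $1$ as $N \to +\infty$.

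On this high-probability event a reverse Markov argument applies. By \eqref{def_etilden} and \eqref{def_etildenlk} one has $\widetilde{\mathfrak{e}}_N(s) = K^{-1} \sum_{k=0}^{K-1} \widetilde{\mathfrak{e}}_{N,k}(s)$. Setting
\[
G := \left\{ k \in \{0, \ldots, K-1\} : \widetilde{\mathfrak{e}}_{N,k}(s) \geq \widetilde{\mathfrak{e}}_N(s)/2 \right\},
\]
splitting the above sum according to membership in $G$ and invoking the uniform bound gives
\[
K \widetilde{\mathfrak{e}}_N(s) \leq C_1 |G| + \frac{\widetilde{\mathfrak{e}}_N(s)}{2}(K - |G|),
\]
so that $|G| \geq K \widetilde{\mathfrak{e}}_N(s)/(2 C_1) = \Delta \widetilde{\mathfrak{e}}_N(s)/(\delta C_1)$.

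Finally, the sets $K_l$ defined in \eqref{def_goodset} are exactly the traces of $G$ on the $n$ arithmetic progressions $\{l, l+n, l+2n, \ldots\} \cap \{0, \ldots, K-1\}$ for $l = 0, \ldots, n-1$, which partition $\{0, \ldots, K-1\}$. Hence $\sum_{l=0}^{n-1} |K_l| = |G|$, and by pigeonhole there exists $l^* \in \{0, \ldots, n-1\}$ (possibly depending on the realization, and after passing to a subsequence in $N$ made constant) with
\[
|K_{l^*}| \geq \frac{|G|}{n} \geq \frac{\Delta \widetilde{\mathfrak{e}}_N(s)}{n \delta C_1},
\]
which is \eqref{intervals_discrepancies}. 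I do not anticipate any real obstacle: the only nontrivial input is the uniform density bound of the first step, which reduces to a law of large numbers for a stationary product-type reference process applied to a fixed finite collection of macroscopic windows. The spacing of the selected intervals by $n$ (rather than $1$) is built directly into the definition of $K_l$, which is precisely why residue classes modulo $n$ are used in the pigeonhole step; this spacing is what will later permit independent coalescence arguments around each good window.
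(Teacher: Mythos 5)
Your proof is correct and follows essentially the same route as the paper: a uniform bound $C_1$ on local window averages via the domination $\widetilde{\gamma}\leq\widetilde{\xi}^{\rho_0}\leq\xi^{\rho_0}$ and the law of large numbers for the stationary measure, a good/bad interval split against the global average, and a pigeonhole over the $n$ residue classes. The only (immaterial) difference is the order of operations: the paper first selects the residue class $l^*$ carrying at least its fair share of the total mass and then splits that class into good and bad intervals, whereas you first extract the global good set $G$ by the reverse Markov argument and then pigeonhole its cardinality over residue classes; both give the same constant in \eqref{intervals_discrepancies}.
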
  
The subintervals 
\begin{equation}\label{def:Jkl}
J_{k}^{l^*}:=I_{nk+l^*}, \quad\hbox{ for }\quad 
k=0,\ldots,\left\lfloor\frac{K-l^*-1}{n}\right\rfloor
\end{equation}
are separated from one another by a distance at least $(n-1)\delta$. 
We will eventually choose $n$  (see \eqref{no_interaction} below)  so that by finite propagation property 
(Lemma \ref{lemma_finite_prop}) subintervals do not interact over a small time interval, and
thus the overall number of coalescences in $[N(u-\Delta),N(u+\Delta)]$ 
is at least the sum of the number of coalescences around each subinterval, 
because no coalescence is ever counted twice.  To estimate this number, we proceed as follows.\\ \\
{\em Step two: dividing $\widetilde{\beta}$ particles into classes.} 
By \eqref{def:rho1}, \eqref{rho0_rho3} and \eqref{choose_m},
everywhere on the macroscopic interval $[u-\Delta,u+\Delta]$, 
the difference between the local densities of ${\eta}$ and $\widetilde{\xi}$ particles
is bounded below (with high probability) by $3\varepsilon/2$. 
Our next lemma shows that this difference  enables us   
to construct intermediate configurations ${\eta}^1$, 
${\eta}^2$, ${\eta}^3$ 
with (almost)  homogeneous profiles of densities 
$\rho^1, \rho^2, \rho^3$ such that  
$
{\eta}^1\leq{\eta}^2\leq{\eta}^3
$ 
and,  for $1\le i\le 3$,  
\begin{equation}\label{def:rho^i_1}
\rho^i=\rho_0+i\frac{\varepsilon}{2}
\end{equation}
\begin{lemma}\label{lemma_classes}
Let $h>0$,
and $\mathcal I$ be a partition of  $[u-\Delta,u+\Delta]$ 
into subintervals of length  $h$. Then, on an event of probability 
tending to $1$ as $N\to+\infty$, there exist configurations 
${\eta}^{i,N}_{Ns}$ for $i=1,\ldots,4$ with the following properties:
(i) ${\eta}^{4,N}_{Ns}={\eta}^N_{Ns}$; 
(ii) ${\eta}^{i,N}_{Ns}\leq{\eta}^{i+1,N}_{Ns}$ 
for $i\in\{1,2,3\}$; (iii) defining for $i\in\{2,3,4\}$ the configuration
of ${\eta}$ particles of class $i$ by
\[
\widetilde{\beta}^{i,N}_{Ns}:={\eta}^{i,N}_{Ns}-{\eta}^{i-1,N}_{Ns}
\]
 we have
\[
\sum_{i=2}^4\widetilde{\beta}^{i,N}_{Ns}\leq\widetilde{\beta}^N_{Ns}
\]
(iv) For every interval $I\in\mathcal I$, and for $\rho^i$ defined in \eqref{def:rho^i_1},
\be\label{density_classes}
\displaystyle \lim_{N\to+\infty}(Nh)^{-1}\sum_{x\in NI}{\eta}^{i,N}_{Ns}(x)=\rho^i
\ee
\end{lemma}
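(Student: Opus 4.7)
The plan is to construct $\eta^{1,N}_{Ns},\eta^{2,N}_{Ns},\eta^{3,N}_{Ns}$ by progressively removing $\widetilde\beta^N_{Ns}$ particles from $\eta^N_{Ns}=:\eta^{4,N}_{Ns}$, treating each interval $NI$ of the partition $\mathcal I$ independently. The removals will be \emph{cumulative}: the particles removed to form $\eta^{i,N}_{Ns}$ will include those removed to form $\eta^{j,N}_{Ns}$ for every $j>i$. Then properties (i) and (ii) are immediate, and (iii) holds automatically because all removed particles belong to $\widetilde\beta^N_{Ns}$, so $\eta^{4,N}_{Ns}-\eta^{1,N}_{Ns}\leq\widetilde\beta^N_{Ns}$. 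The whole argument thus reduces to showing that on each $NI$ enough $\widetilde\beta^N$ particles are available to realize the prescribed densities $\rho^i$ in (iv).

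The main quantitative input is a uniform-in-$I$ lower bound on the $\widetilde\beta^N$ mass in $NI$. By Theorem \ref{th_hydro} applied to continuous approximations of $\mathbf{1}_I$, for each $I\in\mathcal I$
\[
a^N_I := (Nh)^{-1}\sum_{x\in NI}\eta^N_{Ns}(x) \;\longrightarrow\; \bar\rho_I := h^{-1}\int_I\rho(s,y)\,dy \geq \rho_1
\]
in probability, the last inequality following from the definition \eqref{def:rho1} of $\rho_1$. On the other hand, by \eqref{def_xitilde}, $\widetilde\xi^{\rho_0}_{Nt_0}\leq\xi^{\rho_0}_{Nt_0}$ pointwise, so the monotonicity of the Harris coupling gives $\widetilde\xi^{\rho_0}_{Ns}\leq\xi^{\rho_0}_{Ns}$ at every $s\geq t_0$; combined with stationarity of $\xi^{\rho_0}$ under $\mu^{\alpha,\rho_0}$ and Lemma \ref{lemma_inter} (extended from anchored sums to arbitrary intervals of order $N$ by subtraction),
\[
(Nh)^{-1}\sum_{x\in NI}\widetilde\xi^{\rho_0}_{Ns}(x) \leq (Nh)^{-1}\sum_{x\in NI}\xi^{\rho_0}_{Ns}(x) \;\longrightarrow\; \rho_0.
\]
Since $\widetilde\beta^N_{Ns}\geq\eta^N_{Ns}-\widetilde\xi^{\rho_0}_{Ns}$ pointwise, the density of $\widetilde\beta^N_{Ns}$ on $NI$ therefore exceeds $\bar\rho_I-\rho_0\geq\rho_1-\rho_0>2\varepsilon$ by \eqref{choose_m}, with probability tending to one.

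To conclude, fix on each $NI$ an arbitrary enumeration of the $\widetilde\beta^N_{Ns}$ particles and set $n^i_I:=\lfloor Nh\,a^N_I-Nh\,\rho^i\rfloor$ for $i\in\{1,2,3\}$. Define $\eta^{i,N}_{Ns}$ by removing from $\eta^N_{Ns}$ the first $n^i_I$ particles of $\widetilde\beta^N_{Ns}$ on each $NI$, and set $\eta^{4,N}_{Ns}:=\eta^N_{Ns}$. For $N$ large one has $n^1_I\geq n^2_I\geq n^3_I\geq 0$ since $\rho^1<\rho^2<\rho^3<\bar\rho_I$, and $n^1_I\leq Nh(\bar\rho_I-\rho_0-\varepsilon/2)+o(N)$ sits below the available $\widetilde\beta^N$ count with a slack of order $\varepsilon Nh/2$; this slack, guaranteed jointly by \eqref{def:rho^i_1} and \eqref{choose_m}, is precisely what makes the removal feasible even though $\widetilde\xi^{\rho_0}$ is not directly controlled in the outer annulus of $[u-\Delta,u+\Delta]$ (where the only information is the one-sided bound $\widetilde\xi^{\rho_0}\leq\xi^{\rho_0}$). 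Properties (i)--(iii) then hold by construction, and (iv) follows from $(Nh)^{-1}\sum_{x\in NI}\eta^{i,N}_{Ns}(x)=a^N_I-n^i_I/(Nh)\to\rho^i$; a union bound over the finitely many $I\in\mathcal I$ concludes. The main point of care is thus the uniform feasibility of the removals, which the quantitative margin $\varepsilon/2$ between $\rho_0$ and $\rho^1$ is designed to absorb.
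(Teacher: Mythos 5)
Your proposal is correct and follows essentially the same route as the paper: you bound the per-box mass of $\eta^N_{Ns}$ from below via Theorem \ref{th_hydro}, bound the per-box mass of $\widetilde{\xi}^{\rho_0}_{Ns}$ from above via $\widetilde{\xi}^{\rho_0}\leq\xi^{\rho_0}$ and the law of large numbers for $\mu^{\alpha,\rho_0}$, and then use the surplus of $\widetilde{\beta}$ discrepancies (via $\widetilde{\beta}^N\geq\eta^N-\widetilde{\xi}^{\rho_0}$) to peel off classes so that the intermediate configurations have box densities $\rho^1<\rho^2<\rho^3$, exactly as in the paper's construction with the events $E_N$ and $F_N$. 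The only difference is bookkeeping: you tie the removal counts to the realized box averages $a^N_I$ instead of the paper's fixed thresholds $\lfloor Nh(\rho_0+3\varepsilon/2)\rfloor$ and $\lfloor Nh\varepsilon/2\rfloor$, which changes nothing of substance.
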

{\em Step three: class ``velocities''.} 
 The following
lemma states that, modulo explicit error bounds,
second, resp. third class ${\eta}$ particles, move at asymptotic speeds $v_2$, resp. $v_3$, defined by
\be\label{def:v_i}
v_i:=[f(\rho^i)-f(\rho^{i-1})]/(\rho^i-\rho^{i-1})
\ee 
\begin{lemma}\label{lemma_mut1}
 Let $\tau_0>0$, $\tau>0$ and $u_0\in\R$.
Set
\begin{eqnarray}
v'_2 & := & v_2-\frac{8h}{\tau\varepsilon}(\rho_0+2\varepsilon)\label{def_vprime2}\\
v'_3 & := & v_3+ \frac{8h}{\tau\varepsilon}(\rho_0+2\varepsilon)\label{def_vprime3}
\end{eqnarray}
Then, with probability tending to $1$ as $N\to+\infty$, the following events hold:
(i) all the second class ${\eta}$ particles which at time $N\tau_0$ were in $[Nu_0,+\infty)$ 
are in $[N(u_0+v'_2\tau),+\infty)$ at time $N(\tau_0+\tau)$, and (ii) all the third class ${\eta}$ particles which at time $N\tau_0$ were in $(-\infty,Nu_0]$ 
are in $(-\infty,N(u_0+v'_3\tau)]$ at time $N(\tau_0+\tau)$.
\end{lemma}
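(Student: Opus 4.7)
The plan is to combine a current-counting argument with the order-preserving labeling of class-$i$ particles introduced in Subsection~\ref{sec:class}. I focus on part~(i); part~(ii) is entirely symmetric, with left and right interchanged and $v'_3\geq v_3$ bounding the third-class speed from above.

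By the labeling rule (rightmost jumps to the right, leftmost jumps to the left), the relative order of all second class particles is preserved under the coupled dynamics. Hence, to establish~(i), it suffices to show that the number of second class particles in $[\lfloor N(u_0+v'_2\tau)\rfloor,+\infty)$ at time $N(\tau_0+\tau)$ is at least the number of second class particles in $[\lfloor Nu_0\rfloor,+\infty)$ at time $N\tau_0$. Consider the straight-line path $x_s:=\lfloor Nu_0+v'_2(s-N\tau_0)\rfloor$ for $s\in[N\tau_0, N(\tau_0+\tau)]$, and set $\Gamma^{i}:=\Gamma^\alpha_{x_\cdot}(N\tau_0, N(\tau_0+\tau), \eta^{i,N}_{N\tau_0})$. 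Applying~\eqref{current} (in finite-difference form via~\eqref{difference_currents}, to avoid infinite sums) separately to $\eta^{1,N}$ and $\eta^{2,N}$ and subtracting, and using that $\widetilde{\beta}^{2,N}=\eta^{2,N}-\eta^{1,N}$, the count difference of interest equals $\Gamma^2-\Gamma^1$.

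By Lemma~\ref{lemma_classes}(iv), each $\eta^{i,N}_{N\tau_0}$ has $h$-averaged density $\rho^i$ throughout $[u-\Delta, u+\Delta]$. Since constant profiles are stationary entropy solutions of~\eqref{conservation_law}, Theorem~\ref{th_hydro} on the time window $[N\tau_0, N(\tau_0+\tau)]$ ensures this profile is preserved on the relevant macroscopic region. Coupling $\eta^{i,N}$ with the stationary process $\xi^{\alpha,\rho^i}$ via Corollary~\ref{corollary_current}, applying Lemma~\ref{current_critical} to the fixed endpoint of the path, and correcting for the displacement of the path by $Nv'_2\tau$ (which sweeps approximately $N\tau v'_2\rho^i$ particles), one obtains
\begin{displaymath}
\Gamma^i = N\tau\bigl(f(\rho^i)-v'_2\rho^i\bigr)+o(N).
\end{displaymath}
Subtracting the two values gives $\Gamma^2-\Gamma^1=N\tau(\rho^2-\rho^1)(v_2-v'_2)+o(N)\geq 0$, since $v'_2\leq v_2$ by~\eqref{def_vprime2}.

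The main technical obstacle is the quantitative control of the error: Lemma~\ref{lemma_classes} only provides an $h$-scale approximate profile rather than an exactly uniform one, so ``$o(N)$'' must be replaced by a sharp effective bound. The correction $8h(\rho_0+2\varepsilon)/(\tau\varepsilon)$ appearing in~\eqref{def_vprime2}--\eqref{def_vprime3} is tuned precisely to absorb this error: via Lemma~\ref{lemma_current} and Corollary~\ref{corollary_current}, it originates in $L^\infty$ bounds of order $h(\rho_0+2\varepsilon)$ on differences $F_{x_0}(x,\eta^{i,N}_{N\tau_0})-F_{x_0}(x,\zeta)$ against an exactly-uniform comparison configuration $\zeta$ of density $\rho^i$; the factor $1/\tau$ emerges when converting such a fixed microscopic particle-number discrepancy into an effective speed correction over the time window of macroscopic length $\tau$.
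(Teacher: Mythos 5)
Your proposal is correct and follows essentially the same route as the paper: the paper packages your current estimate as Lemma \ref{lemma_current_eta} (proved, as you propose, via Corollary \ref{corollary_current}, Lemma \ref{current_critical} and the $h$-block densities of Lemma \ref{lemma_classes}), and then the choice \eqref{def_vprime2}--\eqref{def_vprime3} makes the net flux of second (resp.\ third) class particles across the path of speed $v'_2$ (resp.\ $v'_3$) nonnegative (resp.\ nonpositive), which together with the order preservation within a class yields the claim. The only imprecision is the displayed ``$o(N)$'', which is really an error of order $2Nh(\rho_0+2\varepsilon)$ per class current, exactly what the margin $\tau(v_2-v'_2)(\rho^2-\rho^1)=4h(\rho_0+2\varepsilon)$ is tuned to absorb --- as you yourself note in your last paragraph --- and the appeal to Theorem \ref{th_hydro} over the time window is unnecessary, since Corollary \ref{corollary_current} only uses the time-$N\tau_0$ configurations.
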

 Recall that  the flux function $f$ is $C^2$ and (see 
 (ii) of Lemma \ref{lemma_properties_flux})
uniformly concave or convex on $[\rho_0,\rho_0+2\varepsilon]$.
Without loss of generality, we will assume in the sequel that $f$ is uniformly {\em concave}
on $[\rho_0,\rho_0+2\varepsilon]$. All subsequent developments can be translated in a natural way 
to the case of uniform convexity. Thus, 
there exists a constant $C'=C'(f,\rho_0)$ 
such that
for $\varepsilon$ small enough (that is for $m$ large enough), 
\be\label{speeds}
v_3<v_2-C'\varepsilon
\ee 
 Given three adjacent intervals of equal length chosen below, we will need to show that after a suitable time, any $\widetilde{\gamma}$ particle in the central interval has been crossed either by all the second class ${\eta}$
particles in the leftmost interval, or by all the third class ${\eta}$ in the rightmost interval. This is possible because by \eqref{speeds}, we can make $v'_3$ sufficiently smaller than $v'_2$ for small $h$. Precisely:
\begin{lemma}\label{lemma_mut2}
Let $\tau_0>0$, $\tau>0$, $L>0$, $u_0\in\R$. Assume that
\be\label{cond_ctilde_h}\frac{\varepsilon\tau}{L}C'>6,\quad
h\leq \frac{C'\tau\varepsilon^2}{32(\rho_0+2\varepsilon)}
\ee
where $C'$ is the constant in \eqref{speeds}. Then $v'_2$ and $v'_3$ defined by \eqref{def_vprime2}--\eqref{def_vprime3}  satisfy
\be\label{vprimes_satisfy}
u_0+v'_2\tau>u_0+3L+v'_3\tau
\ee
\end{lemma}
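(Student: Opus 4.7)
The plan is to reduce the inequality \eqref{vprimes_satisfy} to the difference $(v'_2-v'_3)\tau>3L$, then bound this difference from below using both the lower bound \eqref{speeds} on $v_2-v_3$ and the two hypotheses in \eqref{cond_ctilde_h}.

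First I would subtract $u_0$ from both sides of \eqref{vprimes_satisfy}, observe that it is equivalent to $(v'_2-v'_3)\tau>3L$, and compute from \eqref{def_vprime2}--\eqref{def_vprime3} that
\[
v'_2-v'_3=(v_2-v_3)-\frac{16h}{\tau\varepsilon}(\rho_0+2\varepsilon).
\]
By the uniform concavity of $f$ on $[\rho_0,\rho_0+2\varepsilon]$ already invoked in \eqref{speeds}, we have $v_2-v_3>C'\varepsilon$. The second condition in \eqref{cond_ctilde_h} rearranges to
\[
\frac{16h}{\tau\varepsilon}(\rho_0+2\varepsilon)\le\frac{C'\varepsilon}{2},
\]
so combining the two estimates gives $v'_2-v'_3>C'\varepsilon-C'\varepsilon/2=C'\varepsilon/2$.

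Multiplying by $\tau$ and applying the first condition in \eqref{cond_ctilde_h}, namely $C'\varepsilon\tau>6L$, yields
\[
(v'_2-v'_3)\tau>\frac{C'\varepsilon\tau}{2}>3L,
\]
which is exactly \eqref{vprimes_satisfy}. There is no real obstacle here: the lemma is a bookkeeping statement reconciling the two quantitative hypotheses in \eqref{cond_ctilde_h} with the concavity-derived velocity gap \eqref{speeds}, and its purpose is only to fix constants for the geometric picture used in the coalescence argument of Proposition \ref{prop_evol_disc_finite}. The only subtlety worth flagging is that the argument implicitly assumes $\varepsilon$ is small enough for \eqref{speeds} to apply, which is the same smallness already imposed before \eqref{speeds}.
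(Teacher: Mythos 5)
Your proof is correct and follows essentially the same route as the paper: compute $v'_2-v'_3=(v_2-v_3)-\tfrac{16h}{\tau\varepsilon}(\rho_0+2\varepsilon)$, bound the subtracted term by $C'\varepsilon/2$ via the second hypothesis in \eqref{cond_ctilde_h}, use \eqref{speeds} for $v_2-v_3>C'\varepsilon$, and conclude with the first hypothesis $C'\varepsilon\tau>6L$. Nothing is missing.
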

Lemmas \ref{lemma_mut1} and \ref{lemma_mut2} imply that, with probability tending to $1$ as $N\to+\infty$,  
all the second class ${\eta}$ particles which at time $N\tau_0$ were in $[Nu_0,N(u_0+L)]$ 
are at time $N(\tau_0+\tau)$ to the right of all the third class ${\eta}$ particles which at time $N\tau_0$ were in $[N(u_0+2L),N(u_0+3L)]$. \\ \\
 Recall now the coalescence rules described in Subsection 
\ref{sec:class} between $\widetilde{\xi}$ particles and different 
classes of ${\eta}$ particles.  A consequence of 
Lemmas \ref{lemma_mut1} and \ref{lemma_mut2} is the following. 
\begin{corollary}\label{cor_mut1}
 Under conditions \eqref{cond_ctilde_h}, 
with probability tending to $1$ as $N\to+\infty$, the following holds 
during the time interval $[\tau_0,\tau_0+\tau]$: either (i) all the 
$\widetilde{\gamma}$ particles in $[N(u_0+L),N(u_0+2L)]$ have coalesced, 
or (ii) all the $\widetilde{\beta}^2$ particles in $[Nu_0,N(u_0+L)]$ have 
coalesced at least once, or (iii) all the $\widetilde{\beta}^3$ particles 
in $[N(u_0+2L),N(u_0+3L)]$ have coalesced at least once.
\end{corollary}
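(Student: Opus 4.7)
My plan is to derive the trichotomy by contradiction, combining the speed estimates of Lemma~\ref{lemma_mut1}, the separation inequality \eqref{vprimes_satisfy} of Lemma~\ref{lemma_mut2}, and a crossing-exclusion principle for the coupled particle system. Assume toward contradiction, on the probability-$1$ event where the conclusions of Lemmas~\ref{lemma_mut1} and \ref{lemma_mut2} hold, that none of (i), (ii), (iii) is true. Then I can select three labelled particles with persistent identities throughout $[N\tau_0, N(\tau_0+\tau)]$: a class-$2$ $\eta$ particle $P_2$ located at some $x_2 \in [Nu_0, N(u_0+L)]$ at time $N\tau_0$ that remains a $\widetilde{\beta}^2$; a class-$3$ $\eta$ particle $P_3$ located at some $x_3 \in [N(u_0+2L), N(u_0+3L)]$ that remains a $\widetilde{\beta}^3$; and an unmatched $\widetilde{\xi}^{\rho_0}$ particle $Q$ (a $\widetilde{\gamma}$) located at some $y_0 \in [N(u_0+L), N(u_0+2L)]$ that remains unmatched. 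The initial positions satisfy $x_2 \leq y_0 \leq x_3$.

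The key geometric fact is a crossing-exclusion principle: because the dynamics is nearest-neighbour on $\Z$ in continuous time, two particles at distinct sites can exchange their left-to-right order only by occupying a common site at some intermediate moment. By the coalescence rules (I)--(II) of the coupled dynamics recalled in Subsection~\ref{sec:class}, any event in which an unmatched $\eta$ particle shares a site with an unmatched $\widetilde{\xi}^{\rho_0}$ particle triggers a matching which reduces the total discrepancy count $\mathfrak{D}$. Under the persistence hypotheses above, neither the pair $(P_2, Q)$ nor the pair $(Q, P_3)$ can ever share a site. Consequently the initial ordering is preserved pathwise, and in particular $\mathrm{pos}(P_2) \leq \mathrm{pos}(P_3)$ at time $N(\tau_0+\tau)$.

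On the other hand, Lemma~\ref{lemma_mut1}(i) applied with reference point $Nu_0$ forces $\mathrm{pos}(P_2) \geq N(u_0 + v'_2 \tau)$ at time $N(\tau_0+\tau)$, while Lemma~\ref{lemma_mut1}(ii) applied with reference point $N(u_0+3L)$ forces $\mathrm{pos}(P_3) \leq N(u_0 + 3L + v'_3 \tau)$ at the same time. By Lemma~\ref{lemma_mut2}, $N(u_0+v'_2\tau) > N(u_0+3L+v'_3\tau)$, so $\mathrm{pos}(P_2) > \mathrm{pos}(P_3)$, which contradicts the preserved ordering. Hence at least one of (i), (ii), (iii) must hold on an event of probability tending to $1$ as $N\to+\infty$.

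The main difficulty lies in a careful bookkeeping of the coupled-dynamics rules of Subsection~\ref{sec:class}: I must justify that $P_2$, $P_3$, $Q$ have well-defined labelled trajectories under the Harris construction, and verify that the only mechanism for $P_2$ or $P_3$ to lose its $\widetilde{\beta}^i$ status, respectively for $Q$ to lose its $\widetilde{\gamma}$ status, is a genuine coalescence of type (I) or (II); in particular, I need to check that the class-reassignment events of type (III) cannot silently resolve the relevant discrepancies in a way that would invalidate the persistence hypotheses driving the contradiction. Once this bookkeeping is in place, the crossing argument and the speed separation deliver the contradiction in a single stroke, yielding the corollary.
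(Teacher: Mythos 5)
Your argument is essentially the paper's own proof: the paper likewise argues by contradiction, uses Lemmas \ref{lemma_mut1} and \ref{lemma_mut2} to force the (never-coalesced) second-class particles from the left interval to end up to the right of the third-class ones from the right interval, and then rules out a crossing of an uncoalesced $\widetilde{\beta}$ particle with an uncoalesced $\widetilde{\gamma}$ particle because, with nearest-neighbour jumps, a crossing forces the two to share a site, at which point rules (I)--(II) of Subsection \ref{sec:class} impose a coalescence; your single-representative, pathwise order-preservation phrasing is only a cosmetic variant of this. The type-(III) bookkeeping you flag (whether a never-coalesced $\eta$ particle could become matched by a partner switch and then cross while paired) is indeed the one delicate point, but the paper's proof treats it no more explicitly than you do, implicitly identifying ``never coalesced'' with ``still unmatched'', so your proposal is at the same level of completeness as the published argument.
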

{\em Step four: counting coalescences.}
 We apply Corollary \ref{cor_mut1} as follows. First we take 
\[
L=\delta, \quad\tau_0=s,\quad\tau=C_2\delta/\varepsilon
\]
Choices of  $C_2$ and $h$ are then dictated by
condition \eqref{cond_ctilde_h}, which becomes here
\[ C_2>6/C',\quad h<C_2C'\delta\varepsilon/[32(\rho_0+2\varepsilon)]\]
Then we take $[u_0+L,u_0+2L]=J_k^{l^*}$ given by \eqref{def:Jkl},
hence $[u_0,u_0+L]=J_{k}^{l^*-1}$ and $[u_0+2L,u_0+3L]=J_{k}^{l^*+1}$.
In  Lemma \ref{lemma_intervals},  we choose $n=n^*$ defined in \eqref{no_interaction} below so that two successive intervals $J_{k}^{l^*}$ and $J_{k+1}^{l^*}$ (which are separated by a distance $(n-1)\delta$) do not interact
on a time interval of length $\tau=C_2\delta/\varepsilon$. In view of the finite propagation property (Lemma \ref{lemma_finite_prop}), the condition for this is
\be\label{no_interaction}
(n-1)\delta>2VC_2\frac{\delta}{\varepsilon}\Leftrightarrow 
n\geq n^*:=1+\left\lfloor\frac{2VC_2}{\varepsilon}\right\rfloor,
\ee
 By Corollary \ref{cor_mut1}, 
in the time interval $[s,s+C_2\delta/\varepsilon]$, 
a number of coalescences at least
\be\label{coalescence_k}
\mathfrak{n}_k^{l^*}:=\min\left(
\sum_{x\in J_{k}^{l^*-1}}\widetilde{\beta}^{N,2}_{Ns}(x),
\sum_{x\in J_{k}^{l^*}}\widetilde{\gamma}^N_{Ns}(x),
\sum_{x\in J_{k}^{l^*+1}}\widetilde{\beta}^{N,3}_{Ns}(x)
\right)
\ee
has occurred involving $\widetilde{\gamma}$ particles in the middle interval  $J^{l^*}_k$, or $\widetilde{\beta}$ particles
in the surrounding intervals  $J^{l^*\pm 1}_k$. 
 It follows from definition \eqref{no_interaction} of $n^*$ and finite propagation property  
that no $\widetilde{\beta}$ or $\widetilde{\gamma}$ 
particle can be involved simultaneously in coalescences for different values of $k$.
Hence, the total number of coalescences in the time interval $[s,s+C_2\delta/\varepsilon]$ 
is at least
the sum (over $k$) of $\mathfrak{n}_k^{l^*}$ in \eqref{coalescence_k}. Besides, 
by Lemma \ref{lemma_classes} and \eqref{def:rho^i_1}, with probability tending to 
$1$ as $N\to+\infty$, we have
\be\label{many_betas}
\min\left(
\sum_{x\in J_{k}^{l^*-1}}\widetilde{\beta}^{N,2}_{Ns}(x),
\sum_{x\in J_{k}^{l^*+1}}\widetilde{\beta}^{N,3}_{Ns}(x)
\right)\geq \frac{N\delta\varepsilon}{4}
\ee
On the event where \eqref{mathfrak}, \eqref{many_betas} and 
Lemma \ref{lemma_intervals} hold simultaneously,  for $\varepsilon$ small enough,  we have
\begin{eqnarray*}
\sum_{k=0}^{\left\lfloor\frac{K-1-l^*}{ n^*}\right\rfloor}\mathfrak{n}_k^{l^*}
&\geq& N\frac{\Delta\widetilde{\mathfrak{e}}_N(s)}{ n^*\delta C_1}\min\left(\frac{\delta\varepsilon}{4},\frac{\delta\widetilde{\mathfrak{e}}_N(s)}{2}\right)\\
&&=
N\frac{\Delta\widetilde{e}_N(s)}{\delta C_1}\frac{\varepsilon}{ 3V C_2}\min\left(\frac{\delta\varepsilon}{4},\frac{\delta\widetilde{{e}}_N(s)}{2}\right)
\end{eqnarray*}
 where we used $n^*\leq 3VC_2/\varepsilon$ for small enough $\varepsilon$.  
Since every coalescence makes the sum in \eqref{def_etilden} decrease by one unit, the result follows
with $A=C_2$ and  $B=\Delta/(24VC_1C_2)$. 
\subsection{Proposition \ref{prop_mean_disc} implies Proposition \ref{prop_nomore_disc_2}}
\label{subsec:prop_implies}
 As announced we will prove \eqref{coupling_2_tilde}, the proof of \eqref{coupling_1_tilde} being similar.
We examine the evolution of our system between times $Nt_1$ and $Nt$, where 
$t_1$ was defined in \eqref{def_t1}.
As in \eqref{def:xk}, we divide the space interval $[u-\Delta,u+\Delta]$ into $K$ 
successive subintervals $I_k$, but now for the length of these subintervals we choose 
\be\label{new_choice}
\delta:=\varepsilon\Delta/(\kappa V),
\ee
where $\kappa$ is a constant such that 
\be\label{kappa}2\kappa V/\varepsilon\in\N,\quad \kappa>\frac{48}{C'}\ee
 (the first condition means that $[u-\Delta,u+\Delta]$ can indeed be divided into subintervals of length $\delta$).
Using Lemma \ref{lemma_classes} for  $s=t_1:=\Delta/(8V)$ (cf. \eqref{def_t1}),  we divide 
${\eta}$ particles at time $Nt_1$ into four classes.
We apply Corollary \ref{cor_mut1} with 
\be\label{newchoice_lemma}
\tau_0=t_1, \quad
\tau=\Delta/( 8 V),\quad
L:=\delta=\varepsilon\Delta/(\kappa V),\quad
[u_0,u_0+L]=I_k
\ee
This is possible because, thanks to \eqref{new_choice}--\eqref{newchoice_lemma},
the first condition in \eqref{cond_ctilde_h} is satisfied. 
By Proposition \ref{prop_mean_disc}, the probability of events 
(ii) and (iii) in Corollary \ref {cor_mut1} vanishes as $N\to+\infty$, as either of 
these events implies the existence of order $N\widetilde{\gamma}$ particles at time $t_1$.
Hence event (i), that is the coalescence of all $\widetilde{\gamma}$ particles located at time $Nt_1$ in $I_k$, has probability tending to $1$ as $N\to+\infty$. Since (by \eqref{new_choice}) the number of these intervals
remains fixed as $N\to+\infty$, a union bound implies the desired conclusion.
\subsection{Proofs of lemmas}
\label{subsec:mean}
\begin{proof}{Lemma}{lemma_intervals}
 Since $\widetilde{\gamma}^N_{Ns}\leq\widetilde{\xi}^{\rho_0}_{Ns}$, 
by the law of large numbers for $\widetilde{\xi}^{\rho_0}_{Ns}\sim\mu^{\alpha,\rho_0}$, 
 there exists a constant $C_1>0$ such that, with probability tending to $1$
as $N\to+\infty$, 
\be\label{bounded_density}
(N\delta)^{-1}\sum_{z=y}^{y+\lfloor N\delta\rfloor}\widetilde{\gamma}^N_{Ns}(z)\leq C_1
\ee 
for every $y\in\Z$ such that 
$N(u-\Delta)\leq y\leq y+\lfloor N\delta\rfloor \leq N(u+\Delta)$.\\
For $l\in\{0,\ldots,n-1\}$, let
\be\label{def:Dlk-Dl}
\widetilde{\mathfrak{e}}_N^l(s) := \sum_{k=0}^{\left\lfloor\frac{K-1-l}{n}\right\rfloor}
\delta\widetilde{\mathfrak{e}}_{N, kn+l}(s)
\ee
Since  
\[
\mathfrak{e}_N(s)=(2\Delta)^{-1}\sum_{l=0}^{n-1}\widetilde{\mathfrak{e}}_N^l(s),
\]
there exists $l=l^*\in\{0,\ldots,n-1\}$ such that
\be\label{exists_such_that}
\widetilde{\mathfrak{e}}_N^{l^*}(s)\geq \frac{2\Delta}{n}\widetilde{\mathfrak{e}}_N(s)
\ee
Then 
\be\label{D-a}
\widetilde{\mathfrak{e}}_N^{l^*}(s)  =  
\sum_{k\in K_{l^*}}\delta\widetilde{\mathfrak{e}}_{N, kn+l^*}(s)
+\sum_{k\in B_{l^*}}\delta \widetilde{\mathfrak{e}}_{N, kn+l^*}(s)
\ee
where  $K_{l^*}$ was defined in  \eqref{def_goodset}, and $B_{l^*}$ is its complement in 
$\{0,\ldots,\lfloor\frac{K-1-l^*}{n}\rfloor\}$.
By \eqref{bounded_density},
\be\label{D-b}
\sum_{k\in K_{l^*}}\widetilde{\mathfrak{e}}_{N, kn+l^*}(s)\leq C_1 |K_{l^*}|
\ee
while by definition of $B_{l^*}$, we have
\be\label{D-c}
\sum_{k\in B_{l^*}}\widetilde{\mathfrak{e}}_{N, kn+l^*}(s)\leq |B_{l^*}|\frac{\widetilde{\mathfrak{e}}_{N}(s)}{2}
\ee
Thus,  by  \eqref{exists_such_that},  \eqref{D-a}, \eqref{D-b}, \eqref{D-c}, 
\[
C_1|K_{l^*}|\delta+|B_{l^*}|\delta
\frac{
\widetilde{\mathfrak{e}}_{N}(s)
}{2}\geq 
\frac{2\Delta}{n}\widetilde{\mathfrak{e}}_{N}(s)
\]
Since (recall the definition of $K$ in \eqref{def:xk}) $|B_{l^*}|\leq 2\Delta/(n\delta)$, the result follows.
\end{proof}
\mbox{}\\ \\
\begin{proof}{Lemma}{lemma_classes}
Let $E_N$ denote the event that
\be\label{def_event_en}
\sum_{y\in NJ}\widetilde{\xi}^{\rho_0}_{Ns}(y)\leq
\left
\lfloor Nh\left(\rho_0+\frac{\varepsilon}{4}\right)\right\rfloor
\ee
for all $J\in\mathcal I$.
 Since (cf. \eqref{def_xitilde}) $\widetilde{\xi}^{\rho_0}\leq\xi^{\rho_0}$, 
by the law of large numbers  (recall that
${\xi}^{\rho_0}\sim\mu^{\alpha,\rho_0}$),  the event $E_N$ 
has probability tending to $1$ as $N\to+\infty$. 
Besides, by definition of  $\rho_0,\rho_1$  (see Proposition \ref{prop_mean_disc})
 and $\varepsilon$ (see \eqref{choose_m}), 
  we have $\rho(t_0,u')>\rho_0+2\varepsilon$ for 
every $u'\in[u-\Delta,u+\Delta]$. 
Let $F_N$ denote the event that 
\be\label{def_event_fn}
\sum_{y\in N J}{\eta}^N_{Ns}(y)>\lfloor Nh(\rho_0+3\varepsilon/2)\rfloor
\ee
for all $J\in\mathcal I$.
By Theorem 
\ref{th_hydro},
and by definition of  $\rho_0,\rho_1$,
 $\varepsilon$,  
 the event $F_N$ 
has probability tending to one as $N\to+\infty$. 
In the sequel we assume that the event $E_N\cap F_N$ holds. 
We set, for $J\in\mathcal I$,
\begin{eqnarray*}
 \mathcal N^{4,N,J}_{Ns} =\mathcal N^{4,N}_{Ns} 
  & := &  \sum_{y\in\Z\cap N J}{\eta}^{N}_{Ns}(y)
-\lfloor Nh(\rho_0+3\varepsilon/2)\rfloor\\
\mathcal N^{3,N,J}_{Ns} = \mathcal N^{3,N}_{Ns} 
 & = & \mathcal N^{2,N,J}_{Ns}
=  \mathcal N^{2,N}_{Ns} :=  \lfloor Nh\varepsilon/2\rfloor
\end{eqnarray*}
Since we are on  $F_N$  we have $\mathcal N^{4,N}_{Ns}\geq 0$, and because we are on  $E_N$,  we have
\[
\sum_{j=2}^4 \mathcal N^{j,N}_{Ns}  \leq 
\left[
\sum_{y\in\Z\cap NJ}
{\eta}^N_{Ns}(y)-
\sum_{y\in\Z\cap NJ}
\widetilde{\xi}^{\rho_0}_{Ns}(y)
\right]
\]
But by convexity of the function $u\mapsto u^+:=\max(u,0)$, 
for $J\in\mathcal I$, the above quantity is bounded above by
\be\label{def_totaldisc}
P_{J}:=\sum_{y\in\Z\cap NJ}\widetilde{\beta}^N_{Ns}(y)
\ee
which represents the total number of $({\eta}-\widetilde{\xi}^{\rho_0})$ discrepancies 
in the interval $\Z\cap NJ$ at time $Ns$.
Thus we have
\be\label{bound_disc}
\sum_{j=2}^4 \mathcal N^{j,N}_{Ns}\leq P_{J}
\ee
Thanks to \eqref{bound_disc}, among the above  number $P_{J}$ of  
$({\eta}-\widetilde{\xi} ^{\rho_0})$ discrepancies, 
for each $j\in\{2,3,4\}$, we may select a subset of  $\mathcal N^{j,N}_{Ns}$ discrepancies 
that we designate as  ${\eta}$ particles  of class $j$ at time $Ns$. 
These subsets are chosen 
arbitrarily provided they are disjoint (a given particle has a unique class). 
The remaining  ${\eta}$ particles 
not belonging to this selection (which contains some more 
 $({\eta}-\widetilde{\xi}^{\rho_0})$  discrepancies 
if inequality \eqref{bound_disc} is strict) will be first class  ${\eta}$ particles. 
We then denote, for $i\in\{1,2,3,4\}$,  ${\eta}^{i,N}_{Ns}$ as the configuration of 
${\eta}$ particles of classes $1$ to $i$. 
In a more formal way, the above construction means that we define configurations 
$\widetilde{\beta}^{j,N}_{Ns}\in\N^\Z$ for $j\in\{2,3,4\}$, in such a way that
\[
\sum_{j=2}^4\widetilde{\beta}^{j,N}_{Ns}\leq\widetilde{\beta}^N_{Ns}
\]
with, for each $J\in\mathcal I$,
\[
 \sum_{y\in \Z\cap NJ} \widetilde{\beta}^{j,N}_{Ns}(y)=\mathcal N^{j,N}_{Ns}
\]
We then set for $i\in\{1,2,3,4\}$,
\[
{\eta}^{i,N}_{Ns}:={\eta}^N_{Ns}-\sum_{j=i+1}^4 \widetilde{\beta}^{j,N}_{Ns}
\]
where the sum, which  
represents the configuration of particles of classes from $i+1$ to $4$,
is interpreted as empty for $i=4$.\\ \\
For $i\in\{1,2,3\}$, the configuration ${\eta}^{i,N}_{Ns}$ has a number of particles in each
box $\Z\cap (NJ)$ that is 
of order $Nh\rho^i$,  where $\rho^i$ is defined in \eqref{def:rho^i_1}. 
\end{proof}
\mbox{}\\ \\
The proof of Lemma \ref{lemma_mut1} relies on the following lemma  (using our current estimate in Corollary \ref{corollary_current}),  which states that 
the current in the ${\eta}^i$ system is close to the corresponding equilibrium current $f(\rho^i)$.
\begin{lemma}\label{lemma_current_eta}
Let $\theta\mapsto x_\theta$ be a $\Z$-valued path such that 
$\lim_{\theta\to+\infty}\theta^{-1}x_\theta=:v\in[0,V]$, and let $X_\theta^N:=x_{N\theta}$.
Let $\tau_0>0$,  $u_0\in[u-\Delta,u+\Delta]$ and 
$0<\tau<\frac{1}{V}\min\left(u_0-(u-\Delta),u+\Delta-u_0\right)$.
Then 
\be\label{eq_current_eta}
\lim_{N\to+\infty}\Prob\left(
H_N(u_0,\tau_0,\tau)
\right)=0
\ee
where $H_N(u_0,\tau_0,\tau)$ is the event that, for some $i\in\{1,2,3\}$,
\be\label{def_HN}
\left|
N^{-1}\Gamma^\alpha_{\lfloor Nu_0+X^N_.\rfloor}(N\tau_0,N(\tau_0+\tau),{\eta}^{i,N}_{N\tau_0})-\tau(f(\rho^i)-v\rho^i)
\right|\geq 2h(\rho_0+2\varepsilon)
\ee
\end{lemma}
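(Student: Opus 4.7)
The strategy is to couple each class configuration $\eta^{i,N}$ with a stationary reference process $\xi^{\rho^i}_\cdot$ of density $\rho^i$ via the Harris system restarted at time $N\tau_0$, to compare their currents through the moving path using Corollary \ref{corollary_current}, and to evaluate the stationary current using Lemma \ref{current_critical} together with a path-motion correction. Concretely, I would evolve both $\eta^{i,N}_s$ (starting from the configuration built in Lemma \ref{lemma_classes}) and $\xi^{\rho^i}_s$ (with $\xi^{\rho^i}_{N\tau_0}\sim\mu^{\alpha,\rho^i}$ drawn independently) from time $N\tau_0$ under a common Harris system. Since $X^N_\cdot$ is independent of the Harris system, Corollary \ref{corollary_current} applied to the shifted path $\lfloor Nu_0+X^N_\cdot\rfloor$ yields, with probability tending to one,
\[
\left|\Gamma^\alpha_{\lfloor Nu_0+X^N_\cdot\rfloor}(N\tau_0,N(\tau_0+\tau),\eta^{i,N}_{N\tau_0}) - \Gamma^\alpha_{\lfloor Nu_0+X^N_\cdot\rfloor}(N\tau_0,N(\tau_0+\tau),\xi^{\rho^i}_{N\tau_0})\right| \leq \sup_{x\in I_N} |\Delta F(x)|,
\]
where $\Delta F(x):=F_{\lfloor Nu_0\rfloor}(x,\eta^{i,N}_{N\tau_0})-F_{\lfloor Nu_0\rfloor}(x,\xi^{\rho^i}_{N\tau_0})$ and the restriction on $\tau$ in the hypothesis ensures $I_N\subset[N(u-\Delta),N(u+\Delta)]$.

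Next I would bound this supremum by $2Nh(\rho_0+2\varepsilon)$ plus lower-order terms. Splitting $I_N$ along the partition $\mathcal{I}$ of Lemma \ref{lemma_classes}, a union bound combining Lemma \ref{lemma_classes}(iv) with the law of large numbers for $\mu^{\alpha,\rho^i}$ shows that for any fixed $\epsilon''>0$, on an event of probability tending to one, the per-box density discrepancy between $\eta^{i,N}_{N\tau_0}$ and $\xi^{\rho^i}_{N\tau_0}$ is at most $\epsilon''Nh$. Taking $\epsilon''$ of order $h/\Delta$ and small, the cumulative contribution to $\Delta F(x)$ over the $O(1/h)$ full boxes in $I_N$ is bounded by roughly $Nh(\rho_0+2\varepsilon)$, while the single partial box at the endpoint contributes at most $Nh(\rho^i+\epsilon'')\leq Nh(\rho_0+2\varepsilon)$, giving the total claimed.

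For the stationary current, the identity
\[
\Gamma^\alpha_{\lfloor Nu_0+X^N_\cdot\rfloor}(\xi^{\rho^i}) = \Gamma^\alpha_{\lfloor Nu_0\rfloor}(\xi^{\rho^i}) - \sum_{y=\lfloor Nu_0\rfloor+1}^{\lfloor Nu_0+X^N_{N(\tau_0+\tau)}\rfloor}\xi^{\rho^i}_{N(\tau_0+\tau)}(y)
\]
(with a sign flip if the path endpoint lies to the left) follows from \eqref{current} after truncating $\xi^{\rho^i}_{N\tau_0}$ to the right of $N(u+\Delta)$, a modification that does not affect the current on the relevant scale by finite propagation (Lemma \ref{lemma_finite_prop}). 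By stationarity of $\mu^{\alpha,\rho^i}$ and a restart of Lemma \ref{current_critical} at time $N\tau_0$, the first term divided by $N$ converges to $\tau f(\rho^i)=\tau(p-q)\overline{R}^{-1}(\rho^i)$ in $L^1$. The second term, again by stationarity ($\xi^{\rho^i}_{N(\tau_0+\tau)}\sim\mu^{\alpha,\rho^i}$) combined with a law of large numbers over an interval of length $Nv\tau+o(N)$, converges in probability to $-v\tau\rho^i$. Assembling these estimates proves \eqref{eq_current_eta}. The principal obstacle is the uniform density control in the second step: the cumulative error over $O(1/h)$ boxes must remain strictly smaller than the target precision $Nh(\rho_0+2\varepsilon)$, which forces the tolerance $\epsilon''$ to be chosen as a small multiple of $h/\Delta$; the path-motion identity for the stationary (unbounded) configuration also requires care, handled here by truncation and finite propagation.
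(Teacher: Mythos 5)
Your proposal follows essentially the same route as the paper's proof: a two-sided application of Corollary \ref{corollary_current} against the coupled stationary process $\xi^{\rho^i}_\cdot$, with the supremum of $F_{\lfloor Nu_0\rfloor}(x,\eta^{i,N}_{N\tau_0})-F_{\lfloor Nu_0\rfloor}(x,\xi^{\rho^i}_{N\tau_0})$ controlled by decomposing into the boxes of $\mathcal I$ (inner boxes negligible by \eqref{density_classes} and the law of large numbers, boundary boxes giving the $2h(\rho_0+2\varepsilon)$ allowance), and the stationary current across the moving path evaluated via Lemma \ref{current_critical} together with the swept-particle count yielding the $-v\rho^i$ correction. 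The only slips are cosmetic and easily absorbed by the slack $\rho^i\leq\rho_0+\tfrac{3\varepsilon}{2}<\rho_0+2\varepsilon$: generically there are partial boxes at \emph{both} endpoints (at $\lfloor Nu_0\rfloor$ and at $x$), so your tolerance $\epsilon''$ must be chosen small enough that the full-box and $o(N)$ errors stay strictly below the threshold, and the time index in $X^N_{N(\tau_0+\tau)}$ should read $X^N_{\tau_0+\tau}$.
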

\begin{proof}{Lemma}{lemma_current_eta}
 By Lemma \ref{current_critical}, the limit
\be\label{use_eq_current}
\lim_{N\to+\infty}N^{-1}\Gamma^\alpha_{\lfloor Nu_0+X^N_.\rfloor}(N\tau_0,\xi^{\rho^i}_{\tau_0})=f(\rho^i)
\ee
holds in probability.
Next, we apply Corollary \ref{corollary_current} to ${\eta}^{i,N}_{N\tau_0}$ 
and ${\xi}^{\rho^i}_{N\tau_0}$, and then  (exchanging their roles) to ${\xi}^{\rho^i}_{N\tau_0}$ and ${\eta}^{i,N}_{N\tau_0}$. Using \eqref{density_classes} from Lemma \ref{lemma_classes},
and the law of large numbers for $\xi^{\rho^i}$,
we find that the supremum on the r.h.s. of \eqref{current_comparison_2} is $o(N)+N\rho^i$, where
$o(N)$ denotes a random variable such that $o(N)/N$ vanishes in probability. Indeed, 
when evaluating the quantity
\be\label{difference_F}
F_{\lfloor Nu_0\rfloor}(x,{\eta}^{i,N}_{N\tau_0})-F_{\lfloor Nu_0\rfloor}(x,\xi^{\rho^i}_{N\tau_0}),
\ee
we decompose the interval between $\lfloor Nu_0\rfloor$ and $x$ into inner subintervals $I\in\mathcal I$ (see Lemma \ref{lemma_classes}) of length $h$ contained in it, and two possibly overlapping intervals of length $h$ at the boundaries.
On each inner subinterval, by \eqref{density_classes}, we have
\[
\sum_{z\in N\,I}\left[
{\eta}^{i,N}(z)-\xi^{\rho^i}(z)
\right]=o(N)
\]
On the other hand, the contribution of each boundary subinterval to the difference \eqref{difference_F} cannot exceed $h[o(N)+N\rho^i]$.
This implies \eqref{eq_current_eta}. 
\end{proof}
\mbox{}\\ 
\begin{proof}{Lemma}{lemma_mut1}
By Lemma \ref{lemma_current_eta}, we may assume that we are on the complement of $H_N=H_N(u_0,\tau_0,\tau)$.
We need to define speeds $v'_2$ and $v'_3$ satisfying  (i)  $v_3<v'_3<v'_2<v_2$,
 (ii)  $u_0+v'_2 \tau>u_0+3L+v'_3\tau$,
that is equivalent to
\be\label{vprime_bis}
v'_2-v'_3>\frac{3L}{\tau},
\ee
and  (iii) 
\begin{eqnarray}\label{condition_vprime2}
\tau\left[
f(\rho^2)-f(\rho^1)-v'_2(\rho^2-\rho^1)
\right]-4h(\rho_0+2\varepsilon)& \geq & 0\\
\label{condition_vprime3}
\tau\left[
f(\rho^3)-f(\rho^2)-v'_3(\rho^3-\rho^2)
\right]+4h(\rho_0+2\varepsilon)& \leq & 0
\end{eqnarray}
We have that \eqref{condition_vprime2}--\eqref{condition_vprime3} is equivalent to 
\be\label{vprime23_bis}
\min(v_2-v'_2,v'_3-v_3)
\geq
\frac{8h}{\tau\varepsilon}(\rho_0+2\varepsilon)
\ee
which is automatically satisfied by \eqref{def_vprime2}--\eqref{def_vprime3}.
We will check below that, given condition \eqref{cond_ctilde_h}, this choice of $v'_2$ and $v'_3$ also satisfies \eqref{vprime_bis}.
Having checked that $v'_2$ and $v'_3$ satisfy \eqref{vprime_bis} and \eqref{vprime23_bis},
the result follows from Lemma \ref{lemma_current_eta} and \eqref{current}.
Indeed, on the complement of $H_N$, the l.h.s. of \eqref{condition_vprime2} is a lower bound for the 
macroscopic flux of second class particles through a  path 
starting from  $Nu_0$  and moving with asymptotic speed $v'_2$.
While the l.h.s. of \eqref{condition_vprime3} is an upper bound for the 
macroscopic flux of second class particles through a path 
starting from  $N(u_0+3L)$  and moving with asymptotic speed $v'_2$.
\end{proof}
\mbox{}\\ \\\
\begin{proof}{Lemma}{lemma_mut2}
We must check that $v'_2$ and $v'_3$ defined by \eqref{def_vprime2}--\eqref{def_vprime3}
do satisfy \eqref{vprime_bis}. Indeed, \eqref{cond_ctilde_h} is equivalent to
\[
C'\varepsilon>6L/\tau,\quad
\frac{16h}{\tau\varepsilon}(\rho_0+2\varepsilon)\leq\frac{C'}{2}\varepsilon
\]
Thus
\begin{eqnarray*}
v'_2-v'_3 & = &  (v_2-v_3)-\frac{16h}{\tau\varepsilon}(\rho_0+2\varepsilon)\\
& \geq & C'\varepsilon-\frac{C'}{2}{\varepsilon}
\geq \frac{C'}{2}\varepsilon\geq\frac{3L}{\tau}
\end{eqnarray*}
\end{proof}
\begin{proof}{Corollary}{cor_mut1}
 Consider the set of second class ${\eta}$ particles initially (that is at time $N\tau_0$) in $[Nu_0,N(u_0+L)]$ which never coalesced in the time interval $[N\tau_0,N(\tau_0+\tau)]$,
the set of third class ${\eta}$ particles initially in $[N(u_0+2L),N(u_0+3L)]$ which never coalesced in this time interval $[N\tau_0,N(\tau_0+\tau)]$, and 
the set of  $\widetilde{\gamma}$ particles initially in $[N(u_0+L),N(u_0+2L)]$ which never coalesced in this time interval $[N\tau_0,N(\tau_0+\tau)]$. 
Let $E_N$ denote the event that these sets are all nonempty. Our goal is to show that $E_N$ has vanishing probability as $N\to+\infty$.
By Lemmas \ref{lemma_mut1} and \ref{lemma_mut2} (see explanations following the statement of Lemma \ref{lemma_mut1}), on an event $F_N$ with probability tending to $1$ as $N\to+\infty$,
any particle in the third set has either been crossed by all those of the first set, or by all those 
of the second one. Let us place ourselves on $E_N\cap F_N$. Suppose a particle from the third set finds itself during our time interval at the same site as a particle from the first set. If the ${\eta}$ particle has just jumped, by rule (II) of Subsection \ref{sec:class},
it coalesces with one of the uncoalesced $\widetilde{\xi}$ particles there, which is impossible. If the $\widetilde{\xi}$ particle has just jumped, by rule (I), it coalesces with one of the uncoalesced ${\eta}$ particles there, which is impossible. A similar argument shows that a particle from the third set cannot have been crossed by one from the second set. Hence $E_N\cap F_N=\emptyset$, which implies the desired result.
\end{proof}
\section{Proof of Theorem \ref{th_strong_loc_eq_super}}\label{sec:proof_loc_eq_2}
 We will prove \eqref{eq:loc_eq_super}, and combine it with 
 a lower bound derived from \eqref{coupling_2} to obtain \eqref{eq:loc_eq_super_bis}. 
The proof of \eqref{eq:loc_eq_super} involves comparison with a finite system limited by two sources.
To this end, Subsections \ref{subsec:jackson} and \ref{subsec:truncation}
contain preliminary material about invariant measures for disordered finite systems. In the latter two subsections, a generic environment
will be denoted by $\gendis$ to avoid confusion with the environment $\alpha$ of Theorem \ref{th_strong_loc_eq_super}.
\subsection{Invariant measure for the finite process with boundaries}\label{subsec:jackson}
Let 
$\gendis\in{\bf A}$, 
$l<0<r$, and 
$\overline{\eta}_0\in\overline{\mathbf X}$ 
a 
configuration  such that $\overline{\eta}_0(x)\in\N$ for 
$x\in(l,r)$, and
\be\label{infinite_ends}
\overline{\eta}_0(l)=\overline{\eta}_0(r)=+\infty
\ee
Consider the process $(\overline{\eta}^{\gendis}_t)_{t\geq 0}$, with initial configuration 
$\overline{\eta}_0$, and 
generator $L^{\gendis}$ (see \eqref{generator}). The restriction 
$(\eta_t^{\gendis,l,r})_{t\geq 0}$ of $(\overline{\eta}_t^{\gendis})_{t\geq 0}$ to 
$(l,r)$ is a Markov process on $\N^{(l,r)}$ with generator given by,  
for an arbitrary (since $g$ is bounded)  function $f$ on  $\N^{(l,r)}$,
\begin{eqnarray}\nonumber
L^{\gendis,l,r}f(\eta)&=&\sum_{x=l+1}^{r-2}
p\gendis(x)g(\eta(x))[f(\eta^{x,x+1})-f(\eta)]\\
\nonumber & + & \sum_{x=l+1}^{r-2} q\gendis(x+1)g(\eta(x+1))[f(\eta^{x+1,x})-f(\eta)]\\
\nonumber & + & q\gendis(l+1)g(\eta(l+1))[f(\eta-\delta_{l+1})-f(\eta)] \\ \nonumber
\nonumber & + & p\gendis(r-1)g(\eta(r-1))[f(\eta-\delta_{r-1})-f(\eta)]\\ \nonumber 
\nonumber &+ & p\gendis(l)[f(\eta+\delta_{l+1})-f(\eta)]\\
\label{gen_open} & + & q\gendis(r)[f(\eta+\delta_{r-1})-f(\eta)]
\end{eqnarray} 
Indeed, the effect of \eqref{infinite_ends} is to turn sites $l$ and $r$ 
into sources/sinks and isolate (since jumps are nearest-neighbour) the space interval 
$\Z\cap(l,r)$ from the part of the system located to the left of $l$ and right of $r$. 
{}From the point of view of the finite system, the source at site $l$ then behaves 
as a reservoir creating a particle at $l+1$ at rate $p\gendis(l)g(+\infty)=p\gendis(l)$, 
that is the rate of a particle jump from $l$ to $l+1$ in the infinite system with 
generator \eqref{generator}. A particle jumping from $l+1$ to $l$ in the infinite system 
is viewed in the finite system as a particle annihilation at site $l+1$ with the same rate
$q\gendis(l+1)g(\eta(l+1))$. Similar equivalences hold at the right end of the finite system.\\ \\
The above process is an open Jackson network, whose invariant measure is 
well-known in queuing theory. In our case this measure is explicit:
\begin{lemma}\label{lem-mu_c'} (\cite[Lemma 4.1]{bmrs1}) 
Set, for $x\in [l,r] \cap\Z$,
\be\label{def_lambda_inv}
\lambda^{\gendis,l,r}(x):=\frac{\gendis(r)-\gendis(l)}{1-\left(\frac{q}{p}\right)^{r-l}}
\left(\frac{q}{p}\right)^{r-x}+
\frac{\gendis(l)-\gendis(r)\left(\frac{q}{p}\right)^{r-l}}{1-\left(\frac{q}{p}\right)^{r-l}}
\in[\gendis(l),\gendis(r)]
\ee
The process with generator \eqref{gen_open} is positive recurrent if and only if 
\be\label{cond_rec}\lambda^{\gendis,l,r}(x)<\gendis(x),\quad\forall x\in(l,r)\cap\Z\ee
If condition \eqref{cond_rec} is satisfied, the unique invariant measure of 
the process is the product measure 
$\mu^{\gendis,l,r}$ on $\N^{(l,r)\cap\Z}$ with marginal 
$\theta_{\lambda^{ \kappa,l,r }(x)/\gendis(x)}$ at site $x\in(l,r)\cap\Z$,  that is
\be\label{def_inv_open}
\mu^{\gendis,l,r}(d\eta):=\prod_{x\in\Z\cap(l,r)}\theta_{\lambda^{\kappa,l,r}(x)/\gendis(x)}(d\eta(x))
\ee
\end{lemma}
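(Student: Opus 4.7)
The plan is to derive the formula for $\lambda^{\gendis,l,r}$ from the traffic equations of the open zero-range network, check invariance of the product measure via the defining relation of $\theta_\beta$, and identify the positive-recurrence condition with the finiteness of the single-site marginals.

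First I would postulate a product invariant measure $\mu=\bigotimes_{x\in(l,r)\cap\Z}\theta_{\lambda(x)/\gendis(x)}$ with an unknown rate function $\lambda$. The key identity is the defining relation
\[
\theta_\beta(n+1)g(n+1)=\beta\,\theta_\beta(n),
\]
which yields $\Exp_{\theta_\beta}[g(n)]=\beta$. Under $\mu$ the mean rates out of a bulk site $x$ are therefore $p\lambda(x)$ and $q\lambda(x)$, while the mean rates in are $p\lambda(x-1)$ and $q\lambda(x+1)$, with the convention $\lambda(l):=\gendis(l)$ and $\lambda(r):=\gendis(r)$ encoding the source/sink creation rates $p\gendis(l)$ and $q\gendis(r)$ at the boundaries. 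Equating inflow and outflow at every interior site gives the discrete harmonic equation
\[
\lambda(x)=p\lambda(x-1)+q\lambda(x+1),\quad l<x<r,
\]
whose general solution has the form $\lambda(x)=A+B(p/q)^{x}$. Matching the two boundary values determines $A$ and $B$ uniquely and reproduces the stated formula for $\lambda^{\gendis,l,r}$; since $p>q$, the resulting function interpolates monotonically between $\gendis(l)$ and $\gendis(r)$, hence takes values in $[\gendis(l),\gendis(r)]$.

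Next I would verify invariance of $\mu^{\gendis,l,r}$ for $L^{\gendis,l,r}$. The telescoping identity
\[
\Exp_{\theta_\beta}[g(n)(h(n-1)-h(n))]=-\beta\,\Exp_{\theta_\beta}[h(n+1)-h(n)],
\]
a direct consequence of the same defining relation of $\theta_\beta$, reduces $\int L^{\gendis,l,r}f\,d\mu$ for a single-site cylinder $f$ to a prefactor of the form $-\lambda(x)+p\lambda(x-1)+q\lambda(x+1)$ at each interior site, where the source/sink rates $p\gendis(l)$ and $q\gendis(r)$ take the place of $p\lambda(l)$ and $q\lambda(r)$ at the boundary sites $l+1$ and $r-1$. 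Each such prefactor vanishes by the harmonic equation. For a general multi-site cylinder $f$, the analogous computation goes through by summing over directed edges and exploiting the product structure of $\mu$, so that bulk and boundary contributions cancel edge by edge.

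Finally, $\theta_\beta$ is a probability measure on $\N$ if and only if $\beta\in[0,1)$ by \eqref{properties_g}--\eqref{ginfty}, so $\mu^{\gendis,l,r}$ is a probability measure on $\N^{(l,r)\cap\Z}$ if and only if $\lambda^{\gendis,l,r}(x)<\gendis(x)$ at every interior site, which is precisely \eqref{cond_rec}. Under this condition the irreducible Markov chain on the countable state space $\N^{(l,r)\cap\Z}$ admits an invariant probability measure, is therefore positive recurrent, and $\mu^{\gendis,l,r}$ is its unique invariant measure. Conversely, if \eqref{cond_rec} fails at some interior site, then the traffic equations force any product-form stationary measure to have a non-summable marginal there; since for the open zero-range Jackson network product form is the only possible stationary structure, no invariant probability measure exists, and positive recurrence is ruled out. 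The main technical obstacle is the invariance computation in the second paragraph: bookkeeping the asymmetric source and sink contributions at $l+1$ and $r-1$ carefully enough that, through the conventions $\lambda(l)=\gendis(l)$ and $\lambda(r)=\gendis(r)$, they merge seamlessly into the bulk traffic equations.
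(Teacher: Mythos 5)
Your route is the classical Jackson-network argument: guess a product measure with fugacities $\lambda(x)/\gendis(x)$, determine $\lambda$ from the traffic equations $\lambda(x)=p\lambda(x-1)+q\lambda(x+1)$ with $\lambda(l)=\gendis(l)$, $\lambda(r)=\gendis(r)$, verify invariance via the relation $\theta_\beta(n)g(n)=\beta\,\theta_\beta(n-1)$, and read off stability from normalizability of the marginals ($Z(\beta)<+\infty$ iff $\beta<1$ under \eqref{ginfty}). This is exactly what the paper relies on: it does not reprove the lemma but cites \cite[Lemma 4.1]{bmrs1} and classical queueing theory, so your sufficiency direction, the identification of $\lambda^{\gendis,l,r}$ (correct solution of the boundary-value problem, hence values in $[\gendis(l),\gendis(r)]$ by monotonicity of $A+B(p/q)^x$), and the uniqueness statement (irreducibility plus an invariant probability, with non-explosion being trivial here since the total jump rate is uniformly bounded) are all in order. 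The multi-site invariance check is only sketched, but the bond-by-bond change of variables $\eta\mapsto\eta^{x,x+1}$ using the product structure is routine bookkeeping, so I would not count that as a gap.

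There is, however, a genuine gap in the ``only if'' direction. Your argument is: if \eqref{cond_rec} fails, the product candidate is not normalizable, and ``product form is the only possible stationary structure'', hence no invariant probability exists. That last clause is precisely what would have to be proved; uniqueness of the invariant law (given existence) follows from irreducibility, but nothing you wrote excludes a non-product invariant probability when the product candidate fails to exist, so the conclusion is circular as stated. The standard repair: suppose the process with generator \eqref{gen_open} is positive recurrent with invariant law $\pi$. Applying stationarity to truncations $\eta(x)\wedge M$ of the coordinate functions and letting $M\to+\infty$, the actual mean emission rates $\nu(x):=\gendis(x)\int g(\eta(x))\,d\pi$ must satisfy the same linear boundary-value problem $\nu(x)=p\nu(x-1)+q\nu(x+1)$ for $l<x<r$, with $\nu(l)=\gendis(l)$, $\nu(r)=\gendis(r)$, whose solution is unique; hence $\nu\equiv\lambda^{\gendis,l,r}$ on $[l,r]$. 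Since by irreducibility $\pi$ charges configurations with $\eta(x)=0$ and $g\leq g(+\infty)=1$, one has $\int g(\eta(x))\,d\pi<1$, i.e. $\lambda^{\gendis,l,r}(x)<\gendis(x)$ for every $x\in(l,r)\cap\Z$, which is \eqref{cond_rec}. (A coupling/comparison argument is an alternative.) A last cosmetic point: your derivation of the general solution $A+B(p/q)^x$ implicitly assumes $q>0$; the case $q=0$ is allowed ($p\in(1/2,1]$) and there the recursion is first order, though the stated formula for $\lambda^{\gendis,l,r}$ remains valid.
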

\begin{remark}\label{remark_lemma_mu_c'}
If $\gendis(r)=\gendis(l)$, then the function $\lambda^{\gendis,l,r}$ defined by 
\eqref{def_lambda_inv} is constant and equal to this common value.
\end{remark}
\subsection{Truncation procedure}\label{subsec:truncation}
{}From now on, we consider the sequence $(\alpha^N)_{N\in\N}$ defined by
\be\label{shift_env}\alpha^N(.):=\tau_{x_N}\alpha(.)\ee
where $\alpha$ is the environment given in  Theorem \ref{th_strong_loc_eq_super}. 
Recall from \eqref{set_env} that $\alpha(.)$ lives on ${\bf A}=[0,1]^\Z$, that is
compact with respect to the product topology. Therefore it is
 enough to prove that the desired limit holds along any subsequence of values of $N\to+\infty$  
along which $\alpha^N$ converges to some 
$\overline{\alpha}\in\bf A$. 
Since $(x_N)_{N\in\N}$ is assumed to be a typical sequence,
$\overline{\alpha}$ satisfies the requirements of Definition \ref{def_typical}.  In particular (see point (i) of this definition), $\overline\alpha\in{\bf B}$. 
In the sequel, when writing $N\to+\infty$, it will be implicit that we restrict to such a subsequence. \\ \\
We need to introduce the following definitions.
Let $\kappa\in{\bf B}$ be an arbitrary environment. For $\varepsilon>0$, we define 
\begin{eqnarray}\label{def:ell}
A_\varepsilon(\gendis)&:=&\sup\{x\le 0:\gendis(x) \leq c+\varepsilon\}\in \Z^-\cup\{-\infty\}\,,\\
a_\varepsilon(\gendis)&:=&\inf\{x\ge 0:\gendis(x) \leq c+\varepsilon\}\in\overline{\N}\,.\label{def:err}
\end{eqnarray} 
with the usual conventions $\inf\emptyset=+\infty$ and $\sup\emptyset=-\infty$. It follows from these definitions that
\be\label{limit_a_epsilon}
\lim_{\varepsilon\to 0}A_\varepsilon(\gendis)=-\infty,\quad
\lim_{\varepsilon\to 0}a_\varepsilon(\gendis)=+\infty
\ee
\begin{eqnarray}
\label{finite_a_min}
\liminf_{x\to-\infty}\gendis(x)=c\Rightarrow\forall \varepsilon>0,\,A_\varepsilon(\gendis)>-\infty\\
\label{finite_a_plus}
\liminf_{x\to+\infty}\gendis(x)=c\Rightarrow\forall \varepsilon>0,\,a_\varepsilon(\gendis)<+\infty
\end{eqnarray}
 Recall that the environment $\alpha$ in Theorem \ref{th_strong_loc_eq_super} satisfies 
\eqref{not_too_sparse}. Thus \eqref{finite_a_min}--\eqref{finite_a_plus} imply that
$A_\varepsilon(\tau_n\alpha)$ 
and $a_\varepsilon(\tau_n\alpha)$ are finite for any $n\in\Z$. By condition (ii) of Definition \ref{def_typical}, the same holds for $A_\varepsilon(\overline\alpha)$ and $a_\varepsilon(\overline\alpha)$.
Besides,  as
 a consequence of \eqref{assumption_afgl}, we have:
\begin{lemma}\label{lemma_ergodic}  \cite[Lemma 4.4]{bmrs3} 
For every $\varepsilon>0$,
\be\label{limit_ergodic}
\lim_{n\to\pm\infty}n^{-1}a_\varepsilon(\tau_n\alpha)=0,
\lim_{n\to\pm\infty}n^{-1}A_\varepsilon(\tau_n\alpha)=0
\ee
\end{lemma}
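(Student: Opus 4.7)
The plan is to exploit the sequence $(x_k)_{k\in\Z}$ supplied by Assumption \ref{assumption_dense} to bracket $n$ between two adjacent slow sites and convert the relative spacing bound \eqref{assumption_afgl_0} into the required $o(n)$ estimate on $a_\varepsilon(\tau_n\alpha)$ and $A_\varepsilon(\tau_n\alpha)$. I carry out only the case $n\to+\infty$; the case $n\to-\infty$ is handled by the symmetric construction using slow sites far to the left, using that \eqref{assumption_afgl_0} yields $|x_{k+1}|/|x_k|\to 1$ as $k\to-\infty$.

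Fix $\varepsilon>0$. By \eqref{assumption_afgl}, there is $k_0$ such that $\alpha(x_k)\le c+\varepsilon$ as soon as $|k|\ge k_0$. For $n$ large enough, define
\[
k^+(n):=\min\{k:x_k\ge n\},\qquad k^-(n):=\max\{k:x_k\le n\}.
\]
Both are finite (because $x_k\to\pm\infty$ as $k\to\pm\infty$ and $(x_k)$ is strictly increasing), both exceed $k_0$, and both tend to $+\infty$ with $n$. They satisfy $x_{k^-(n)}\le n\le x_{k^+(n)}$ together with $x_{k^+(n)-1}<n$ and $x_{k^-(n)+1}>n$. Since $\alpha(x_{k^\pm(n)})\le c+\varepsilon$, the definitions \eqref{def:ell} and \eqref{def:err} give
\[
0\le a_\varepsilon(\tau_n\alpha)\le x_{k^+(n)}-n\le x_{k^+(n)}-x_{k^+(n)-1},
\]
\[
0\le -A_\varepsilon(\tau_n\alpha)\le n-x_{k^-(n)}\le x_{k^-(n)+1}-x_{k^-(n)}.
\]

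Dividing the first chain by $x_{k^+(n)-1}<n$ and the second by $x_{k^-(n)}\le n$ (both strictly positive for $n$ large, because $x_k\to+\infty$ as $k\to+\infty$), I obtain
\[
\frac{a_\varepsilon(\tau_n\alpha)}{n}\le\frac{x_{k^+(n)}}{x_{k^+(n)-1}}-1,\qquad\frac{|A_\varepsilon(\tau_n\alpha)|}{n}\le\frac{x_{k^-(n)+1}}{x_{k^-(n)}}-1.
\]
Both right-hand sides vanish as $n\to+\infty$ by \eqref{assumption_afgl_0}, since $k^\pm(n)\to+\infty$, giving the announced limit.

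I do not anticipate any genuine obstacle. The whole argument is a careful sandwich, and the only point requiring attention is to make sure the denominators $x_{k^+(n)-1}$ and $x_{k^-(n)}$ are eventually positive, which is automatic from $k^\pm(n)\to+\infty$ combined with the monotonicity of $(x_k)$. The mirror-image construction for $n\to-\infty$ is entirely parallel.
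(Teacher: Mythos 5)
Your argument is correct and complete: sandwiching $n$ between the consecutive slow sites $x_{k^-(n)}\le n\le x_{k^+(n)}$, using $\alpha(x_k)\le c+\varepsilon$ for $|k|$ large from \eqref{assumption_afgl}, and converting the gap bound into $o(n)$ via the ratio condition \eqref{assumption_afgl_0} is exactly the natural route; the paper itself gives no proof here but only cites \cite[Lemma 4.4]{bmrs3}, whose argument is of the same type. Note in passing that, contrary to the phrasing in the text preceding the lemma (``as a consequence of \eqref{assumption_afgl}''), the ratio condition \eqref{assumption_afgl_0} is genuinely needed (slow sites at $\pm 2^k$ satisfy \eqref{assumption_afgl} but not the conclusion), and your proof correctly invokes it; the only points requiring care, positivity of the denominators and $k^\pm(n)\to\pm\infty$, are handled.
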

 Let
\be\label{first_lr}
l=A_\varepsilon(\overline{\alpha}),\quad r=a_\varepsilon(\overline{\alpha})
\ee
By condition (ii) of Definition \ref{def_typical} and \eqref{finite_a_min}, we have 
$l>-\infty$, but we cannot exclude $r=+\infty$. However, 
{if} $r<+\infty$, the following holds.
On the one hand, \eqref{def_lambda_inv} implies 
$\lambda^{\overline{\alpha},l,r}(x)\in[\overline{\alpha}(l),\overline{\alpha}(r)]$ 
for all $x\in[l,r]$. On the other hand, by definitions \eqref{def:ell}--\eqref{def:err},
 $\max(\overline{\alpha}(l),\overline{\alpha}(r))\leq c+\varepsilon$ 
 and $\min\{\overline\alpha(x):x\in(l,r)\}>c+\varepsilon$. Hence, if $r<+\infty$,
\[
\lambda^{\overline{\alpha},l,r}(x)<\overline{\alpha}(x),\quad\forall x\in(l,r)
\]
 Thus the measure $\mu^{\overline{\alpha},l,r}$ is well defined and invariant 
 for the finite process in $(l,r)$ under environment $\overline{\alpha}$. The 
 idea to prove Theorem \ref{th_strong_loc_eq_super} is to show that this measure 
 provides an upper bound for $\eta^{\alpha,N}_{Nt}$ as $N\to+\infty$. However,
  we need to slightly modify the definitions of $l$ and $r$, to avoid the case 
  where $r=+\infty$, {and} to take into account that $\alpha^N$ is not exactly 
  $\overline{\alpha}$. \\ \\
We therefore replace $a_\varepsilon(\overline{\alpha})$ by $a'_\varepsilon(\overline{\alpha})$ 
defined as follows. Instead of \eqref{first_lr}, we set 
\be\label{let_us_set}
l=A_\varepsilon(\overline{\alpha}), \quad
r=a_\varepsilon(\overline\alpha){\bf 1}_{\left\{a_\varepsilon(\overline\alpha)<+\infty\right\}}+
\lfloor\varepsilon^{-1}\rfloor{\bf 1}_{\left\{a_\varepsilon(\overline\alpha)=+\infty\right\}}
\ee
and
\be\label{def_right_reservoir}
r':=a'_\varepsilon(\overline{\alpha}):=\left\{
\ba{lll}
a_\varepsilon(\overline{\alpha}) & \mbox{if} & a_\varepsilon(\overline{\alpha})<+\infty\\
\min\{ z\in\Z\cap(l, r):\,\lambda^{\overline{\alpha},l, r}(z)\geq\overline{\alpha}(z)\}
& \mbox{if} & a_\varepsilon(\overline{\alpha})=+\infty
\ea
\right.
\ee
with the convention that the  minimum is defined to be $r$ if the 
above set is empty. 
Let  $\widetilde{\alpha}$ be the environment defined by
\be\label{def_tildealpha}
\widetilde{\alpha}(x)=\overline{\alpha}(x)\mbox{ if }x\neq r',\quad
\widetilde{\alpha}(r')= \overline{\alpha}(r'){\bf 1}_{\{r'=r\}}+\lambda^{\overline{\alpha},l,r}(r')
{\bf 1}_{\{r'<r\}}
\ee
\begin{lemma}\label{lemma_new_trunc}
Let ${\alpha'}^N_\varepsilon$ be the environment defined by 
\be\label{def_gendisn}
{\alpha'}^N_\varepsilon(x)=\alpha^N(x)\mbox{ for }x\neq r', \quad
{\alpha'}^N_\varepsilon(r')=
\widetilde{\alpha}(r')+\delta_N
\ee
Then:  (i)  there exists a sequence $(\delta_N)_{N\in\N\setminus\{0\}}$ such that $\lim_{N\to+\infty}\delta_N=0$ and the following holds: for $N$ large enough, one has
\be\label{domination_boundary}
{\alpha'}^N_\varepsilon(r')>\alpha^N(r'),
\ee
and the process with generator
$L^{{\alpha'}^N_\varepsilon,l,r'}$ has the unique invariant measure 
\be\label{def_muepsilon_2}
\mu^{N}(\varepsilon):=\mu^{{\alpha'}^N_\varepsilon,l,r'},
\ee
 (ii) 
Let $S$ be a finite subset of $\Z^d$, and $\zeta\in\N^S$. Then 
\be\label{forsmall}S\subset(A_\varepsilon(\overline{\alpha}),a'_\varepsilon(\overline{\alpha})),\quad\mbox{for small enough } \varepsilon>0\ee
Besides,
\be\label{limit_muepsilon_0}
\lim_{N\to+\infty}\mu^{N}(\varepsilon)(\zeta)=
\mu^{\widetilde{\alpha},A_\varepsilon(\overline{\alpha}),a'_\varepsilon(\overline{\alpha})}(\zeta)
\ee
\be\label{limit_muepsilon}
\lim_{\varepsilon\to 0}\mu^{\widetilde{\alpha},A_\varepsilon(\overline{\alpha}),a'_\varepsilon(\overline{\alpha})}(\zeta)=\mu^{\overline{\alpha}}_c(\zeta)
\ee
\end{lemma}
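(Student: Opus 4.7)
The overall plan is to combine three ingredients: pointwise convergence $\alpha^N(x) \to \overline{\alpha}(x)$, the explicit product form of the invariant measure given by Lemma \ref{lem-mu_c'} (whose marginal parameters depend continuously on the boundary rates $\kappa(l), \kappa(r)$), and the boundary estimates built into the definitions \eqref{def:ell}, \eqref{let_us_set}, \eqref{def_right_reservoir} of $A_\varepsilon$, $r$, and $a'_\varepsilon$.

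For part (i), I will set $\delta_N := (\alpha^N(r') - \widetilde{\alpha}(r'))^+ + 1/N$. This sequence is strictly positive and tends to $0$ because $\alpha^N(r') \to \overline{\alpha}(r') \leq \widetilde{\alpha}(r')$: the inequality is an equality when $r' = r = a_\varepsilon(\overline{\alpha})$ by \eqref{def_tildealpha}, and when $r' < r$ it reads $\overline{\alpha}(r') \leq \lambda^{\overline{\alpha},l,r}(r')$, which is exactly the minimality property defining $r'$ in \eqref{def_right_reservoir}. The bound \eqref{domination_boundary} is then automatic. To verify the recurrence criterion \eqref{cond_rec} for $\widetilde{\alpha}$, I use a unique-extension property of the interpolant \eqref{def_lambda_inv}: in the case $r' < r$, both $\lambda^{\widetilde{\alpha},l,r'}(\cdot)$ and $\lambda^{\overline{\alpha},l,r}(\cdot)$ solve the same two-point boundary problem on $[l,r']$, hence coincide, and the minimality of $r'$ yields $\lambda^{\widetilde{\alpha},l,r'}(x) < \widetilde{\alpha}(x) = \overline{\alpha}(x)$ strictly on $(l,r')$. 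In the case $r' = r$, monotonicity of the interpolant places it between its boundary values, both at most $c+\varepsilon$, while $\overline{\alpha}(x) > c+\varepsilon$ inside by the definitions of $l$ and $r$. Pointwise convergence $\alpha'^N_\varepsilon(x) \to \widetilde{\alpha}(x)$ combined with continuity of \eqref{def_lambda_inv} in its boundary arguments then propagates the strict inequality to $\alpha'^N_\varepsilon$ for $N$ large, and Lemma \ref{lem-mu_c'} produces $\mu^N(\varepsilon)$.

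For part (ii), inclusion \eqref{forsmall} follows from \eqref{limit_a_epsilon} together with $a'_\varepsilon(\overline{\alpha}) \to +\infty$; this is obvious when $a_\varepsilon(\overline{\alpha}) < +\infty$, and in the case $a_\varepsilon(\overline{\alpha}) = +\infty$ it follows from $r = \lfloor \varepsilon^{-1}\rfloor \to +\infty$ and the observation that $\overline{\alpha}(l) \to c$ pushes the interpolant $\lambda^{\overline{\alpha},l,r}$ below $\overline{\alpha}$ on a growing window. The first limit \eqref{limit_muepsilon_0} is then a routine continuity statement: $\mu^N(\varepsilon)$ is a product measure whose marginal at each fixed $x \in S$ has parameter $\lambda^{\alpha'^N_\varepsilon,l,r'}(x)/\alpha'^N_\varepsilon(x)$, which converges by continuity of \eqref{def_lambda_inv} to $\lambda^{\widetilde{\alpha},l,r'}(x)/\widetilde{\alpha}(x) \in [0,1)$. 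Analyticity of $\beta \mapsto \theta_\beta(n)$ on $[0,1)$ finishes the argument on the finite product over $S$.

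The genuine content is the second limit \eqref{limit_muepsilon}, which by the same continuity principle reduces to showing $\lambda^{\widetilde{\alpha},l,r'}(x) \to c$ as $\varepsilon \to 0$ for each fixed $x$. When $a_\varepsilon(\overline{\alpha}) < +\infty$, this is immediate since both boundary values $\widetilde{\alpha}(l), \widetilde{\alpha}(r')$ lie in $(c, c+\varepsilon]$ and the interpolant is squeezed between them. The delicate case, which I expect to be the main obstacle, is $a_\varepsilon(\overline{\alpha}) = +\infty$, where $\widetilde{\alpha}(r')$ is no longer directly bounded by $c+\varepsilon$. Here I rewrite \eqref{def_lambda_inv} as
\begin{displaymath}
\lambda^{\widetilde{\alpha},l,r'}(x) \;=\; \widetilde{\alpha}(l) + \bigl(\widetilde{\alpha}(r') - \widetilde{\alpha}(l)\bigr)\,\frac{(q/p)^{r'-x} - (q/p)^{r'-l}}{1 - (q/p)^{r'-l}},
\end{displaymath}
and exploit $p > q$ together with $r' - x \to +\infty$ (established above) to make the fraction vanish geometrically, leaving only $\widetilde{\alpha}(l) = \overline{\alpha}(l) \in (c, c+\varepsilon]$, which itself converges to $c$. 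The limiting parameter $c/\overline{\alpha}(x)$ lies in $[0,1)$ (using $\widetilde{\alpha}(x) = \overline{\alpha}(x)$ for $x \in S$ once $\varepsilon$ is small enough), so continuity of $\theta_\beta$ in $\beta$ and the product structure on the finite set $S$ identify the limit as $\mu^{\overline{\alpha}}_c(\zeta)$.
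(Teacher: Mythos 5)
Your proposal is correct and follows essentially the same route as the paper: the identity $\lambda^{\widetilde{\alpha},l,r'}=\lambda^{\overline{\alpha},l,r}$ (your ``unique-extension'' observation is exactly Lemma \ref{lemma_bartilde}), strict subcriticality on $(l,r')$ from the minimality defining $r'$, continuity of \eqref{def_lambda_inv} in the boundary rates to pass from $\widetilde{\alpha}$ to ${\alpha'}^N_\varepsilon$, and the explicit formula asymptotics (with $\overline{\alpha}(A_\varepsilon(\overline{\alpha}))\to c$ and $r'-x\to+\infty$) to get $\lambda\to c$ and hence \eqref{limit_muepsilon}. The only small patch needed is in part (i): when $a_\varepsilon(\overline{\alpha})=+\infty$ and the set in \eqref{def_right_reservoir} is empty, one has $r'=r=\lfloor\varepsilon^{-1}\rfloor$, so your ``both boundary values at most $c+\varepsilon$'' justification for the case $r'=r$ does not apply, but the conclusion is immediate there since emptiness of that set means precisely $\lambda^{\overline{\alpha},l,r}(z)<\overline{\alpha}(z)$ for all $z\in(l,r')$.
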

The proof of Lemma \ref{lemma_new_trunc} uses the following identity.
\begin{lemma}\label{lemma_bartilde}
It holds that
\be\label{bartilde}
\lambda^{\widetilde{\alpha},l,r'}(x)=\lambda^{\overline{\alpha},l,r}(x),\quad\forall x\in[l,r']\cap\Z
\ee
\end{lemma}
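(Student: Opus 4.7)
My plan is to exploit the fact that $x\mapsto \lambda^{\gendis,l,r}(x)$, as given by \eqref{def_lambda_inv}, is a function of the very specific form
\[
\lambda^{\gendis,l,r}(x)=A\left(\tfrac{q}{p}\right)^{-x}+B
\]
with constants $A,B$ depending only on $l$, $r$, $\gendis(l)$, $\gendis(r)$. The two functions $1$ and $(q/p)^{-x}$ form a linearly independent pair (since $p>q$), so a function of this form is uniquely determined by its values at any two distinct points. Moreover, as the verification after \eqref{def_lambda_inv} shows, the constants $A$ and $B$ are precisely those chosen so that $\lambda^{\gendis,l,r}(l)=\gendis(l)$ and $\lambda^{\gendis,l,r}(r)=\gendis(r)$. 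Hence $\lambda^{\gendis,l,r}$ can equivalently be characterized as the unique function of the above form taking prescribed values at the two endpoints.

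The proof will then split into the two cases appearing in \eqref{def_right_reservoir}. In the trivial case $r'=r$, the definition \eqref{def_tildealpha} gives $\widetilde\alpha(l)=\overline\alpha(l)$ and $\widetilde\alpha(r')=\overline\alpha(r)$, so the two functions $\lambda^{\widetilde\alpha,l,r'}$ and $\lambda^{\overline\alpha,l,r}$ are defined by identical data and agree pointwise. In the nontrivial case $r'<r$, we have again $\widetilde\alpha(l)=\overline\alpha(l)$, while \eqref{def_tildealpha} gives $\widetilde\alpha(r')=\lambda^{\overline\alpha,l,r}(r')$. The restriction of $\lambda^{\overline\alpha,l,r}$ to $[l,r']\cap\Z$ remains a function of the form $A(q/p)^{-x}+B$, and it takes the values $\overline\alpha(l)=\widetilde\alpha(l)$ at $x=l$ and $\lambda^{\overline\alpha,l,r}(r')=\widetilde\alpha(r')$ at $x=r'$. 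By the uniqueness statement just recalled applied with endpoints $(l,r')$ and boundary data $(\widetilde\alpha(l),\widetilde\alpha(r'))$, this restriction coincides with $\lambda^{\widetilde\alpha,l,r'}$ on $[l,r']\cap\Z$, which is \eqref{bartilde}.

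There is essentially no obstacle here; the content of the lemma is the algebraic fact that the linear family spanned by $1$ and $(q/p)^{-x}$ is closed under ``restriction and relabelling of boundary values''. The only point requiring a line of verification is the well-definedness of $\lambda^{\widetilde\alpha,l,r'}$, which reduces to checking $l<r'$ (immediate from \eqref{let_us_set} and \eqref{def_right_reservoir}, since $l\leq 0$ while $r'\geq \min((l,r)\cap\Z)\cdot {\bf 1}_{\{a_\varepsilon(\overline\alpha)=+\infty\}}+a_\varepsilon(\overline\alpha)\cdot{\bf 1}_{\{a_\varepsilon(\overline\alpha)<+\infty\}}>l$), together with the harmless fact that the formula \eqref{def_lambda_inv} makes sense for arbitrary real boundary values of $\gendis$ at $l$ and $r'$, not only those arising from an environment in $\bf B$.
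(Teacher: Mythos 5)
Your proof is correct and in substance the same as the paper's: the paper's argument for the nontrivial case $r'<r$ consists precisely in plugging $\widetilde{\alpha}(r')=\lambda^{\overline{\alpha},l,r}(r')$ from \eqref{def_tildealpha} into formula \eqref{def_lambda_inv} for $\lambda^{\widetilde{\alpha},l,r'}$ and checking the algebra, and your ``unique two-point interpolant'' formulation is just a clean way of packaging that same verification (the case $r'=r$ being immediate in both treatments). One small caveat: since $p\in(1/2,1]$ the totally asymmetric case $q=0$ is allowed, and then $(q/p)^{-x}$ is ill-defined, so you should phrase the two-parameter family with nonnegative exponents, say $x\mapsto A\left(\frac{q}{p}\right)^{r'-x}+B$ on $[l,r']$; the two-point determination still holds because the relevant determinant is $\left(\frac{q}{p}\right)^{r'-l}-1\neq 0$, and the restriction of $\lambda^{\overline{\alpha},l,r}$ lies in this family since $\left(\frac{q}{p}\right)^{r-x}=\left(\frac{q}{p}\right)^{r-r'}\left(\frac{q}{p}\right)^{r'-x}$, so with that cosmetic adjustment your argument covers $p=1$ as well.
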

\begin{proof}{Lemma}{lemma_bartilde}
If $r'=r$, this follows directly from \eqref{def_tildealpha}. 
If $r'<r$,  plug $\widetilde{\alpha}(r')=\lambda^{\overline{\alpha},l,r}(r')$ given by \eqref{def_lambda_inv} into $\lambda^{\widetilde{\alpha},l,r'}(x)$ defined again by \eqref{def_lambda_inv}.
\end{proof}
\mbox{}\\
\begin{proof}{Lemma}{lemma_new_trunc}
To prove (i) we must show that for large enough $N$, condition \eqref{cond_rec} is satisfied in our setting, that is 
\be\label{our_setting}
\lambda^{{\alpha'}^N_\varepsilon,l,r'}(x)<{\alpha'}^N_{\varepsilon}(x)=\alpha^N(x),\quad
\forall x\in(l,r')\cap\Z
\ee
Since
\be\label{while}
{\alpha'}^{N}_\varepsilon(l)=\widetilde{\alpha}(l),\quad
\lim_{N\to+\infty}\alpha^{'N}_\varepsilon(r')=\widetilde{\alpha}(r')
\ee
and $\lambda^{\kappa,l,r}$ defined in \eqref{def_lambda_inv} depends continuously on $(\kappa(l),\kappa(r))$, we have
\be\label{convergence_lambda}
\lim_{N\to+\infty}\lambda^{{\alpha'}^N_\varepsilon,l,r'}(x)=\lambda^{\widetilde{\alpha},l,r'}(x),\quad\forall
x\in\Z\cap(l,r')
\ee
Lemma \ref{lemma_bartilde} and definition \eqref{def_right_reservoir} of $r'$ imply
\begin{eqnarray}\label{construction_1}
\lambda^{\widetilde{\alpha},l,r'}(x)=\lambda^{\overline{\alpha},l,r'}(x)& < & \overline{\alpha}(x)=\widetilde{\alpha}(x),\quad\forall x\in\Z\cap(l,r')
\end{eqnarray}
Since ${\alpha}^N\to\overline{\alpha}$, we have 
\be\label{conv_env} 
{\alpha'}^N_\varepsilon(x)=\alpha^N(x)\to\overline{\alpha}(x)=\widetilde{\alpha}(x), \quad\forall x\in\Z\cap(l,r'),
\ee
Thus \eqref{convergence_lambda}, \eqref{construction_1} and \eqref{conv_env} imply that \eqref{our_setting} holds for all $N$ large enough. Next, since $\widetilde{\alpha}(r')\geq\overline{\alpha}(r')$ by \eqref{def_right_reservoir}--\eqref{def_tildealpha},  ${\alpha'}^N_\varepsilon(r')\to\widetilde{\alpha}(r')$ by \eqref{def_gendisn}, and ${\alpha}^N(r')\to\overline{\alpha}(r')$, we can find a sequence $(\delta_N)_{N\in\N}$ such that \eqref{domination_boundary} holds for the same values of $N$.\\ \\
We now turn to (ii). First, \eqref{forsmall} follows if we show
\be\label{weknow}\lim_{\varepsilon\to 0}A_\varepsilon(\overline{\alpha})=-\infty,\ee
\be\label{limit_aprime}\lim_{\varepsilon\to 0}a'_\varepsilon(\overline{\alpha})=+\infty\ee
The limit \eqref{weknow}  follows from \eqref{limit_a_epsilon}. To prove \eqref{limit_aprime}, we remark that
by \eqref{weknow} and \eqref{def_lambda_inv},
\[
\lim_{\varepsilon\to 0}\lambda^{\widetilde{\alpha},A_\varepsilon(\overline{\alpha}),\lfloor\varepsilon^{-1}\rfloor}(x)=c,\quad\forall x\in\Z
\]
which implies \eqref{limit_aprime}, since $\alpha(x)>c$ for all $x\in\Z$.
Next, \eqref{limit_muepsilon_0} follows from \eqref{def_inv_open}, \eqref{while}, \eqref{convergence_lambda} and \eqref{conv_env}. 
Finally, \eqref{limit_muepsilon} follows from \eqref{def_inv_open} if we show that
\be\label{enough}
\lim_{\varepsilon\to 0}\lambda^{\widetilde{\alpha},A_\varepsilon(\overline{\alpha}),a'_\varepsilon(\overline{\alpha})}(x)=c,\quad\forall x\in\Z
\ee
But \eqref{enough} follows from \eqref{def_lambda_inv} applied to $\kappa=\widetilde\alpha$, $l=A_\varepsilon(\overline\alpha)$, $r=a'_\varepsilon(\overline\alpha)$, since by definition \eqref{def:ell} of $A_\varepsilon$ and condition (ii) of Definition \ref{def_typical}, we have
\[
\lim_{\varepsilon\to 0}\overline\alpha[A_\varepsilon(\overline\alpha)]=c
\]
\end{proof}
\subsection{Proof of  \eqref{eq:loc_eq_super}  and \eqref{eq:loc_eq_super_bis}}\label{subsec:main_proof_super}
We are now ready to prove the upper bound in Theorem \ref{th_strong_loc_eq_super}. 
Define the initial configuration 
$\eta'^N_0\in\overline{\mathbf X}$  by 
\[{\eta}'^N_0(z)=\left\{
\ba{lll}
\tau_{x_N}\eta_0^N(z) & \mbox{if}  & z\not\in\{l,r'\}\\
+\infty & \mbox{if} & z\in\{l,r'\}
\ea
\right.
\]
We consider the process with generator
 $L^{\alpha'^N_\varepsilon}$  and initial configuration 
$\eta'^N_0$,  where $\alpha'^N_\varepsilon$ is defined by \eqref{def_gendisn}. 
We denote this process by $(\eta'^N_t)_{t\geq 0}$. We denote by $(\overline{\eta}^N_t)_{t\geq 0}$
the process with initial configuration $\eta'^N_0$ and generator 
 $L^{\alpha^N}$.
Recall from the first paragraph of Subsection \ref{subsec:jackson} 
that the restrictions of these two processes to the space interval
$\Z\cap
(l,r')
$
are Markov processes on 
$\N^{\Z\cap
(l,r')
}$ 
with respective generators 
$L^{\alpha'^N_{\varepsilon},l,r'}$ and
$L^{\alpha^N,l,r'}$. 
By the truncation procedure described in Subsection 
\ref{subsec:truncation}, the latter has a unique stationary measure 
$\mu^N(\varepsilon)$ defined in \eqref{def_muepsilon_2}.
 By attractiveness, since  $\tau_{x_N}\eta_0^N\leq\eta'^N_0$ , 
 we have  $\tau_{x_N}\eta^N_t\leq\overline{\eta}^N_t$  for every $t\geq 0$. 
On the other hand, by \eqref{domination_boundary}, the entrance rate   
$(\eta'^N_t)_{t\geq 0}$ has been increased with respect to the entrance rate $q\alpha^N(r')$ of $(\overline{\eta}^N_t)_{t\geq  0 }$, 
thus  (see \cite[Proposition 3.1]{bmrs2}) 
$\overline{\eta}^N_t\leq\eta'^N_t$, hence  $\tau_{x_N}\eta^N_t\leq\eta'^N_t$. This implies 
\be\label{compare_finite_infinite}
\tau_{x_N}\eta^N_t\leq\eta'^N_t
\ee 
We couple the process $(\eta'^N_t)_{t\geq 0}$ (via Harris construction) to its stationary version, 
namely the process that we denote by $({\xi'}^N_t)_{t\geq 0}$, which has 
the same generator but initial distribution 
$\mu^N(\varepsilon)$. 
Without loss of generality (since we need to prove an upper bound for $\eta'^N_t$), 
we may assume that $\eta'^N_0\geq\xi'^N_0$, by replacing $\eta'^N_0$ with $\eta'^N_0\vee \xi'^N_0$.
It follows from attractiveness that $\eta'^N_t\geq\xi'^N_t$ for all $t>0$.
We consider a process $(X_t^N)_{t\geq 0}$ defined as follows. Initially, 
we give some arbitrary finite value to $X^N_0$ (for instance $0$).
Then, the evolution of $X_t^N$ is defined as follows.
Recall the Harris construction described in Subsection \ref{subsec:Harris} 
from the Poisson
measure $\omega$ in \eqref{def_omega}. We can apply this construction to our 
processes in the finite interval $(l,r)\cap\Z$
by viewing them as processes on $\Z$ with infinitely many particles at sites $l$ and $r$.\\ \\
{\em First case.}
an $\eta'$ particle alone jumps to the right. This occurs at time $t>0$ if 
$(t,x,u,1)\in\omega$  for some $(x,u)\in((l,r']\cap\Z)\times(0,1)$, and
\be\label{only_eta}
\alpha(x)g[{\xi'}^N_{t-}(x)]<u\leq\alpha(x)g[{\eta'}^N_{t-}(x)]
\ee
At such times $t$, we set $X^N_t=X^N_{t-}+1$.\\ \\
{\em Second case.}
an $\eta'$ particle alone jumps to the left. This occurs at time $t>0$ if 
$(t,x,u,-1)\in\omega$ for some $(x,u)\in([l,r')\cap\Z)\times(0,1)$,
and \eqref{only_eta} holds.
At such times $t$, we set $X^N_t=X^N_{t-}-1$.\\ \\
We may interpret the above evolution rule by saying that whenever 
an $(\eta'-\xi')$ discrepancy moves to the right (resp. left), $X_t^N$
increases (resp. decreases) by one unit, including the case where 
this discrepancy leaves the interval $(l,r)$ (in which case it never reenters). It follows 
that, at any time $t>0$, the maximum possible value for the variation
 $X_t^N-X_0^N$ corresponds to the case where all discrepancies were initially 
 at site $l+1$ and have moved to site $r'$ by time $t$, in which case $X^N_.$ 
 has been incremented $r'-l$ times for each discrepancy. Therefore,
\be\label{bound_current_discrep}
X_t^N-X_0^N\leq (r'-l)
\sum_{z=l+1}^{r'-1}[\eta'^N_0(z)-\xi'^N_0(z)]
\ee
and, for any $T>0$,
\be\label{current_discrep}
\Exp(X^N_T)-\Exp(X^N_0)=\sum_{z=l+1}^{r'-1}
(p-q)\alpha(z)\Exp\int_0^T\left[
g(\eta'^N_t(z))-g(\xi'^N_t(z))
\right]dt
\ee
We define
\[
T^N_\infty:=\inf\left\{
t>0:\,\eta'^N_t=\xi'^N_t
\right\}
\]
to be the time at which all $(\eta'-\xi')$ discrepancies have left our finite system. 
Since $(\xi'^N_t)_{t\geq 0}$ is a stationary ergodic process, and the empty
configuration has some positive probability $p_N$ with respect to the 
stationary measure 
$\mu^N(\varepsilon)$, 
by the ergodic theorem, we have
\be\label{fraction_of_time}
\lim_{T\to+\infty}\Prob\left(
\left|
t\in[0,T]:\,\xi'^N_t(.)\equiv 0
\right|\geq \frac{1}{2}p_N T
\right)
=1
\ee
Let us denote the event in \eqref{fraction_of_time} by $E^N_T$. 
We insert the indicators of $E^N_T$ and $\{T^N_\infty>T\}$ in \eqref{current_discrep} 
and observe that on $E^N_T$, the integral in \eqref{current_discrep} is bounded 
below by  $\frac{1}{2}p_N\,c\,g(1)\,T$.  Using also 
\eqref{bound_current_discrep},
we arrive at 
\begin{eqnarray}
& & (r'-l)
\sum_{z=l+1}^{r'}[\eta'^N_0(z)-\xi'^N_0(z)]  \nonumber\\
& \geq & (p-q)cg(1)\frac{1}{2}p_N T\Prob(T^N_\infty>T,E^N_T)\label{we_arrive_at}
\end{eqnarray}
Since the initial configuration $\eta^N_0$ is assumed to have a density profile in $L^\infty$, 
there exists a sequence $(\varepsilon_N)_{N\geq 1}$ tending to $0$ as $N\to+\infty$, such that
the l.h.s. of \eqref{we_arrive_at} is bounded by $N\varepsilon_N$.
Besides, by  \eqref{limit_muepsilon_0},  $p_N$ remains bounded away 
from $0$.  
{}From this and \eqref{fraction_of_time}--\eqref{we_arrive_at}, 
setting $t_N=\sqrt{N}\vee (N\varepsilon_N)$,  we have
$\lim_{N\to+\infty}t_N=+\infty$, $\lim_{N\to+\infty}t_N/N=0$, and
\be\label{from_this}
\lim_{N\to+\infty}\Prob(T_\infty^N>t_N)=0
\ee
We have thus shown that with probability tending to one, all $(\eta'-\xi')$ discrepancies 
have left the box in time $o(N)$. 
Thus for any $t>0$, recalling \eqref{compare_finite_infinite}, 
\begin{eqnarray}
&&\lim_{N\to+\infty}\Prob\left(
\tau_{x_N}\eta^N_{Nt}(z)\leq\eta'^N_{Nt}(z)\leq\xi'^N_{Nt}(z),\right.\nonumber\\
&&\left.\qquad\forall z\in\Z\cap[A_\varepsilon(\overline{\alpha})+1,a'_\varepsilon(\overline{\alpha})-1]
\right)=1\label{finite_infinite}
\end{eqnarray}
By definition, the distribution of 
$\xi'^N_t$ is $\mu^N(\varepsilon)$.  \\ \\
We can now conclude the proof of \eqref{eq:loc_eq_super}.
It is enough to consider a bounded nondecreasing test function $\psi$.
Let $S$ be a finite subset of $\Z$ containing the support of $\psi$. 
By \eqref{finite_infinite},
we have that for $\varepsilon>0$ small enough, 
\begin{eqnarray*}
\limsup_{N\to+\infty}\Exp \psi\left(\tau_{x_N}\eta^N_{Nt}\right) & \leq & \limsup_{N\to+\infty}\Exp \psi\left(\xi'^N_{Nt}\right)\\
& = & \limsup_{N\to+\infty}\int_{\mathbf X} \psi(\xi)d\mu^{N}(\varepsilon)(\xi)
\end{eqnarray*}
Now we  let $\varepsilon\to 0$,  and apply 
 \eqref{limit_muepsilon_0}--\eqref{limit_muepsilon} 
to the above equality. This yields
\[
\limsup_{N\to+\infty}\Exp \psi\left(\tau_{x_N}\eta^N_{Nt}\right)
\leq \int_{\mathbf X}\psi(\eta)d\mu_c^{\overline{\alpha}}(\eta)
\]
 We finally  prove \eqref{eq:loc_eq_super_bis}. To this end, 
we use \eqref{coupling_2} and argue similarly to the proof of 
Theorem \ref{th_strong_loc_eq}. Since $\rho_*(t,u)\geq\rho_c$, 
we can find $\Delta$ small enough and $\rho_0<\rho_1$, 
where $\rho_1$ is defined by \eqref{def:rho1}, such that $\rho_0$ is 
arbitrarily close to $\rho_c$.
We couple the three processes $\xi^{\alpha,\rho_0}_.$, $\eta^{\alpha,N}_.$ 
and $\xi'^{N}_.$. Let $G_N$
denote the event in \eqref{finite_infinite} and $F_N$ the one in 
\eqref{coupling_2}. Thus on $F_N\cap G_N$, we
have $\xi^{\alpha,\rho_0}_{Nt}\leq\tau_{x_N}\eta^{\alpha,N}_{Nt}\leq \xi'^{\alpha}_{Nt}$. 
Similarly to \eqref{using_lipsi}, we can write
\begin{eqnarray}
&&\!\!\!\!\!\!\!\!\!\!\!\!\!\!\!\!\!\!\left|
\Exp\psi\left(\tau_{x_N}\eta^{\alpha,N}_{Nt}\right)
-\int_{\mathbf X}\psi(\eta)d\mu^{\tau_{x_N}\alpha,\rho_c}(\eta)
\right|\nonumber\\ &\leq  & \left|
\Exp\psi\left(\tau_{x_N}\eta^{\alpha,N}_{Nt}\right)-\Exp\psi\left(\tau_{x_N}\xi^{\alpha,\rho_0}_{Nt}\right)
\right|\nonumber\\
&&+ \left|
\Exp\psi\left(\tau_{x_N}\xi^{\alpha,\rho_0}_{Nt}\right)
-\int_{\mathbf X}\psi(\eta)d\mu^{\tau_{x_N}\alpha,\rho_c}(\eta)
\right|  \label{using_lipsi_again}
\end{eqnarray}
Proceeding as in \eqref{inequality_follows}--\eqref{forthelast}, we  bound 
the second term on the r.h.s. of \eqref{using_lipsi_again} by 
\begin{eqnarray}
&&\!\!\!\!\!\!\!\!\!\!\!\!\!\!\!\!\!\! 2||\psi||_\infty\Exp
\sum_{z\in S}\left[\xi^{\alpha,\rho_c}_{0}( x_N+ z)-\xi^{\alpha,\rho_0}_0( x_N+ z)
\right]\nonumber\\
& = &
2||\psi||_\infty
\sum_{z\in S}\left\{
R\left(
\frac{c}{\alpha(x_N+z)}
\right)-R\left(
\frac{\overline{R}^{-1}(\rho_0)}{\alpha(x_N+z)}
\right)
\right\}\label{proceeding_as_in}
\end{eqnarray}
By condition (i) of Definition \ref{def_typical}, $\overline{\alpha}(z)<c$ 
for every $z\in\Z$. Thus the quantity in \eqref{proceeding_as_in} converges as $N\to+\infty$ to
\be\label{proceeding_converges}
2||\psi||_\infty
\sum_{z\in S}\left\{
R\left(
\frac{c}{\overline{\alpha}(z)}
\right)-R\left(
\frac{\overline{R}^{-1}(\rho_0)}{\overline{\alpha}(z)}
\right)
\right\}
\ee
and the latter vanishes as $\rho_0\to\rho_c$.\\ \\
For the first term on the r.h.s. of \eqref{using_lipsi_again}, 
we insert the indicator of $F_N\cap G_N$.
Since on this event we have $\tau_{x_N}\xi^{\alpha,\rho_0}\tau_{x_N}\leq\eta^{N}_{Nt}\leq\xi'^{N}_{Nt}$,
 using \eqref{lipsi}, we obtain the upper bound (recalling that $\xi'^{N}_.$ is stationary 
 with distribution given by \eqref{def_muepsilon_2})
\begin{eqnarray}
&&\!\!\!\!\!\!\!\!\!\!\!\!\!\!\!\!\!\! 2||\psi||_\infty\sum_{z\in S}\left[
\Exp\xi'^{N}_{Nt}(z)-\Exp\xi^{\alpha,\rho_0}(x_N+z)
\right]\nonumber\\
& = & 2||\psi||_\infty\sum_{z\in S}\left[
\int_{\mathbf X}\xi(x)d\mu^N(\varepsilon)(\xi)-R\left(
\frac{\overline{R}^{-1}(\rho_0)}{\alpha(x_N+z)}
\right)
\right]
\label{upper_bound_as_before}
\end{eqnarray}
Now we let $N\to+\infty$ and $\varepsilon\to 0$ in \eqref{upper_bound_as_before}, 
and use \eqref{limit_muepsilon_0}--\eqref{limit_muepsilon}. The quantity 
\eqref{upper_bound_as_before} then converges to
\eqref{proceeding_converges}.
This concludes the proof of \eqref{eq:loc_eq_super_bis}. 
\section{Proof of Theorem \ref{prop_strong_loc_eq_super_cesaro}}\label{sec:cesaro}
We  define the shifted environment $\alpha^N$ as in \eqref{shift_env}. 
As in the proof of Theorem \ref{th_strong_loc_eq_super}, we consider 
a subsequence of $(x_N)_{N\in\N}$ along which $\alpha^N$ has a limit 
$\overline{\alpha}$. However, we no longer make any assumption on this 
limiting $\overline{\alpha}$. It will be enough to establish \eqref{eq:loc_eq_super_cesaro} 
along any such subsequence, that we shall still denote 
 by $(x_N)$ for notational simplicity. Let us set
\be\label{def_nutdelta}
\nu^{N,\delta}_t(.):=\frac{1}{N\delta}  \int^{Nt}_{N(t-\delta)} \Prob(\tau_{x_{N}} \eta^{\alpha,N}_{s}\in \cdot ) ds
\ee
so that the expectation in \eqref{eq:loc_eq_super_cesaro} can be rewritten as
$\int \psi(\eta)d\nu^{N, \delta }_t(\eta)$. Our problem is thus to show that 
in the double limit $N\to+\infty$ followed by $\delta\to 0$, $\nu_t^{N,\delta}$
converges weakly to $ \mu^{\overline{\alpha}}_c$. 
To this end, the main step is to show that $\nu$ satisfies certain properties which we will show to characterize $\mu^{\overline{\alpha}}_c$:
\begin{proposition}\label{prop:inv_env}
Any subsequential weak limit $\nu$ of the sequence $(\nu^{N,\delta}_t)_{N\in\N}$ 
as $N \to+ \infty$ satisfies the following properties:\\ \\
(i) $\nu$ is an invariant measure for the Markov process 
with generator $L^{\overline{\alpha}}$ defined by \eqref{generator}, 
that is the zero-range process in environment $\overline{\alpha}$.\\ \\
(ii) $\nu\geq\mu^{\overline{\alpha}}_c$.\\ \\
(iii) $\dsp \overline{\alpha}(x)\int_{\overline{\mathbf X}}g(\eta(x))d\nu(\eta)=c$ for every $x\in\Z$.
\end{proposition}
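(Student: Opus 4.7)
My plan is to verify the three properties in sequence.

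For (i), I would apply the standard Krylov--Bogolyubov averaging argument. The shifted process $(\tau_{x_N}\eta^{\alpha,N}_s)_{s\ge 0}$ is Markov with generator $L^{\alpha^N}$. For any bounded cylinder test function $f$, Dynkin's formula gives
\[
\int L^{\alpha^N}f\,d\nu^{N,\delta}_t=\frac{1}{N\delta}\Bigl[\Exp f(\tau_{x_N}\eta^{\alpha,N}_{Nt})-\Exp f(\tau_{x_N}\eta^{\alpha,N}_{N(t-\delta)})\Bigr]=O\bigl(1/(N\delta)\bigr).
\]
Since $L^\alpha f$ depends on only finitely many coordinates of $\alpha$ and $\alpha^N\to\overline\alpha$ pointwise, $L^{\alpha^N}f\to L^{\overline\alpha}f$ uniformly. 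Passing to the limit along the subsequence therefore yields $\int L^{\overline\alpha}f\,d\nu=0$, i.e., invariance.

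For (ii), I would invoke \eqref{coupling_2} of Proposition \ref{prop_nomore_disc}. Since $\rho_*(t,u)\ge\rho_c$, for any $\rho_0\in\mathcal R$ with $\rho_0<\rho_c$ one can choose $\Delta$ small enough that the condition \eqref{rho0_rho3} holds near $(t,u)$. Then \eqref{coupling_2} gives, with probability tending to $1$, $\tau_{x_N}\eta^{\alpha,N}_{Ns}\ge\tau_{x_N}\xi^{\alpha,\rho_0}_{Ns}$ on any fixed finite window for $s$ close to $t$. Since $\tau_{x_N}\xi^{\alpha,\rho_0}_{Ns}$ has stationary marginal $\mu^{\alpha^N,\rho_0}$, which converges weakly to $\mu^{\overline\alpha,\rho_0}$, time-averaging and passage to the limit yield $\nu\ge\mu^{\overline\alpha,\rho_0}$ stochastically. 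Letting $\rho_0\uparrow\rho_c$ and using the weak continuity of $\beta\mapsto\mu^{\overline\alpha}_\beta$ at $\beta=c$ (see \eqref{stoch_inc}) gives $\nu\ge\mu^{\overline\alpha}_c$.

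For (iii), set $\phi(x):=\overline\alpha(x)\int g(\eta(x))d\nu(\eta)$. Testing the invariance from (i) against a truncation of the (unbounded) function $\eta\mapsto\eta(x)$ and letting the truncation go to infinity (using boundedness of $g$) produces the discrete harmonic equation $p\phi(x-1)+q\phi(x+1)=\phi(x)$. As $\phi\le 1$ is bounded and $p>q$, the only admissible solution is a constant $\phi\equiv\bar c$. The bound $\bar c\ge c$ follows from (ii) together with the identity $\int g\,d\theta_\beta=\beta$ evaluated at $\beta=c/\overline\alpha(x)$. For the reverse bound, I would use source comparison: by Assumption \ref{assumption_dense}, for each $N$ I can choose a defect site $w_N$ of $\alpha$ with $|w_N-x_N|=o(N)$ and $\alpha(w_N)\to c$ (combining the asymptotic gap $z_{n+1}-z_n=o(z_n)$ with $\alpha(z_n)\to c$). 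Corollary \ref{corollary_consequence} together with Proposition \ref{current_source} then give
\[
\limsup_{N\to\infty}(N\delta)^{-1}\Exp\Gamma^\alpha_{w_N}\bigl(N(t-\delta),Nt,\eta^{\alpha,N}_{N(t-\delta)}\bigr)\le(p-q)c.
\]
On the other hand, rewriting the expected current at $x_N$ as the time integral of its instantaneous rate shows $(N\delta)^{-1}\Exp\Gamma^\alpha_{x_N}(\cdot)\to p\phi(0)-q\phi(1)=(p-q)\bar c$. The conservation identity \eqref{difference_currents} expresses the difference $\Gamma^\alpha_{w_N}-\Gamma^\alpha_{x_N}$ as a telescoped mass variation on $(x_N,w_N]$, which I would show is $o(N\delta)$ in expectation since $|w_N-x_N|=o(N)$ and the hydrodynamic density is in $L^\infty$. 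Combining these yields $\bar c\le c$, completing (iii).

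The main technical obstacle is controlling $\Exp\sum_{z\in(x_N,w_N]}\eta^{\alpha,N}_{Ns}(z)$ uniformly in $s\in[N(t-\delta),Nt]$, needed to bound the mass difference in the last step. This requires combining an attractive-coupling bound with the bounded initial density profile, together with a careful application of Assumption \ref{assumption_dense} to guarantee that the defect site $w_N$ can be chosen so that $|w_N-x_N|=o(N)$ and $\alpha(w_N)\to c$ hold simultaneously.
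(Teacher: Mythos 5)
Your parts (i) and (ii) are essentially sound. For (i) you take a generator/Krylov--Bogolyubov route (Dynkin's formula plus $L^{\alpha^N}f\to L^{\overline\alpha}f$), whereas the paper works at the semigroup level: it proves the uniform convergence $P_t^{\alpha^N}f\to P_t^{\overline\alpha}f$ by a coupling estimate and combines it with the Ces\`aro bound $\bigl|\int P_t^{\alpha^N}f\,d\nu^{N,\delta}_t-\int f\,d\nu^{N,\delta}_t\bigr|\le 2t\|f\|_\infty/N$. Your route is fine in principle but silently requires that $\int L^{\overline\alpha}f\,d\nu=0$ on cylinder functions characterizes invariance, i.e.\ that cylinder functions form a core for the generator on $\overline{\mathbf X}$; the paper's semigroup argument is designed precisely to avoid invoking that. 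Your (ii) is, in substance, the same mechanism the paper uses (the lower-bound half of the proof of \eqref{eq:loc_eq_super_bis}, via \eqref{coupling_2} with $\rho_0\uparrow\rho_c$ and weak continuity of $\beta\mapsto\mu^{\overline\alpha}_\beta$), and you are right that this half does not need typicality.

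Part (iii) has a genuine gap. Your plan hinges on the discrete harmonic equation $p\phi(x-1)+q\phi(x+1)=\phi(x)$ obtained by testing invariance against truncations of $\eta\mapsto\eta(x)$. Carrying out that truncation with $f_K(\eta)=\eta(x)\wedge K$ and letting $K\to\infty$ only yields the identity with every term multiplied by ${\bf 1}_{\{\eta(x)<+\infty\}}$: the outgoing term is $\overline\alpha(x)\int g(\eta(x)){\bf 1}_{\{\eta(x)\le K\}}d\nu$ and the incoming terms carry ${\bf 1}_{\{\eta(x)<K\}}$, so the clean equation for $\phi(x)=\overline\alpha(x)\int g(\eta(x))d\nu$ requires $\nu(\eta(x)=+\infty)=0$. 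In Theorem \ref{prop_strong_loc_eq_super_cesaro} no typicality is assumed, so $\overline\alpha$ may attain the value $c$, and then the target measure $\mu^{\overline\alpha}_c$ itself (hence plausibly $\nu$) puts full mass on $\{\eta(x)=+\infty\}$ at such sites; you cannot rule this out a priori, so constancy of $\phi$, and with it the identification $(N\delta)^{-1}\Exp\Gamma^\alpha_{x_N}\to(p-q)\bar c$, is not established. The paper's Lemma \ref{lemswitch} is exactly the device that removes this need: by decomposing emissions from $x$ into those whose random-walk skeleton never returns and those that do, it shows the current itself is asymptotic to $(p-q)\kappa^N(x)\int_0^{t_N}g(\zeta^N_s(x))ds$, so no harmonicity of $\phi$ is required before applying the slow-site upper bound of Lemma \ref{lemflu}. (Your plan could be repaired without Lemma \ref{lemswitch} by proving the bond-wise bound $p\phi(x)-q\phi(x+1)\le(p-q)c$ at every $x$ and combining it with $\phi\ge c$ from (ii): then $\phi(x+1)-c\ge(p/q)(\phi(x)-c)$ forces $\phi\equiv c$ by boundedness; but that is not what you wrote.) Two smaller points: the uniform-in-time control of $\Exp\sum_{z\in(x_N,w_N]}\eta^{\alpha,N}_{Ns}(z)$ you flag as the main obstacle is not needed, since \eqref{difference_currents} expresses the current difference through the occupation numbers at a single endpoint time, handled by the hydrodynamic limit as in the paper's proof of Lemma \ref{lemflu}; and passing from convergence in probability to convergence of expectations of the normalized current requires a uniform integrability argument (the paper bounds currents by Poisson processes), which your sketch omits.
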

We now derive Theorem \ref{prop_strong_loc_eq_super_cesaro} 
from Proposition \ref{prop:inv_env}, which will be proved at the end of this section.  
Since we also have 
\[
\dsp \overline{\alpha}(x)\int_{\overline{\mathbf X}}g(\eta(x))d\mu^{\overline{\alpha}}_c(\eta)=c,
\]
if $g$ were strictly increasing, properties (ii) and (iii) of Proposition \ref{prop:inv_env} 
would directly imply $\nu=\mu^{\overline{\alpha}}_c$ using a coupling argument. In the general 
case where $g$ is only assumed nondecreasing, to make this coupling still effective, 
we need an additional lemma.\\ \\
For $x\in\Z$ and $t\geq 0$, let $\varphi_{x,0}$ denote the function defined on $\bf X$ by $\varphi_{x,0}(\eta)=g(\eta(x))$, and $\varphi^{\overline{\alpha}}_{x,t}=P_t^{\overline{\alpha}}\varphi_{x,0}$, that is
\be\label{def_varphixt}
\varphi_{x,t}^{\overline{\alpha}}(\eta):=\Exp_{\eta}g\left(\eta^{\overline{\alpha}}_t(x)\right)
\ee
where $\left(\eta^{\overline{\alpha}}_t\right)_{t\geq 0}$ denotes a process 
with generator $L^{\overline{\alpha}}$, cf. \eqref{generator}, and index $\eta$ 
means that this process has initial configuration $\eta$.
The function $\varphi^{\overline{\alpha}}_{x,t}$ is nondecreasing because the semigroup 
$(P_t^{\overline{\alpha}})_{t\geq 0}$ is monotone, but in fact we have 
the following stronger statement.
\begin{lemma}\label{lemma_slight_problem}
Let $(\eta,\xi)\in\overline{\mathbf X}^2$ and $x\in\Z$ such that $\eta\leq\xi$ and $\eta(x)<\xi(x)$.
Then, for every $t>0$, $\varphi_{x,t}^{\overline{\alpha}}(\eta)<\varphi^{\overline{\alpha}}_{x,t}(\xi)$.
\end{lemma}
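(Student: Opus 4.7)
The plan is to work pathwise on the Harris system of Subsection \ref{subsec:Harris}, coupling the two processes $(\eta^{\overline{\alpha}}_s)_{s\geq 0}$ and $(\xi^{\overline{\alpha}}_s)_{s\geq 0}$ driven by the same Poisson measure $\omega$ and starting respectively from $\eta$ and $\xi$. Attractiveness of this coupling yields $\eta^{\overline{\alpha}}_s\leq\xi^{\overline{\alpha}}_s$ for every $s\geq 0$, and in particular $g(\eta^{\overline{\alpha}}_t(x))\leq g(\xi^{\overline{\alpha}}_t(x))$ almost surely. It therefore suffices to exhibit a single event $E$ of strictly positive probability on which this inequality is strict.

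Set $k:=\eta(x)\in\N$, so that $\xi(x)-k\geq 1$ (with the convention $(+\infty)-k=+\infty$ when $\xi(x)=+\infty$). I will design $E$ so as to force $\eta^{\overline{\alpha}}_t(x)=0$ while $\xi^{\overline{\alpha}}_t(x)\geq 1$, which by $g(0)=0<g(1)$ immediately gives the strict inequality on $E$. Fix $s_0\in(0,t)$ and define $E$ as the intersection of the following three events, each concerning a disjoint region of $\omega$: (a) in $(0,s_0]\times\{x\}\times(0,1)\times\{-1,+1\}$, $\omega$ has exactly $k$ atoms and each of them has $z=+1$ and $u\leq\overline{\alpha}(x)g(1)$; (b) $\omega$ has no atom in $(s_0,t]\times\{x\}\times(0,1)\times\{-1,+1\}$; (c) $\omega$ has no atom in $(0,t]\times\{x-1,x+1\}\times(0,1)\times\{-1,+1\}$. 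For each atom listed in (a) occurring at some time $s$, the bound $\eta^{\overline{\alpha}}_{s-}(x)\geq 1$ (which holds by induction as long as not all $k$ atoms have been processed) gives $u\leq\overline{\alpha}(x)g(1)\leq\overline{\alpha}(x)g(\eta^{\overline{\alpha}}_{s-}(x))\leq\overline{\alpha}(x)g(\xi^{\overline{\alpha}}_{s-}(x))$, so rule (J1) of the coupling fires and one $\eta$- and one $\xi$-particle jump simultaneously from $x$ to $x+1$. Iterating over the $k$ atoms drops $\eta^{\overline{\alpha}}(x)$ from $k$ to $0$ and $\xi^{\overline{\alpha}}(x)$ from $\xi(x)$ to $\xi(x)-k\geq 1$; constraints (b) and (c) then forbid any further jump affecting the occupancy at $x$ up to time $t$, so these terminal values persist.

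The probability of $E$ factorizes over the three disjoint Poisson regions and is strictly positive as soon as $\overline{\alpha}(x)>0$ (recall that $g(1)>0$ from \eqref{properties_g} and $p>0$). The only genuine obstacle is therefore the degenerate case $\overline{\alpha}(x)=0$, in which (a) cannot trigger any jump and the construction collapses. This case is however irrelevant in the intended applications in Section \ref{sec:cesaro}: Proposition \ref{prop:inv_env}(iii) reads $\overline{\alpha}(x)\int g(\eta(x))d\nu(\eta)=c$, which forces $\overline{\alpha}(x)>0$ at every $x$ in the nontrivial regime $c>0$, and in the regime $c=0$ one has $\mu_c^{\overline{\alpha}}=\delta_{\mathbf 0}$ and the coupling argument in which the lemma is invoked trivializes.
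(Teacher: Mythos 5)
Your proposal is correct and follows essentially the same route as the paper's proof: couple both processes through the same Harris system, use attractiveness together with monotonicity of $g$ to reduce to exhibiting strict inequality with positive probability, and construct a positive-probability event on which $\eta(x)$ coupled departures from $x$ (with no arrivals) leave $\eta^{\overline{\alpha}}_t(x)=0$ while $\xi^{\overline{\alpha}}_t(x)\geq 1$, so that $g(0)=0<g(1)$ concludes. Your caveat about the degenerate case $\overline{\alpha}(x)=0$ is a fair observation, but it applies equally to the paper's own argument, which also implicitly needs $\overline{\alpha}(x)>0$ for the required coupled jumps to have positive rate.
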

\begin{proof}{Lemma}{lemma_slight_problem}
By attractiveness, we have $\eta^{\overline{\alpha}}_t(x)\leq\xi^{\overline{\alpha}}_t(x)$, 
and since $g$ is nondecreasing,  
$g\left(\eta^{\overline{\alpha}}_t(x)\right)\leq g\left(\xi^{\overline{\alpha}}_t(x)\right)$.
By \eqref{def_varphixt}, it is thus enough to show that 
\be\label{order_g}
g\left(\eta^{\overline{\alpha}}_t(x)\right)< g\left(\xi^{\overline{\alpha}}_t(x)\right)
\ee
with positive probability. To this end, we can use the graphical construction to show
that, given $\eta$ and $\xi$, one can find an event of positive probability in the 
Poisson space $(\Omega,\mathcal F,\Prob)$ (see Subsection \ref{subsec:Harris}) on which 
\be\label{onwhich}
\eta^{\overline{\alpha}}_t(x)=0<\xi^{\overline{\alpha}}_t(x), 
\ee
which implies \eqref{order_g}, because $g(0)=0<g(1)$ (see \eqref{properties_g}).
Indeed, to achieve \eqref{onwhich}, it is enough to require that up to time $t$, 
no $\eta$ or $\xi$ particle 
has ever jumped to $x$ from any other site, and that a number $\eta(x)$ of coupled 
$(\eta,\xi)$ jumps have occurred, so that there are no more $\eta$-particles left 
at site $x$, but at least one $\xi$-particle.
\end{proof}
\mbox{}\\ \\
\begin{proof}{Theorem}{prop_strong_loc_eq_super_cesaro}
We must show that any subsequential weak limit $\nu$ (as defined in Proposition \ref{prop:inv_env}) of $(\nu^{N,\delta}_t)_{N\geq 1}$ is equal to $\mu^{\overline\alpha}_c$.
By (ii) of Proposition \ref{prop:inv_env}, since $g$ is nondecreasing,
\be\label{fluxoverc}
\overline\alpha(x)\int_{\overline{\mathbf X}}g(\eta(x))d\nu(\eta)\geq 
\overline\alpha(x)\int_{\overline{\mathbf X}}g(\eta(x))d\mu^{\overline{\alpha}}_c(\eta)= c,\quad\forall x\in\Z
\ee
It follows from 
(iii) of Proposition \ref{prop:inv_env} that
\be\label{fluxequalc}
\overline\alpha(x)\int_{\overline{\mathbf X}}g(\eta(x))d\nu(\eta)= c,\quad\forall x\in\Z
\ee
Since $\nu$ and $\mu^{\overline{\alpha}}_c$ are invariant measures for $L^{\overline{\alpha}}$ (see Proposition \ref{prop:inv_env}), for any $t>0$, by \eqref{fluxoverc} (recall \eqref{def_varphixt}),
\be\label{fluxequalc_bis}
\int_{\overline{\mathbf X}}g(\eta(x))d\nu(\eta)=\int_{\overline{\mathbf X}}\varphi^{\overline{\alpha}}_{x,t}(\eta)d\nu(\eta)=
\int_{\overline{\mathbf X}}\varphi^{\overline{\alpha}}_{x,t}(\eta)d\mu^{\overline{\alpha}}_c(\eta)=
\int_{\overline{\mathbf X}}g(\eta(x))d\mu^{\overline{\alpha}}_c(\eta)
\ee
By (ii) of Proposition \ref{prop:inv_env} and Strassen's theorem, there exists a coupling measure $\pi(d\eta,d\xi)$ with first marginal $\nu(d\eta)$ 
and second marginal $\mu^{\overline{\alpha}}_c(d\xi)$, such that
\be\label{coupling_munu}
\pi\left(
\left\{
(\eta,\xi)\in\overline{\mathbf X}^2:\,\eta\geq\xi
\right\}
\right)=1
\ee
 We also have, by \eqref{fluxoverc_bis}, for every $x\in\Z$,
\[
\int_{\overline{\mathbf X}^2}\varphi^{\overline{\alpha}}_{x,t}(\eta)d\pi(\eta,\xi)=\int_{\overline{\mathbf X}^2}\varphi^{\overline{\alpha}}_{x,t}(\xi)d\pi(\eta,\xi)
\]
The above equality, combined with \eqref{coupling_munu}, implies (since $\varphi^{\overline{\alpha}}_{x,t}$ is nondecreasing)
\be\label{above_implies}
\pi\left(
\left\{
(\eta,\xi)\in\overline{\mathbf X}^2:\,\varphi_{x,t}^{\overline\alpha}(\eta)=\varphi_{x,t}^{\overline\alpha}(\xi)
\right\}
\right)=1
\ee
In view of \eqref{coupling_munu} and Lemma \ref{lemma_slight_problem}, 
since \eqref{above_implies} holds for every $x\in\Z$, we obtain that 
\[
\pi\left(
\left\{
(\eta,\xi)\in\overline{\mathbf X}^2:\,\eta=\xi
\right\}
\right)=1
\]
which concludes the proof.
\end{proof}
\mbox{}\\ \\
We now turn to the proof of Proposition \ref{prop:inv_env}.
\mbox{}\\ \\
 The next two lemmas, which analyze the current for a zero-range process 
through a fixed site $x$ on a large time interval, are the main steps 
for the proof of statement (ii) in Proposition \ref{prop:inv_env}.
 The first lemma enables us to replace the current by a suitable compensator. 
\begin{lemma}
\label{lemswitch}
Given a family of zero range processes $(\zeta^N_s )_{s \geq 0 } $ 
for $N \in \Z_+$ with environments $\kappa^N\in{\bf A}$, suppose that $(t_N)_{N\in\N}$ is a sequence going 
to $+\infty$ such that, for all $x \in \Z$, 
the following convergences hold in probability:
\be\label{assumption_lemswitch}
\lim_{N\to+\infty}t_N^{-1}\zeta^N_0(x) = \ 0, 
\lim_{N\to+\infty}t_N^{-1}\zeta^N_{t_N}(x) = \ 0
\ee
Then for every $x\in\Z$,
\be\label{switch}
\lim_{N\to+\infty}\frac{1}{t_N}\left(\Gamma^{\kappa^N}_x (t_N, \zeta^N_0) - 
  \kappa^N(x)   \int _0 ^ {t_N} (p-q)g( \zeta^N_s(x))ds \right) 
  = 0
\ee
in probability.
\end{lemma}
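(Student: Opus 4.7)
The strategy I would follow is to split the current into a Poisson-process martingale plus a ``local'' compensator at site $x$, and then to close the proof via a rigidity argument for bounded harmonic functions on $\Z$ of the biased nearest-neighbour walk. Throughout I set
\[
G^N_y(t):=\int_0^t \kappa^N(y)\,g(\zeta^N_s(y))\,ds, \qquad \varphi^N_y:=G^N_y(t_N)/t_N\in[0,1].
\]

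First I would apply the standard counting-process martingale decomposition to the current, which gives
\[
\Gamma^{\kappa^N}_x(t_N,\zeta^N_0) = M^{\Gamma}_N(t_N) + p\,G^N_x(t_N) - q\,G^N_{x+1}(t_N),
\]
where $M^{\Gamma}_N$ is a martingale whose predictable quadratic variation is bounded by $t_N$ (because $\kappa^N,g\le 1$), so that $M^{\Gamma}_N(t_N)/t_N\to 0$ in $L^2$. Subtracting $(p-q)\,G^N_x(t_N)$ from both sides, the lemma is reduced to proving that $\varphi^N_x-\varphi^N_{x+1}\to 0$ in probability.

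Next I would apply Dynkin's formula to the coordinate function $\eta\mapsto\eta(x)$. Since
\[
L^{\kappa^N}\eta(x)=p\,\kappa^N(x-1)g(\eta(x-1))+q\,\kappa^N(x+1)g(\eta(x+1))-\kappa^N(x)g(\eta(x)),
\]
this produces another martingale $\widetilde M^x_N$ of quadratic variation $O(t_N)$, together with the identity
\[
G^N_x(t_N) - p\,G^N_{x-1}(t_N) - q\,G^N_{x+1}(t_N) = \widetilde M^x_N(t_N)+\zeta^N_0(x)-\zeta^N_{t_N}(x).
\]
By the quadratic-variation bound and hypothesis \eqref{assumption_lemswitch}, the right-hand side is $o(t_N)$ in probability, so dividing by $t_N$ we obtain, for every $y\in\Z$,
\[
\varphi^N_y - p\,\varphi^N_{y-1} - q\,\varphi^N_{y+1}\longrightarrow 0 \quad\mbox{in probability.}
\]

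The hard part will be this final ``rigidity'' step: deducing $\varphi^N_x-\varphi^N_{x+1}\to 0$ from the perturbed harmonic relations above together with the uniform bound $\varphi^N_y\in[0,1]$. I would proceed by contradiction. Suppose that along some subsequence $N_k$ one had $\Prob(|\varphi^{N_k}_x-\varphi^{N_k}_{x+1}|>\varepsilon)>\varepsilon$ for some $\varepsilon>0$. By compactness of $[0,1]^\Z$ in the product topology, Prokhorov's theorem combined with a diagonal extraction would yield a further subsequence along which the entire random field $(\varphi^{N_k}_y)_{y\in\Z}$ converges in distribution to some limit $(\varphi_y)_{y\in\Z}$. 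By Skorohod representation and a countable union over $y\in\Z$, almost every realization of the limit would satisfy $\varphi_y=p\varphi_{y-1}+q\varphi_{y+1}$ simultaneously for all $y\in\Z$. The general bounded solution of this recurrence on $\Z$ has the form $y\mapsto A+B(p/q)^y$; since $p>q$, the exponential term is unbounded, which forces $B=0$ and hence constancy of $\varphi$. Therefore $\varphi_x=\varphi_{x+1}$ almost surely, contradicting the choice of subsequence. The key ingredients of this rigidity argument are the strict asymmetry $p>q$ (ruling out nonconstant bounded harmonics) and the uniform upper bound $\varphi^N_y\le 1$ coming from $\kappa^N\le 1$ and $g\le 1$.
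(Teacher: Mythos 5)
Your proposal is correct, but it takes a genuinely different route from the paper's proof. The paper starts from the same counting-process decomposition as you (its display \eqref{badswitch}), explicitly remarks that this naive compensator, which involves $\kappa^N(x+1)g(\zeta^N_s(x+1))$, ``is not adapted to our purpose'', and then abandons it in favour of an excursion argument: emissions from $x$ to the right are classified according to whether the $(p,q)$-random-walk skeleton attached to the emitted particle ever returns to $x$; gambler's ruin shows the non-returning emissions form a point process with intensity exactly $(p-q)\kappa^N(x)g(\zeta^N_s(x))$ (the martingale \eqref{martingale_marks}), and the corrections $\mathcal N_1,\mathcal N_2$ are bounded by the number of particles in a window $(x,x+r)$ at times $0$ and $t_N$ (killed by \eqref{assumption_lemswitch}) plus long excursions of probability $(q/p)^r$. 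You instead keep the naive compensator and repair it by a soft rigidity argument: Dynkin's formula for the coordinate $\eta\mapsto\eta(y)$ at every site $y$, together with \eqref{assumption_lemswitch} and a Doob bound, shows that the $[0,1]$-valued field $\varphi^N_y$ is asymptotically discrete-harmonic for the biased walk; since the only bounded solutions of $\varphi_y=p\varphi_{y-1}+q\varphi_{y+1}$ on $\Z$ are constants when $p>q$ (the same strict asymmetry the paper exploits via gambler's ruin), a compactness/subsequence argument in $[0,1]^\Z$ forces $\varphi^N_x-\varphi^N_{x+1}\to 0$, which is exactly the remaining term $q(G^N_x-G^N_{x+1})/t_N$ after subtracting $(p-q)G^N_x$. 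What your approach buys is economy: no excursion bookkeeping, only martingale brackets of order $t_N$, the product-topology compactness of $[0,1]^\Z$ and a Liouville-type statement for the biased discrete Laplacian; what the paper's approach buys is a quantitative error bound (explicit in $r$ through $(q/p)^r$ and the particle counts near $x$), whereas yours is purely qualitative. Two small points you should make explicit: when $q=0$ (allowed, since $p\in(1/2,1]$) the term to be controlled is $q(G^N_x-G^N_{x+1})=0$ and the rigidity step, whose formula $A+B(p/q)^y$ degenerates, is simply not needed; and Dynkin's formula for the unbounded function $\eta\mapsto\eta(y)$ deserves a word of justification (condition on $\zeta^N_0$ finite at $y$, which \eqref{assumption_lemswitch} grants with probability tending to one, and note that arrival and departure rates at $y$ are bounded by $1$, so the compensated counting processes are square-integrable martingales with brackets at most $2t_N$, uniformly) — the same level of care as in the paper's own martingale estimates.
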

\begin{proof}{Lemma}{lemswitch}
   The current
$\Gamma^{\kappa^N}_x(s,\eta) $ has compensator 
\[
A= \kappa^N(x) p \int _0 ^ s g( \eta_s(x))ds \ - \  \kappa^N(x+1) q \int _0 ^ s g( \eta_s(x+1))ds
\]
 If we let  $M = \Gamma^{\kappa^N}_x(s,\eta) -A$, since $\Gamma^{\kappa^N}_x(s,\eta)$ 
 is a Poisson point process with jump size one and $A$ is continuous it can be 
 shown that the quadratic variation of $M$ is $A$. 
Now given $g$ is bounded, Doob's inequality yields that
\be\label{badswitch}
{t_N}^{-1}\left(\Gamma^\alpha_x(t_N,\eta)
 -  \kappa^N(x) \int _0 ^ {t_N} pg( \eta_s(y))ds + \kappa^N(x+1) q \int _0 ^ {t_N} g( \eta_s(x+1))ds\right)
\ee
converges  to 0
in probability as $t_N$ tends to infinity.  However, \eqref{badswitch} is not adapted to our purpose. 
The proof of \eqref{switch} requires a more careful analysis.\\ \\
{}From our construction of the zero-range process, we can associate 
to each particle that leaves $x$ a ($p,q$) random walk that is killed upon its return to $x$ 
(which is certain if the particle leaves $x$ to the left).  
We can accordingly write, for any $t>0$,
\[
\Gamma^{\kappa^N}_x(t,\eta) \ ={\mathcal N} (x,t) \ + {\mathcal N}_1(x,t) \ - {\mathcal N}_2(x,t),
\]
where ${\mathcal N}(x,t)$ is the number of particle emissions from $x$ to the right 
where the associated random walk never returns to $x$, 
${\mathcal N}_1(x,t)$ is the number of emissions from $x$ to the right which will 
eventually return but which have not done so 
by time $t$ and ${\mathcal N}_2(x,t)$ is the number of particles to the right of $x$ at time $0$ 
which hit $x$ by time $t$.
These three random variables can be understood (and bounded) as follows.\\ \\
\indent
We claim that ${\mathcal N}(x,t)$ is a point process with random intensity 
$(p-q) \kappa^N(x)g(\zeta^N_s(x))$, 
thus with compensator  $\int _0 ^ t (p-q)\kappa^N(x)g( \zeta^N_s(x))ds $.
Indeed, if we first consider the process of particles leaving $x$ to the right 
(regardless of whether an emitted particle ever comes back to $x$), 
we obtain a point process with intensity $p\kappa^N(x)g(\zeta^N_s(x))$.
Then we associate to each such particle a mark (say  $0$ or $1$) to indicate 
whether the particle will ever come back to $x$ (in which case the mark value 
is $1$) or not. We note that this mark depends only on the skeleton of the particle's 
random walk. These skeletons (and thus the marks) are mutually independent, and the set
 of these skeletons is independent of the original point process.
Using gambler's  ruin estimate, the probability for each mark to be $0$ is $1-\frac{q}{p}$. 
Hence, as claimed, ${\mathcal N}(x,t)$ is a point process with intensity 
$(1-\frac{q}{p})p\kappa^N(x)g(\zeta^N_s(x))=(p-q)\kappa^N(x)g(\zeta^N_s(x))$.
Thus,
\be\label{martingale_marks}
{\mathcal N}(x,t)- \kappa^N(x) \int _0 ^ {t}(p-q)g( \zeta^N_s(x))ds   
\ee
is a martingale whose quadratic variation can be bounded by  (recall \eqref{ginfty}) 
\be\label{bound_variation}
g(+\infty)\kappa^N(x)t\leq t
\ee
Then ${\mathcal N}_1(x,t)$ can be bounded (for each positive integer $r$) by 
${\mathcal N}^r_1(y,t)  + {\mathcal N}^{r \prime}_1(y,t)$, 
where ${\mathcal N}^r_1(x,t)$ is the number of emissions before time $t$ 
from $x$ that hit site $x+r$ 
and then return to $x$  and ${\mathcal N}^{r \prime}_1(x,t)$ is the number 
of particles in $(x,x+r)$ at time 
$t$.
Finally, ${\mathcal N}_2(x,t)$ has the elementary bound (for each positive integer $r$)  
${\mathcal N}^r_2(x,0) \, + \, {\mathcal N}^{r\prime}_2(x,t)$, where 
${\mathcal N}^r_2(x,0)$ is simply the number of particles 
in interval $(x,x+r)$ at time $0$
and  $ {\mathcal N}^{r \prime}_2(x,t)$ is the number of emissions from $x+r$ by time $t$ 
that hit site $x$ before returning to $x+r$ (including those that do so after time $t$).  
Thus ${\mathcal N}^r_2(x,0)$ is a fixed finite random variable not depending on $t$. \\ \\
On the other hand, ${\mathcal N}^{r \prime}_2(x,t)$ and ${\mathcal N}^{r}_1(x,t)$
are stochastically bounded by a Poisson random variable of parameter 
$t g(\infty)(\frac{q}{p})^r$.
Indeed, since the hitting of $x$ by a particle emitted from $x+r$ depends only 
on the skeleton of the particle's random walk, 
we may argue (as above for $\mathcal N$) that ${\mathcal N}^{r \prime}_2(x,t)$ has intensity
$(\frac{q}{p})^r \kappa^N(x+r)g(\zeta^N_s(x+r))$, $(\frac{q}{p})^r$ 
being the probability that a particle starting from $x+r$ and performing 
a $(p,q)$-random walk ever hits $x$. \\ \\
Thus, given any $\varepsilon > 0 $, $r$ may be chosen so large that  
\be\label{solarge}
\Exp({\mathcal N}^r_1(x,t)) + \Exp({\mathcal N}^{r \prime}_2(x,t)) \ \leq \varepsilon  t
\ee
On the other hand, assumption \eqref{assumption_lemswitch} implies
\be\label{assumption_implies}
t_N^{-1}({\mathcal N}^{r\prime}_1(x,t_N) + {\mathcal N}^{r}_2(x,0)) \to 0\quad
\mbox{in probability}
\ee
The proof is concluded by applying Doob's inequality to the martingale 
\eqref{martingale_marks}, noting that the bound \eqref{bound_variation} is a uniform $O(t)$. 
\end{proof}
\mbox{}\\ \\
We now detail the behaviour of the flux over time interval $(N(t - \delta ), Nt)$. 
\begin{lemma}
\label{lemflu}
Let the sequence $(x_N) $ be as above, and $x\in\Z$. Let $y_N:=x_N+x$. Then
$$  
\limsup_{N\to+\infty}\frac{\Exp[ \Gamma^\alpha_{y_N}(Nt,\eta^N_0)
 - \Gamma^\alpha_{y_N}(N(t-\delta),\eta^N_0)]}{N\delta} 
\leq (p-q)c$$
\end{lemma}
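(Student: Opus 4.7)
Let $\varepsilon > 0$. The plan is to bound the current at $y_N$ via a nearby ``slow site'' $z_N \le y_N$ where $\alpha(z_N)\le c+\varepsilon$, and to apply the source bound of Proposition \ref{current_source} at $z_N$ rather than at $y_N$. A direct application at $y_N$ would only give the bound $(p-q)c + p[\overline{\alpha}(x)-c]$, which is strictly worse than $(p-q)c$ since $\overline{\alpha}(x)$ can exceed $c$ in the absence of typicality. Since $\rho_*(t,u)\ge\rho_c$ excludes the subcritical initial-data case of Theorem \ref{th_hydro}, Assumption \ref{assumption_dense} must be in force, and combining Lemma \ref{lemma_ergodic} with the macroscopic density of slow sites allows us to choose sites $z_N \le y_N$ with $\alpha(z_N)\le c+\varepsilon$ and $(y_N-z_N)/N\to 0$ as $N\to+\infty$.

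By the Markov property the expectation in the lemma equals $\Exp\,\Gamma^\alpha_{y_N}(N(t-\delta),Nt,\eta^N_{N(t-\delta)})$. The identity \eqref{difference_currents} applied at the pair of sites $z_N<y_N$ yields, for any initial configuration $\zeta$,
\[
\Gamma^\alpha_{y_N}(N(t-\delta),Nt,\zeta)\le \Gamma^\alpha_{z_N}(N(t-\delta),Nt,\zeta)+\sum_{x=z_N+1}^{y_N}\zeta(x),
\]
and Corollary \ref{corollary_consequence} (applied under the Harris coupling after the Markov shift) dominates the first term, in expectation, by the current at $z_N$ of the process started from the maximal source configuration $\eta^{*,z_N}$. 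This produces
\[
\Exp\,\Gamma^\alpha_{y_N}(N(t-\delta),Nt,\eta^N_{N(t-\delta)})\le \Exp\,\Gamma^\alpha_{z_N}(N\delta,\eta^{*,z_N})+\Exp\!\sum_{x=z_N+1}^{y_N}\eta^N_{N(t-\delta)}(x).
\]

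Next I would apply Proposition \ref{current_source} with time parameter $N\delta$ and source site $z_N$ (whose rescaled position $z_N/(N\delta)\to u/\delta$ exists); together with $\alpha(z_N)\le c+\varepsilon$ this gives
\[
\limsup_{N\to+\infty}(N\delta)^{-1}\Exp\,\Gamma^\alpha_{z_N}(N\delta,\eta^{*,z_N})\le (p-q)c+p\varepsilon.
\]
The residual term should be $o(N\delta)$, since it counts the expected mass in an interval of macroscopic length $(y_N-z_N)/N\to 0$, and the hydrodynamic limit (Theorem \ref{th_hydro}) forces the empirical density at time $N(t-\delta)$ to be close to $\rho(t-\delta,\cdot)\in L^\infty$, so the mass in such a vanishing window tends to zero in probability. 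Combining these estimates and letting $\varepsilon\to 0$ yields the claimed bound $(p-q)c$.

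The main obstacle is the final step: upgrading the in-probability decay of the residual mass to convergence in expectation, since general initial data with bounded limiting density profile need not have uniformly bounded expected occupation numbers. The plan would be to truncate via the finite propagation property (Lemma \ref{lemma_finite_prop}) so as to reduce to an initial configuration supported in a window of size $O(N\delta)$ around $y_N$, and then dominate by a stationary process whose occupation numbers are explicit (in the spirit of the proof of \eqref{eq:loc_eq_super} in Section \ref{sec:proof_loc_eq_2}), thereby obtaining uniform integrability.
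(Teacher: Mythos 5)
Your reduction is essentially the paper's: rewrite the increment as $\Gamma^\alpha_{y_N}(N(t-\delta),Nt,\eta^N_{N(t-\delta)})$, pick a nearby slow site via Lemma \ref{lemma_ergodic}, compare the currents at $y_N$ and at the slow site through \eqref{difference_currents}, dominate the latter by the source current via Corollary \ref{corollary_consequence}, and invoke Proposition \ref{current_source} (your direct use of it in expectation, with $\alpha(z_N)\le c+\varepsilon$ and $\varepsilon\to 0$, is fine). The genuine gap is exactly the point you flag: the residual term $\sum_{x=z_N+1}^{y_N}\eta^N_{N(t-\delta)}(x)$ is $o(N)$ only \emph{in probability} (via Theorem \ref{th_hydro}), and the hypotheses give no control whatsoever of its expectation, which may even be infinite. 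Your proposed repair does not close this. Lemma \ref{lemma_finite_prop} only yields agreement of truncated and original processes outside an event of small probability, so you are left integrating an uncontrolled occupation number over that bad event — the same problem again; and no domination from above by a stationary or boundary-driven system is available, because the arbitrary initial data $\eta^N_0$ need not lie below any such process, and the window mass at time $N(t-\delta)$ is precisely what such a domination would have to control.

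The paper's resolution is different and never takes the expectation of the residual term. The decomposition is used only to show that the positive part $\bigl\{(N\delta)^{-1}\bigl[\Gamma^\alpha_{y_N}(Nt,\eta^N_0)-\Gamma^\alpha_{y_N}(N(t-\delta),\eta^N_0)\bigr]-(p-q)c\bigr\}^+$ vanishes in probability; one then observes that this sequence is uniformly integrable, because in the Harris construction every rightward particle jump across the bond $(y_N,y_N+1)$ during $(N(t-\delta),Nt]$ requires a Poisson mark $(s,y_N,u,1)$, so the positive part of the current increment is stochastically dominated by a Poisson random variable of mean $pN\delta$, uniformly in the initial configuration; after division by $N\delta$ its second moment is bounded. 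Uniform integrability combined with convergence in probability gives that the expectation of the positive part tends to $0$, which is the statement of the lemma. This single-bond Poisson domination — proving uniform integrability of the quantity you actually need, rather than trying to bound the expectation of each piece of the decomposition — is the missing idea in your write-up.
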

\begin{proof}{Lemma}{lemflu}
 First we note that, by definition \eqref{current_harris},
\be\label{rewrite_diff}
\Gamma^\alpha_{y_N}(Nt,\eta^N_0)
 - \Gamma^\alpha_{y_N}(N(t-\delta),\eta^N_0)=\Gamma^\alpha_{y_N}(N(t-\delta),Nt,\eta^N_{N(t-\delta)})
\ee
By Lemma \ref{lemma_ergodic}, there exists a sequence $(z_N)_{N\in\N}$ such that 
\be\label{properties_zn}\lim_{N\to+\infty}N^{-1}z_N=0, \quad 
\lim_{N\to+\infty}\alpha(y_N-z_N)=c
\ee
By \eqref{difference_currents},
\begin{eqnarray}
& N^{-1}\Gamma^\alpha_{y_N}(N(t-\delta),Nt,\eta^N_{N(t-\delta)})-
N^{-1}\Gamma^\alpha_{y_N-z_N}(N(t-\delta),Nt,\eta^N_{N(t-\delta)}) & \nonumber\\
&
\dsp\leq N^{-1}\sum_{z=y_N-z_N+1}^{y_N}\eta^N_{N(t-\delta)}(z) & \label{bydifference}
\end{eqnarray}
Let $\varepsilon'>0$. Using the first limit in \eqref{properties_zn}, 
the r.h.s. of \eqref{bydifference} can be bounded for $N$ large enough by
\[
N^{-1} \sum_{z=y_N-N\varepsilon'}^{y_N}\eta^N_{N(t-\delta)}(z)
\]
which, by Theorem \ref{th_hydro}, converges in probability to $\int_{u-\varepsilon'}^u\rho(t-\delta,z)dz$.
Since $\varepsilon'$ is arbitrary, it follows that
\be\label{remainder}
\lim_{N\to+\infty}N^{-1}\sum_{z=y_N-z_N+1}^{y_N}\eta^N_{N(t-\delta)}(z)=0\quad\mbox{in probability}
\ee
On the other hand, by Corollary \ref{corollary_consequence} applied to $y=y_N-z_N$,
\be\label{boundbysource}
\Gamma^\alpha_{y_N-z_N}(N(t-\delta),Nt,\eta^N_{N(t-\delta)})\leq
\Gamma^\alpha_{y_N-z_N}(N(t-\delta),Nt,\eta^{*,y_N-z_N})
\ee
By  Proposition \ref{current_source}  and the second limit in \eqref{properties_zn},
\be\label{limsource}
\lim_{N\to+\infty}
\left\{
N^{-1}
\Gamma^\alpha_{y_N-z_N}(N(t-\delta),Nt,\eta^{*,y_N-z_N})
- (p-q)c
\right\}^+=0
\ee
in probability. Identity \eqref{rewrite_diff}, 
inequalities \eqref{bydifference} and \eqref{boundbysource}, 
and limits \eqref{remainder} and \eqref{limsource} imply that
\be\label{current_thus}
\left\{
\frac{\Gamma^\alpha_{y_N}(Nt,\eta^N_0)
 - \Gamma^\alpha_{y_N}(N(t-\delta),\eta^N_0)]}{N\delta}-(p-q)c
\right\}^+
\ee
vanishes in probability. But the sequence of random variables 
\eqref{current_thus} is uniformly integrable,
as can be seen by bounding the currents with suitable Poisson processes. 
Hence, the expectation of \eqref{current_thus} also vanishes, which implies the result of the lemma.
\end{proof}
\mbox{}\\ \\
\begin{proof}{Proposition}{prop:inv_env}
\mbox{}\\ \\
{\em Proof of (i).} Recall  $(P_t^\alpha)_{t\geq 0}$ 
denotes the semigroup generated by \eqref{generator}. 
We show below that for positive $t$ and  bounded  continuous $f$,
\be\label{uniform_alpha}
P_t ^{\alpha ^N }f \stackrel{\rm uniformly}{\longrightarrow} P_t ^{\overline{\alpha}} f
\ee
Since $f$ is continuous and the semigroup $(P_t^{\overline{\alpha}})_{t\geq 0}$ is Feller, we have, as $N\to+\infty$,
\be\label{since_feller}
\int fd \nu^{N,\delta}_t \ \rightarrow \int fd \nu \mbox{ and } 
\int P_t ^{\overline \alpha } fd \nu^{N,\delta}_t \ \rightarrow 
\int P_t ^{\overline \alpha } fd \nu.
\ee
Next we write
\begin{eqnarray*}
\int P_t ^{\alpha{^N}} f d\nu^{N,\delta}_t -  \int P_t ^{\overline \alpha } f d\nu & = & 
\int P_t ^{\alpha{^N}} f d\nu^{N,\delta}_t -  \int P_t ^{\overline \alpha }f d\nu^{N,\delta}_t\\
& + & \int P_t ^{\overline \alpha }f d\nu^{N,\delta}_t - \int P_t ^{\overline \alpha }f d\nu
\end{eqnarray*}
On the above right-hand side, the first line vanishes as $N\to+\infty$ by \eqref{uniform_alpha},
and the second one by \eqref{since_feller}, hence
\be\label{feller_hence}
\int P_t ^{\alpha{^N}} f d\nu^{N,\delta}_t\longrightarrow
\int P_t ^{\overline{\alpha}} f d\nu
\ee
By the usual Cesaro bounds  (see proof of
\cite[Chapter 1, Proposition 1.8(e)]{ligbook}) 
\[
\left\vert  \int P_t ^{\alpha{^N}} f d\nu^{N,\delta}_t - \int fd \nu^{N,\delta}_t  \right\vert
\ \leq \ \frac{2t}{N}||f||_\infty,
\]
Thus, recalling \eqref{since_feller}, we obtain
\[\int P_t ^{\alpha{^N}} f d\nu^{N,\delta}_t \rightarrow \int fd \nu\]
Combined with \eqref{feller_hence}, this shows $\nu$ is an equilibrium for 
semigroup $(P_t ^{\overline \alpha } )_{t \geq 0}$ as claimed.\\ \\
{\em Proof of \eqref{uniform_alpha}.}
 Let $ \beta,\beta'\in{\bf A}$, $f:\overline{\mathbf X}\to\R$ 
be a bounded local function, and $a,b\in\Z$ such that $a\leq b$ 
and $f(\eta)$ depends only on the restriction of $\eta$ to $[a,b]$. 
We claim that
\begin{eqnarray}
   \vert P_t ^\beta f(\eta)   - P_t ^{\beta'} f(  \eta)   \vert
& \leq & 2   || f||_\infty 
[2 \Prob(\mathcal P(g(\infty )t) \geq m^ \prime )\label{tomsclaim}\\
& + & (2m^\prime +(b-a+1))
\Big\{ 1-e^{-g(\infty) \sup_{x: d(x,[a,b]) \leq m^\prime}|\beta(x) - \beta'(x)|t}\Big\}\nonumber,
\end{eqnarray}
which implies \eqref{uniform_alpha}. To prove \eqref{tomsclaim},
we write 
\be\label{jensen_lipf}
   \vert P_t ^\beta f(\eta)   - P_t ^{\beta'} f(  \eta)   \vert \leq 
	2||f||_\infty\Prob\left(
	\exists x\in[a,b]:\,\eta^\beta_t(x)\neq \eta^{\beta'}_t(x)
	\right)
\ee
where $(\eta_t^\beta)_{t\geq 0}$ and $(\eta_t^{\beta'})_{t\geq 0}$ 
denote processes with respective environments $\beta$ and $\beta'$. 
Though these environments (may) differ, we can still couple them as
in Subsection \ref{subsec:Harris} by using the same Poisson measure. 
However, unlike what happens when coupling two processes in the same 
environment (see Subsection \ref{subsec:Harris}), a pair of opposite 
discrepancies may be created. This may happen for instance if 
$\eta_{t-}^\beta(x)>\eta_{t-}^{\beta'}(x)$
but an $\eta^{\beta'}$ particle jumps alone (which may only occur 
if $\beta'(x)>\beta(x)$) to a site $x+z$ such that 
$\eta_{t-}^\beta(x+z)<\eta_{t-}^{\beta'}(x+z)$. The rate of such 
creation at site $x$ cannot exceed $|\beta(x)-\beta'(x)|$. 
Thus the probability that at least one pair of opposite discrepancies 
has been created by time $t$ by a jump from a site in $[a-m',b+m']$ 
is controlled by the second term on the r.h.s. of \eqref{tomsclaim}. 
Outside this event, any discrepancy present in $[a,b]$ at time $t$ 
has been created outside $[a-m',b+m']$. Since
all discrepancies, once created, jump with maximum rate $1=g(+\infty)$, 
the rightmost discrepancy created to the left of $a-m'$ and the leftmost 
discrepancy created to the right of $b+m'$ can be controlled by rate 
$1$ Poisson processes. Thus the probability of such a discrepancy 
having reached $[a,b]$ by time $t$ is controlled
 by the first term on the r.h.s. of \eqref{tomsclaim}.\\ \\
{\em Proof of (ii).}  This follows from statement \eqref{eq:loc_eq_super_bis} of Theorem \ref{th_strong_loc_eq_super}. \\ \\
{\em Proof of (iii).}
 We apply Lemma \ref{lemswitch} to $\kappa^N:=\alpha^N:=\tau_{x_N}\alpha$ and $\zeta^N_t:=\tau_{x_N}\eta^{\alpha,N}_t$, taking successively $t_N=Nt$ and $t_N=N(t-\delta)$. Assumption \eqref{assumption_lemswitch} of the Lemma is satisfied, because for any $s\geq 0$, $\varepsilon>0$ and $x\in\Z$,
\[
N^{-1}\zeta^N_{Ns}(x) \leq N^{-1}\sum_{y=x}^{x+N\varepsilon}\zeta^N_{Ns}(y)=
N^{-1}\sum_{y=x}^{x+N\varepsilon}\eta^N_{Ns}(y+x_N)
\]
and the latter converges to $\int_0^\varepsilon\rho(s,u+z)dz$ in probability (for $s=0$, this follows from the assumption of an initial density profile in Theorem \ref{th_hydro}. For $s>0$, this follows from the conclusion of the theorem).
By difference, using definition \eqref{def_nutdelta} of $\nu_t^{N,\delta}$ to rewrite the time integral in Lemma \ref{lemswitch}, we obtain
\[
\Exp\left\{
\frac{\Gamma_x^{\alpha^N}(Nt,\zeta^N_0)-\Gamma_x^{\alpha^N}(N(t-\delta),\zeta^N_0)}{N\delta}\right\}-\alpha^N(x)(p-q)
\int_{\overline{\mathbf X}}g(\eta(x))d\nu_t^{N,\delta}(d\eta)
\]
vanishes as $N\to+\infty$.
We apply  Lemma \ref{lemflu} to the above expectation, noting that, for any $s\geq 0$,
\[
\Gamma_x^{\alpha^N}(s,\zeta^N_0)=\Gamma_{x_N+x}^\alpha(s,\eta^N_0)
\]
Thus,  using the convergences $\alpha^N(x)\to\overline{\alpha}(x)$ and $\nu_t ^{N,\delta}\to\nu$, as well as (ii), we obtain the desired conclusion.
\end{proof}
\section{Proof of Theorem \ref{cor_dream_bfl}}\label{sec:proof_bfl}
{\em Step one. }We first prove the result in the particular case where $\eta_0(x)=0$ for all $x\geq 0$.
We need the following result.
\begin{proposition}\label{prop_riemann}
Let $\rho\geq 0$, and let $\mathcal R_{\rho,0}(.,.)$ denote the entropy solution to \eqref{conservation_law} with initial condition
\be\label{initial_riemann}
\mathcal R_{\rho,0}(0,x)=\rho{\bf 1}_{(-\infty,0)}(x)
\ee
Then for every $t>0$, 
\be\label{limits_riemann}
\liminf_{(t',u')\to(t,0)}\mathcal R_{\rho,0}(t',u')=\rho\wedge\rho_c, \quad\limsup_{(t',u')\to(t,0)}\mathcal R_{\rho,0}(t',u')\color[rgb]{0,0,0}\color[rgb]{0,0,0}=\rho
\ee
In particular, if $\rho\leq\rho_c$, ${\mathcal R}_{\rho,0}$ is continuous at $(t,0)$.
\end{proposition}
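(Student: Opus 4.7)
The plan is to exploit the self-similar structure of the entropy solution to a Riemann problem, and to reduce the claim to identifying the one-sided limits $U(0^\pm)$ of a monotone profile $U$. By the classical theory of scalar conservation laws (Kruzhkov uniqueness plus the scaling symmetry of the initial data), the entropy solution with step data \eqref{initial_riemann} is self-similar, so one may write $\mathcal R_{\rho,0}(t',x)=U(x/t')$ for a nonincreasing $U:\R\to[0,\rho]$ with $U(-\infty)=\rho$ and $U(+\infty)=0$. Hence $\limsup_{(t',u')\to(t,0)}\mathcal R_{\rho,0}(t',u')=U(0^-)$ and the corresponding liminf equals $U(0^+)$, and it is these two quantities that I need to compute.

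Next, I would recall that $U$ is determined by $\hat f$, the upper concave envelope of $f$ restricted to $[0,\rho]$: writing $\hat f'_\pm$ for one-sided derivatives, $U(s)=\rho$ for $s<\hat f'_-(\rho)$, $U(s)=0$ for $s>\hat f'_+(0)$, and on the intermediate range $U$ decreases from $\rho$ to $0$ via rarefactions (where $\hat f=f$) and shocks (across linear segments where $\hat f>f$). The key observation for $U(0^-)$ is that $\hat f$ is nondecreasing on $[0,\rho]$: the constant affine function $L\equiv f(\rho)$ dominates $f$ on $[0,\rho]$ (since $f$ is nondecreasing on $[0,+\infty)$), so $\hat f\le f(\rho)$ throughout, which together with $\hat f(\rho)=f(\rho)$ shows $\hat f$ attains its maximum at $\rho$; concavity then forces $\hat f$ nondecreasing, and in particular $\hat f'_-(\rho)\ge 0$. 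Therefore $U(s)=\rho$ for all $s<0$, giving $U(0^-)=\rho$.

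For $U(0^+)$, I split into two subcases. If $\rho\le\rho_c$, then $f$ is strictly increasing on $[0,\rho]$ and hence so is $\hat f$. Whatever the value of $\hat f'_-(\rho)\ge 0$, the profile $U$ equals $\rho$ on the (possibly degenerate) interval $[0,\hat f'_-(\rho)]$ and the rarefaction/shock structure continuously joins $\rho$ to strictly smaller values only for $s>\hat f'_-(\rho)\ge 0$; in either situation $U(0^+)=\rho=\rho\wedge\rho_c$. If on the other hand $\rho>\rho_c$, the same domination argument applied to $L\equiv(p-q)c$ (affine and dominating $f$ on $[0,\rho]$, since $f\le(p-q)c$ everywhere) gives $\hat f\le(p-q)c$; combined with $\hat f(\rho_c)\ge f(\rho_c)=(p-q)c$ and $\hat f(\rho)=(p-q)c$, concavity forces $\hat f\equiv(p-q)c$ on $[\rho_c,\rho]$. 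Thus the whole interval $[\rho_c,\rho]$ is collapsed onto the single characteristic speed $0$ in the Riemann fan, producing a stationary contact discontinuity at $s=0$ connecting $\rho$ on the left to $\rho_c$ on the right, followed by a rarefaction from $\rho_c$ down to $0$ for $s>0$; consequently $U(0^+)=\rho_c=\rho\wedge\rho_c$.

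The main obstacle I anticipate is the rigorous handling of the flat case $\rho>\rho_c$: because every state in $[\rho_c,\rho]$ shares the same flux $(p-q)c$ and zero characteristic speed, all such states are formally admissible ``at $s=0$'', and one must invoke the Lax/Oleinik entropy condition (or equivalently the Kruzhkov uniqueness theorem) to pick out $\rho_c$ rather than any intermediate value as $U(0^+)$. Once the concave envelope is identified as constant $(p-q)c$ on $[\rho_c,\rho]$, this selection is standard, but it does rely on uniqueness of the entropy solution rather than on any strict convexity or concavity of $f$ on $[0,\rho_c]$, which Lemma \ref{lemma_properties_flux} does not provide globally.
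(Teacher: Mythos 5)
Your proof is correct, but it takes a different route from the paper's. The paper disposes of the proposition in two lines by quoting a variational characterization of the Riemann solution (\cite[Proposition 4.3]{bgrs5}): the limsup and liminf at $(t,u)$ equal $\sup$ and $\inf$ of ${\rm argmax}_{r\in[0,\rho]}\left[f(r)-\frac{u}{t}r\right]$, so at $u=0$ one only has to note that $f$ is strictly increasing on $[0,\rho_c]$ and constant equal to $(p-q)c$ beyond, giving ${\rm argmax}=\{\rho\}$ if $\rho\leq\rho_c$ and $[\rho_c,\rho]$ if $\rho>\rho_c$. You instead reconstruct the solution itself: self-similarity plus the upper concave envelope $\hat f$ of $f$ on $[0,\rho]$, monotonicity of the profile $U$, the observation that $\hat f$ is nondecreasing (so the left trace at $s=0$ is $\rho$), strict monotonicity of $\hat f$ when $\rho\leq\rho_c$, and, when $\rho>\rho_c$, flatness of $\hat f=f$ on $[\rho_c,\rho]$ forcing an admissible stationary contact discontinuity from $\rho$ to $\rho_c$ at $s=0$, with the selection of $\rho_c$ (rather than an intermediate state) justified by the Oleinik chord condition and Kruzhkov uniqueness, exactly as you flag. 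The two small steps you assert without proof are easy to supply: strict increase of $f$ on $[0,\rho]$ does pass to $\hat f$ (otherwise $\hat f$ would be constant on a subinterval and then increase, contradicting concavity, or be constant up to $\rho$, contradicting $\hat f(x)<f(\rho)=\hat f(\rho)$ for $x<\rho$), and the identification of the space-time limsup/liminf in \eqref{limits_riemann} with the one-sided limits $U(0^-),U(0^+)$ of the monotone profile is precisely what the cited \cite[Proposition 4.3]{bgrs5} encapsulates, so your argument essentially re-derives that ingredient rather than quoting it. What your approach buys is a self-contained argument within classical scalar conservation-law theory, at the price of handling the wave structure (including the degenerate zero-speed fan) by hand; what the paper's approach buys is brevity and no case analysis on the wave pattern.
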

\begin{proof}{Proposition}{prop_riemann}
By \cite[Proposition 4.3]{bgrs5}, the entropy solution satisfies
\begin{eqnarray}\label{entropy_riemann_1}
\limsup_{(t',u')\to(t,u)}{\mathcal R}_{\rho,0}(t',u') & = & \sup{\rm argmax}_{r\in[0,\rho]}\left[f(r)-\frac{u}{t}r\right]\\
\liminf_{(t',u')\to(t,u)}{\mathcal R}_{\rho,0}(t',u') & = & \inf{\rm argmax}_{r\in[0,\rho]}\left[f(r)-\frac{u}{t}r\right]\label{entropy_riemann_2}
\end{eqnarray}
where ${\rm argmax}$ denotes the set of maximizers. We apply this to $u=0$.
Since $f$ is strictly increasing on $[0,\rho_c]$, if $\rho\leq\rho_c$, the set of maximizers in \eqref{entropy_riemann_1}--\eqref{entropy_riemann_2} reduces to $\rho$.
If $\rho>\rho_c$, this set is $[\rho,\rho_c]$.
\end{proof}
\mbox{}\\ \\
The convergence in Theorem \ref{cor_dream_bfl} restricted to integer times
then follows from Theorems \ref{th_strong_loc_eq} and \ref{th_strong_loc_eq_super} by taking 
$\eta^N_0=\eta_0$, $x_N=0$ for all $N\in\N$, and using Proposition \ref{prop_riemann} with $u=0$ and $t=1$.
Indeed, the above definition of the sequence $(\eta^N_0)_{N\in\N}$ implies that this sequence has density
profile $\mathcal R_{\rho,0}(0,.)$. Besides, the particular sequence $x_N=0$ is typical in the sense of Definition \ref{def_typical}.\\ \\
To fill the gap between integer times and all times we use the semigroup property to write
the law of $\eta_t^\alpha$ as follows:
\[
\delta_{\eta_0}P^\alpha_t=\left(\delta_{\eta_0}P^\alpha_{\lfloor t \rfloor}\right)P^\alpha_{t-\lfloor t\rfloor}
\]
Using the limit established for integer times, we have 
\[
\left(\delta_{\eta_0}P^\alpha_{\lfloor t \rfloor}\right)\rightarrow\mu^{\alpha,\rho\wedge\rho_c}
\]
Since $\mu^{\alpha,\rho\wedge\rho_c}$ is invariant for $(P^\alpha_s)_{s\geq 0}$ and the semigroup is weakly continuous, it follows that $\delta_{\eta_0}P^\alpha_t\to\mu^{\alpha,\rho\wedge\rho_c}$.\\ \\
{\em Step two. } We now consider the general case. We couple the process $(\eta^{\alpha}_t)_{t\geq 0}$ with the process $(\zeta^{\alpha}_t)_{t\geq 0}$ whose initial state is defined by
\[
\zeta_0(x):=\eta_0(x){\bf 1}_{(-\infty,0)}(x)
\]
Since $\zeta^N_0\leq\eta^N_0$ and the mapping \eqref{mapping} is nondecreasing, we have 
$\zeta^{\alpha}_t\leq\eta^{\alpha}_t$ for all $t\geq 0$. Let
$\gamma^{\alpha}_t:=\eta^{\alpha}_t-\zeta^{\alpha}_t$ be the configuration of 
$\eta$ particles in excess with respect to $\zeta$ particles.
The result will follow if we show that, for any $x\in\Z$, 
\be\label{no_more_second}
\lim_{t\to+\infty}\Prob\left(
\gamma^{\alpha}_t(x)=0
\right)=1
\ee
Indeed, let $\psi$ be a bounded local function on $\mathbf X$, 
depending only on sites $x\in[-R,R]$ for some $R\in\N$, and let $E_t$ 
denote the event that $\gamma_t^\alpha(x)=0$ for all $x\in[-R,R]$. 
Since $\psi(\eta_t^\alpha)=\psi(\zeta_t^\alpha)$ on $E_t$, we have
$$
\left|
\Exp\psi(\eta_t^\alpha)-\Exp\psi(\zeta_t^\alpha)
\right|\leq 2||\psi||_\infty\Prob(E_t^c)
$$
which vanishes by \eqref{no_more_second}.
To prove \eqref{no_more_second}, using the definition and dynamics of classes 
from Subsection \ref{sec:class}, we intepret $\zeta$ particles as first class particles and 
$\gamma$ particles as second class particles among $\eta$ particles. In particular, 
we label $\gamma$ particles increasingly from left to right, $0$ being the label of the leftmost particle.
The dynamics of second class particles was defined in such a way that the order of labels is maintained during the evolution, and the skeleton of the trajectory of each second class particle is a random walk with jump probability $p$ (resp. $q$) to the right (resp. left).  Since $p>q$, any $\gamma$ particle that jumps infinitely many times goes to $+\infty$. Let us denote by $N_i\in\N\cup\{+\infty\}$ the number of jumps performed by the $\gamma$ particle with label $i$, and by $X_i\in\Z\cup\{+\infty\}$ the final location of this particle. Note that $X_i=+\infty$ if $N_i=+\infty$, and $X_i<+\infty$ if $N_i<+\infty$.
In order to prove \eqref{no_more_second}, it is enough to prove that 
\be\label{no_more_second_enough}
X_0=+\infty,\,\mbox{a.s.}
\ee
Indeed, since $\gamma$ particles remain ordered, \eqref{no_more_second_enough} implies that with probabilty one, we have
$X_i=+\infty$ for all $i\in\N$, thus for all $x\in\Z$, $\gamma_t^\alpha(x)$ converges a.s. to $0$ as $t\to +\infty$. This implies convergence in law, and thus \eqref{no_more_second}.
We now prove \eqref{no_more_second_enough}. Let $x\in\Z$.
Since $\eta_t^\alpha\geq\gamma_t^\alpha$, on the event $\{X_i=x\}$,  there is a (random) time $T$ such that
$\eta_t^\alpha(x)\geq 1$ for all $t\geq T$. 
Hence, $X_i=+\infty$ is a consequence of the following result.
\begin{lemma}\label{lemma_empty}
For every $x\in\Z$, 
\be\label{liminf_empty}
\liminf_{t\to+\infty}\eta^\alpha_t(x)=0
\ee
\end{lemma}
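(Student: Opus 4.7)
The plan is to combine three ingredients: the convergence already established in Step one for the truncated process $\zeta_{t}^{\alpha}$, the drift of the excess (second-class) particles $\gamma_{t}^{\alpha}=\eta_{t}^{\alpha}-\zeta_{t}^{\alpha}$, and the ergodicity of the limiting stationary measure. The proof will be: show that only finitely many $\gamma$-particles ever visit $x$, hence after a random time $T$ one has $\eta_{t}^{\alpha}(x)=\zeta_{t}^{\alpha}(x)$; then use Step one together with ergodicity to deduce that $\zeta_{t}^{\alpha}(x)=0$ on a set of times of positive Lebesgue density a.s., and conclude.

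First I would apply Step one (and a translated version thereof, obtained by shifting the cut-off site, which is straightforward since the hydrodynamic limit is insensitive to a bounded shift of the initial condition) to get that $\zeta_{t}^{\alpha}$ converges in distribution to $\mu^{\alpha,\rho\wedge\rho_{c}}$. In the non-degenerate regime, where $\overline{R}^{-1}(\rho\wedge\rho_{c})<\alpha(x)$, the marginal of this limit at $x$ is a genuine $\theta$-measure putting positive mass on $\{0\}$. Combining this with a Cesaro/ergodic argument for the stationary Markov semigroup $P_{t}^{\alpha}$ acting on $\mu^{\alpha,\rho\wedge\rho_{c}}$ then yields the almost sure statement that $\{t:\zeta_{t}^{\alpha}(x)=0\}$ has positive Lebesgue density, hence is unbounded, a.s.

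Second I would analyze the $\gamma$-particles. As recalled just before the lemma, the skeleton of each $\gamma$-particle is a $(p,q)$-random walk with strict drift $p-q>0$, and the initial $\gamma$-configuration is supported on $[0,+\infty)$. By gambler's ruin, a $\gamma$-particle initially at $y\ge 0$ visits $x$ with probability at most $(q/p)^{\max(y-x,0)}$, and each visit lasts only a finite exponential time. To pass from ``each particle individually leaves $x$ eventually'' to ``no $\gamma$-particle is present at $x$ from some random time onwards'', I would truncate $\eta_{0}$ to $\eta_{0}^{M}:=\eta_{0}\mathbf{1}_{[-M,M]}$, for which the sum $\sum_{y>x}\eta_{0}^{M}(y)(q/p)^{y-x}$ is finite and the argument gives a finite random last-visit time $T_{M}$. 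Then I would recover the conclusion for $\eta_{0}$ itself by combining this with the finite propagation property (Lemma \ref{lemma_finite_prop}), which allows one to couple $\eta_{.}^{\alpha}$ and the truncated process on a space-time window growing linearly in $t$ with overwhelming probability.

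Putting the two pieces together yields $\eta_{t}^{\alpha}(x)=\zeta_{t}^{\alpha}(x)$ for all $t$ large, and $\zeta_{t}^{\alpha}(x)=0$ for an unbounded set of times, hence $\liminf_{t\to+\infty}\eta_{t}^{\alpha}(x)=0$ a.s. The main obstacle is the degenerate regime where $\rho\ge\rho_{c}$ and $\alpha(x)=c$: there the limiting marginal of $\mu^{\alpha,\rho_{c}}$ at $x$ is $\delta_{+\infty}$, so the first ingredient breaks down at $x$. I would circumvent this by coupling $\eta_{.}^{\alpha}$, via the basic coupling of Subsection \ref{subsec:Harris}, with an auxiliary stationary process $\sigma_{.}$ started from $\mu^{\alpha,\beta}$ for some $\beta\in(0,c)$, whose marginal at $x$ does put mass on $\{0\}$, and running the discrepancy/drift analysis in that coupling instead; the extra $(\sigma-\eta)$ and $(\eta-\sigma)$ discrepancies to manage are controlled by the same gambler's ruin estimate, the density condition \eqref{condition_eta} on the left providing the necessary matching of densities on the left side.
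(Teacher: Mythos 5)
Your argument runs in the opposite logical direction from the one the statement permits, and its key step is precisely the point at issue. You claim that each $\gamma$-particle that reaches $x$ leaves after ``a finite exponential time'', so that there is a finite last-visit time and $\eta^\alpha_t(x)=\zeta^\alpha_t(x)$ for all large $t$. But in the class coupling a $\gamma$-particle sitting at a site $y$ jumps at rate $\alpha(y)\left[g(\eta_t(y))-g(\zeta_t(y))\right]$, and since $g$ is only nondecreasing and bounded (it may be constant above some finite level, and in any case $g(n+1)-g(n)\to 0$), this rate is not bounded below and can even vanish identically; a $\gamma$-particle may a priori remain stuck at $x$ forever. Ruling out exactly this stuck-discrepancy scenario is what Lemma \ref{lemma_empty} is for: in the paper it is proved first, with no reference to the $\gamma$-particles, and only afterwards used to conclude that no second-class particle has a finite final position. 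Your gambler's-ruin estimate controls only the skeleton, i.e.\ jumps that actually occur; it gives no control of sojourn times, so the ``finite random last-visit time $T_M$'' is unjustified and the whole scheme is circular in substance.

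The other two ingredients also have gaps. From Step one you only obtain convergence in distribution of $\zeta^\alpha_t$; this process is not stationary, so no ergodic theorem applies, and $\Prob(\zeta^\alpha_t(x)=0)\to\mu^{\alpha,\rho\wedge\rho_c}(\eta(x)=0)>0$ cannot be upgraded, as you do, to the almost-sure statement that $\{t:\zeta^\alpha_t(x)=0\}$ has positive Lebesgue density. In the regime $\rho\ge\rho_c$ with $\alpha(x)$ close to (or at) $c$, your proposed fix—coupling with a stationary process at some $\beta\in(0,c)$—cannot work as sketched: the two configurations have different asymptotic densities, hence infinitely many discrepancies of both signs with positive spatial density, and individual skeleton estimates give no control of $\eta_t(x)$; condition \eqref{condition_eta} does not ``match densities'' in any usable way here. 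The paper's proof avoids all of this and is much shorter: couple $\eta^\alpha_\cdot$ with the process started from $\max\left(\eta_0,\xi_0^{\alpha,\rho_c}\right)$, which by \cite[Theorem 2.1]{bmrs1} converges in law to the critical measure $\mu^\alpha_c$; since that measure charges $\{\eta(x)=0\}$, $\Prob(\eta^\alpha_t(x)=0)$ stays bounded away from $0$, which is incompatible with $\eta^\alpha_t(x)\ge 1$ for all large $t$, and \eqref{liminf_empty} follows. Any rescue of your plan would need Lemma \ref{lemma_empty} itself (or an input of the strength of \cite{bmrs1}) before you can control the $\gamma$-particles, not after.
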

\begin{proof}{Lemma}{lemma_empty}
We couple the process $(\eta^\alpha_t)_{t\geq 0}$ to the critical stationary process 
$(\xi^{\alpha,\rho_c}_t)_{t\geq 0}$ and to the process $(\xi^{\alpha}_t)_{t\geq 0}$ 
with generator \eqref{generator} and initial state
$\xi_0:=\max\left(\eta_0,\xi_0^{\alpha,\rho_c}\right)$. Since the mapping 
\eqref{mapping} is nondecreasing, we have 
\be\label{wehave_empty}
\max\left(\eta^{\alpha}_t,\xi^{\alpha,\rho_c}_t\right)\leq\xi^\alpha_t
\ee
for all $t\geq 0$. By \cite[Theorem 2.1]{bmrs1}, $\xi^\alpha_t$ converges in law 
as $t\to+\infty$ to $\mu^{\alpha,\rho_c}$, that is the law of $\xi^{\alpha,\rho_c}_t$. 
We thus have
\begin{eqnarray*}
\liminf_{t\to+\infty}\Prob\left(\eta_t^\alpha(x)=0\right) & \geq &
\liminf_{t\to+\infty}\Prob\left(\xi_t^\alpha(x)=0\right)\\
& = & 
\Prob\left(\xi_t^{\alpha,\rho_c}(x)=0\right)=\mu^{\alpha,\rho_c}(\eta(x)=0)>0
\end{eqnarray*}
The result follows.
We now conclude the proof by contradiction. Assuming \eqref{liminf_empty} does not hold, 
there exists an a.s. finite random time $T$ such that $\eta^\alpha_t(x)>0$ for all $t\geq T$. 
But
\[
\Prob(\eta^\alpha_t=0)=\Prob(\eta^\alpha_t=0,t\geq T)+\Prob(\eta^\alpha_t=0,t< T)
\]
The second probability on the r.h.s. vanishes as $t\to+\infty$ because $T$ is finite, 
while the first one is zero, because the event inside it is impossible by definition of $T$.
\end{proof}
\mbox{}\\ \\
\noindent {\bf Acknowledgements:}
 We thank Gunter Sch\"utz 
for many interesting discussions.
This work was partially supported by laboratoire MAP5,
grants ANR-15-CE40-0020-02 and ANR-14-CE25-0011, 
Simons Foundation Collaboration grant 281207 awarded to K. Ravishankar.
We thank 
Universit\'{e}s Clermont Auvergne and Paris Descartes for hospitality.
This work was partially carried out during C.B.'s 2017-2018 
d\'el\'egation CNRS, whose support is acknowledged.
Part of it was done during the authors' stay at the 
Institut Henri Poincar\'e - Centre Emile Borel for
the trimester ``Stochastic Dynamics Out of Equilibrium'', and during the authors' stay at NYU Shanghai.
The authors thank these institutions for hospitality and support. 
\end{document}